\title{%
	\MakeUppercase{Minimax Rates for Estimating the Dimension of a Manifold}%
}
\author{%
	Jisu Kim,%
	\thanks{\affil{Carnegie Mellon University}, 
		\email{jisuk1@andrew.cmu.edu}, supported by Samsung Scholarship and partially supported by NSF CAREER Grant DMS 1149677.}\,
	Alessandro Rinaldo,%
	\thanks{\affil{Carnegie Mellon University}, 
		\email{arinaldo@cmu.edu}, partially supported by NSF CAREER Grant DMS 1149677.}\,
	and Larry Wasserman%
	\thanks{\affil{Carnegie Mellon University}, 
		\email{larry@stat.cmu.edu}}
}
\numberwithin{equation}{section}
\numberwithin{figure}{section}
  \theoremstyle{plain}
  \newtheorem*{lem*}{\protect\lemmaname}
  \theoremstyle{plain}
  \newtheorem*{prop*}{\protect\propositionname}
  \theoremstyle{plain}
  \newtheorem*{cor*}{\protect\corollaryname}
  \theoremstyle{remark}
  \newtheorem*{claim*}{\protect\claimname}
 \theoremstyle{definition}
 \newtheorem{defn}{\protect\definitionname}
 \theoremstyle{remark}
 \newtheorem{rem}{\protect\remarkname}
\theoremstyle{plain}
\newtheorem{thm}{\protect\theoremname}
  \theoremstyle{plain}
  \newtheorem{lem}[thm]{\protect\lemmaname}
  \theoremstyle{plain}
  \newtheorem{prop}[thm]{\protect\propositionname}
  \theoremstyle{plain}
  \theoremstyle{remark}
  \newtheorem{claim}[thm]{\protect\claimname}
\let\hat\widehat
  \providecommand{\claimname}{Claim}
  \providecommand{\corollaryname}{Corollary}
  \providecommand{\definitionname}{Definition}
  \providecommand{\lemmaname}{Lemma}
  \providecommand{\propositionname}{Proposition}
  \providecommand{\remarkname}{Remark}
\providecommand{\theoremname}{Theorem}
\begin{document}
\bibliographystyle{plainnat}

\maketitle



\begin{abstract}
Many algorithms in machine learning and computational geometry require, as input, the intrinsic dimension of the manifold that supports the probability distribution of the data. This parameter is rarely known and therefore has to be estimated. We characterize the statistical difficulty of this problem by deriving upper and lower bounds on the minimax rate for estimating the dimension. First, we consider the problem of testing the hypothesis that the support of the data-generating probability distribution is a well-behaved manifold of intrinsic dimension $d_1$ versus the alternative that it is of dimension $d_2$, with $d_{1}<d_{2}$. With an i.i.d. sample of size $n$, we provide an upper bound on the probability of choosing the wrong dimension of $O\left( n^{-\left(d_{2}/d_{1}-1-\epsilon\right)n} \right)$, where $\epsilon$ is an arbitrarily small positive number. The proof is based on bounding the length of the traveling salesman path through the data points. We also demonstrate a lower bound of $\Omega \left( n^{-(2d_{2}-2d_{1}+\epsilon)n} \right)$, by applying Le Cam's lemma with a specific set of $d_{1}$-dimensional probability distributions. We then extend these results to get minimax rates for estimating the dimension of well-behaved manifolds. We obtain an upper bound of order $O \left( n^{-(\frac{1}{m-1}-\epsilon)n} \right)$ and a lower bound of order $\Omega \left( n^{-(2+\epsilon)n} \right)$, where $m$ is the embedding dimension.
\end{abstract}

\section{Introduction}

Suppose that
$X_1,\ldots, X_n$ is an i.i.d. sample from a distribution $P$
whose support  is an unknown, well-behaved, manifold $M$
of dimension $d$ in $\mathbb{R}^m$, where $1\leq d\leq m$.
Manifold learning refers broadly to a suite of techniques from statistics and
machine learning
aimed at estimating $M$ or some of its features 
based on the data.

Manifold learning procedures are widely used in high dimensional data analysis,
mainly to alleviate the curse of dimensionality.
Such algorithms map the data to a new, lower dimensional coordinate
system
\citep{Bellman1961,LeeV2007.ch1,HastieTF2009.ch14}, with little
loss in accuracy.
Manifold learning can greatly reduce the dimensionality of the data. 

Most manifold learning techniques require, as input, the intrinsic dimension of
the manifold. However, this quantity is almost never known in advance and
therefore has to be estimated from the data. 


Various intrinsic dimension estimators have been proposed and analyzed; 
\citep[see, e.g.,][]{LeeV2007.ch3,Koltchinskii2000,Kegl2003,LevinaB2004,HeinA2005,
RaginskyL2005,LittleJM2009,LittleMR2011,SricharanRH2010,
RozzaLCCC2012,CamastraS2015}. However,
characterizing the intrinsic statistical hardness of estimating the dimension remains an open problem.

The traditional way of measuring the difficulty of a statistical problem
is to bound its {\it minimax risk,} which in the present setting is loosely described as the worst possible statistical performance of an optimal dimension estimator. Formally, given a class of probability distribution
$\mathcal{P}$, the minimax risk $R_n = R_n(\mathcal{P})$ is defined as
\begin{equation}
\label{eq:intro.minimax}
R_n = \inf_{\hat{d}_{n}}\sup_{P\in \mathcal{P}} \mathbb{E}_P\left[ 1(\hat{d}_{n} \neq d(P))\right].
\end{equation}
In Equation \eqref{eq:intro.minimax}, $d(P)$ is the dimension of the support of $P$, $\mathbb{E}_P$ denotes the
expectation with respect to the distribution $P$,
$1(\cdot)$ is the indicator function, and the infimum is over all estimators (measurable functions of the
data) 
$\hat{d}_{n} = \hat{d}_{n}(X_1,\ldots,X_n)$ of the dimension $d(P)$. 
The risk $\mathbb{E}_P[ 1(\hat{d}_{n} \neq d(P))]$ of a dimension estimator $\hat{d}_{n}$ is
the probability that $\hat{d}_{n}$ differs from the true dimension $d(P)$ of the
support of the data generating distribution $P$.  The
minimax risk $R_{n}(\mathcal{P})$, which is a function of both the sample size $n$
and the class $\mathcal{P}$, quantifies the intrinsic hardness of the dimension
estimation problem, in the sense that {\it any dimension estimator} cannot have
a risk smaller than $R_n$ uniformly over every $P \in \mathcal{P}$.

The purpose of this paper is to obtain upper and lower bounds on the
minimax risk $R_{n}$ in \eqref{eq:intro.minimax}. We impose several
regularity conditions on the set of manifolds supporting the
distribution in the class $\mathcal{P}$, in order to make the problem
analytically tractable and also to avoid pathological cases, such as
space-filling manifolds. We first assume that the manifold supporting
the data generating distribution $P$ has two possible dimensions,
$d_{1}$ and $d_{2}$. This assumption is then relaxed to any dimension
$d(P)$ between $1$ and the embedding dimension $m$. Our main result is
the following theorem. See Section \ref{sec:definition.regular} for
the definition of the class $\mathcal{P}$ of probability distributions
supported on well-behaved manifolds in $\mathbb{R}^m$.

\begin{thm}
\label{thm:intro.bound}
The minimax risk $R_n$ in \eqref{eq:intro.minimax} satisfies{\color{blue},}
$a_n \leq R_n \leq b_n$, where
\begin{align}
a_n &= (C_{K_{I}}^{(\ref*{prop:multidimension.lower})})^n \min\{ \tau_{\ell}^{-4} n^{-2},1\}^n,\label{eq:intro.lower.bound}\\
b_n &= (C_{K_{I},K_{p},K_{v},m}^{(\ref*{prop:multidimension.maximumrisk})})^n \max\left\{1, \tau_{g}^{-(m^2 - m)n}\right\} n^{-\frac{n}{m-1}},\label{eq:general.upper.bound}
\end{align}
and the constants
$\tau_{\ell}$,
$\tau_{g}$,
$C_{K_{I}}^{(\ref*{prop:multidimension.lower})}$ and
$C_{K_{I},K_{p},K_{v},m}^{(\ref*{prop:multidimension.maximumrisk})}$ depend on 
$\mathcal{P}$ and are defined in Section \ref{sec:multidimension}.
\end{thm}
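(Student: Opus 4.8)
Theorem~\ref{thm:intro.bound} will be obtained by combining two results established in Section~\ref{sec:multidimension}: the upper bound $R_n\le b_n$ is Proposition~\ref{prop:multidimension.upper} and the lower bound $a_n\le R_n$ is Proposition~\ref{prop:multidimension.lower}. Each of these is a reduction of the corresponding ``two-dimension'' (testing) bound to the general estimation problem, so I describe the route through the testing results.

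\emph{Upper bound.} The plan is to construct a single estimator $\hat d(X_1,\dots,X_n)$, bound the probability that it returns the wrong value, and take the supremum over $\mathcal P$. The building block is the length $L_n$ of a shortest travelling-salesman tour through $X_1,\dots,X_n$: on a well-behaved $d$-dimensional support $L_n$ is of order $n^{1-1/d}$, so the value of $L_n$ places $d$ in a band depending on $n$, and $\hat d$ is defined to be the dimension whose band contains $L_n$. A union bound over the at most $\binom{m}{2}$ dimension pairs bounds the risk of $\hat d$ by the largest pairwise testing error, which is attained at the consecutive pair $d_1=m-1$, $d_2=m$ and has exponent $d_2/d_1-1=1/(m-1)$; this yields the $n^{-n/(m-1)}$ factor in $b_n$. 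The substantive ingredient is a large-deviation estimate for $L_n$: the dominant contribution is the event that a sample from a $d_2$-dimensional distribution has an atypically short tour, $P_{d_2}\!\left(L_n\lesssim n^{1-1/d_1}\right)$, which forces an abnormal clustering of the points (a linear fraction of them lying inside balls of atypically small radius) and is therefore of order $n^{-(d_2/d_1-1-\epsilon)n}$. Proving this lower-tail bound for curved, variable-density supports, with constants depending only on $K_I,K_p,K_v$ and $m$ and with the chart-to-flat metric distortion absorbed into the $\tau_g$-dependent factor of $b_n$, is the technical heart and, I expect, the main obstacle of the upper half.

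\emph{Lower bound.} Since estimating the dimension is at least as hard as testing between two admissible sub-families of $\mathcal P$, it suffices to lower bound the testing risk, and the plan is Le Cam's method. This cannot be done with a single pair $P_1$ (dimension $d_1$) and $P_2$ (dimension $d_2$): such measures are mutually singular, so the two-point bound is vacuous. Instead one fixes a single $d_2$-dimensional distribution $Q$ --- supported on a thin full-dimensional neighbourhood of a fixed $d_1$-plane, with reach lowered to $\tau_\ell$ and density in the admissible range --- and chooses a prior $\pi$ over a large family $\{P_\theta\}$ of $d_1$-dimensional distributions, each supported on a reach-$\tau_\ell$ $d_1$-manifold threading that neighbourhood, whose parameter space has dimension growing linearly in $n$ (roughly $(d_2-d_1)n$, one ``placement'' per sample point), so that the mixture of products $\int P_\theta^{\otimes n}\,d\pi(\theta)$ is close in total variation to $Q^{\otimes n}$. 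Le Cam's lemma then gives
\[
  R_n \;\ge\; \tfrac12\left(1-\left\|\,Q^{\otimes n}-\int P_\theta^{\otimes n}\,d\pi(\theta)\,\right\|_{\mathrm{TV}}\right),
\]
and the construction is tuned so that the right-hand side is $a_n$, up to the constant $C_{K_I}$ of Proposition~\ref{prop:multidimension.lower}. The crux --- and the main obstacle of the lower half --- is the choice of $\pi$ and the resulting total-variation bound: one must quantify the extent to which a size-$n$ sample from $Q$ can be ``interpolated'' by a reach-$\tau_\ell$ $d_1$-manifold with the conditional density matching $Q$, and the combinatorics of this interpolation (how much prior mass sits on $d_1$-manifolds compatible with a given $n$-point configuration) is what produces the $n^{-2(d_2-d_1)n}$ behaviour and, once the reach dependence is tracked, the factor $\tau_\ell^n\min\{\tau_\ell^{-3}n^{-2},1\}^n$ in $a_n$.
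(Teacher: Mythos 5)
Your top-level decomposition is exactly right: the theorem is immediate from Propositions~\ref{prop:multidimension.upper} and \ref{prop:multidimension.lower}, and those are in turn obtained from the two-dimension testing bounds of Sections~\ref{sec:upper}--\ref{sec:lower}. However, the route you sketch to the propositions differs from the paper's in two places that matter technically.

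For the upper bound, you build the estimator from the \emph{raw} TSP length $L_n$ and an $n$-dependent ``band'' $[\,\cdot\,, C_d\, n^{1-1/d}]$, and then you need a lower-tail large-deviation bound for $L_n$ under a $d_2$-dimensional law. The paper instead works with the $d$-powered quantity $\min_{\sigma}\sum_i \|X_{\sigma(i+1)}-X_{\sigma(i)}\|^d$, for which Lemma~\ref{lem:upper.lowerdim} gives a \emph{deterministic, $n$-free} threshold $C^{(\ref*{lem:upper.lowerdim})}_{K_I,K_v,d,m}(1+\tau_g^{d-m})$, so $\hat d$ is simply the smallest $d\le m$ below threshold, and can never overestimate. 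The error event is then $\sum_i\|X_{\sigma(i+1)}-X_{\sigma(i)}\|^{k}\le L$ for some $k<d(P)$, and Lemma~\ref{lem:upper.higherdim} bounds this not via a clustering argument but by a clean sequential conditioning: Claim~\ref{claim:proof.upper.conditionalcdf} bounds the conditional cdf of each term $\|X_{i+1}-X_i\|^{k}$ given the past, and the product of $n-1$ integrals gives the $(n-1)^{-(d_2/d_1-1)(n-1)}(n-1)!^{-1}$ factor. Your band approach would need to track how the $n$-dependent thresholds interact with the class constants, and your heuristic ``linear fraction of points in small balls'' is not the mechanism the paper uses; the conditional-cdf recursion is the crux and is what delivers the explicit constants and $\tau_g$-dependence. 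Also, the union bound in the paper is over the $d(P)-1$ possible underestimates $k<d(P)$, not over all $\binom m2$ pairs, precisely because overestimation is ruled out deterministically.

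For the lower bound, the broad plan (Le Cam with a mixture against a single $d_2$-dimensional alternative, parameter space of dimension $(d_2-d_1)n$, rate reduction to $d_1=1,\ d_2=2$) matches. But your alternative $Q$ is ``uniform on a thin slab around a $d_1$-plane,'' while the paper takes $Q_2$ to be uniform on the \emph{entire} cube $[-K_I,K_I]^{d_2}$; the $d_1$-dimensional manifolds are then zigzag, space-filling curves (Lemma~\ref{lem:lower.constructT}) threading the full cube, not curves in a slab. And the paper does not bound the full total-variation distance $\|Q_1 - Q_2^{(n)}\|_{\mathrm{TV}}$; it lower bounds $\int_T q_1\wedge q_2$ over a specific product-of-cylinders event $T = S_n\prod_i T_i$, using Claim~\ref{claim:lower.probability.bound} to show $q_1\ge c\,q_2$ on $T$ and then computing $\lambda_{J^n}(T)$ directly. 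Your TV reformulation is logically equivalent in principle, but the paper never needs a global TV estimate --- only the mass of $T$ --- which is what makes the computation tractable and yields the $n^{-2(d_2-d_1)n}$ and $\tau_\ell$-dependence you correctly anticipate.
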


We now make a few remarks about the previous theorem.
\begin{itemize}
\item 
Since the dimension $d(P)$ is a discrete quantity, the minimax rate $R_{n}$ in
\eqref{eq:intro.minimax} is superexponential in sample size. This result seems
at odds with the exponential rate obtained by \citep[Proposition
2.1]{Koltchinskii2000}. These different rates are due to different model
assumptions. In \citep{Koltchinskii2000} the data generating
distribution is the convolution of a probability distribution supported on a manifold with a
noise distribution supported on a set of full dimension $m$. In contrast, here
we assume that the data are generated from a probability distribution supported
on a manifold. Under our noiseless model, distributions supported on
manifolds with different dimension are more easily distinguishable, hence the minimax
rate $R_{n}$ converges to $0$ faster than under the model with noise assumed by \citep{Koltchinskii2000}. 

\item The key quantities that appear in the lower bound
\eqref{eq:intro.lower.bound} and the upper bound
\eqref{eq:general.upper.bound} are the global reach $\tau_{g}$ and
the local reach $\tau_{\ell}$ of the manifold, which are defined in
Section \ref{sec:definition.regular}.  These reach parameters can be
roughly thought as the inverse of the usual notion of curvature
\citep[see, e.g.][]{Federer1959}, and they affect the performance of
any dimension estimator: a manifold with low reach may appear more
space-filling than a manifold of the same dimension but with higher
reach, thus making the task of resolving the dimension harder.
Indeed, our analysis shows formally that the minimax risk $R_n$ in
\eqref{eq:intro.minimax} decreases in the values of the
reaches. Given their crucial role, we have attempted to make the
dependence of the minimax risk $R_n$ on both $\tau_{g}$ and
$\tau_{\ell}$ as explicit as possible.

\item There is a gap between the lower bound
\eqref{eq:intro.lower.bound} and the upper bound
\eqref{eq:general.upper.bound}.
Nonetheless, as far as we are aware, these are the most precise
bounds on $R_n$ that are available.
\end{itemize}

This paper is organized as follows. In Section \ref{sec:definition.regular}, we
formulate and discuss regularity
conditions on distributions and their supporting manifolds. In Section
\ref{sec:upper}, we provide an upper bound on the minimax rate
by considering the traveling salesman path through the points. In Section
\ref{sec:lower}, we derive a lower bound on the
minimax rate by applying Le Cam's lemma with a specific set of
$d_{1}$-dimensional and $d_{2}$-dimensional probability distributions. In Section
\ref{sec:multidimension}, we extend our upper bound and lower bound for the case where the intrinsic dimension varies from $1$ to $m$.

\section{Definitions and Regularity Conditions}
\label{sec:definition.regular}

In this section, we define the set $\mathcal{P}$ of probability distributions
that we consider in bounding the minimax risk $R_{n}$ in \eqref{eq:intro.minimax}.
Such distributions are supported on manifolds whose dimension $d$ is between
$1$ and $m$, where $m$ is the dimension of the embedding
space.
In particular, we require that the supporting manifolds have a uniform lower
bound on their reach parameters $\tau_g$ and $\tau_l$.
The resulting class of distributions is denoted by
\begin{equation}
\label{eq:definition.distribution}
\mathcal{P} = 
\bigcup_{d=1}^m
\mathcal{P}_{\tau_{g},\tau_{\ell},K_{I},K_{v},K_{p}}^{d}.
\end{equation}
In the rest of this section, we will  make the definition
$\mathcal{P}_{\tau_{g},\tau_{\ell},K_{I},K_{v},K_{p}}^{d}$ precise. Readers who
are not interested in the details may skip the rest of the section.
All the proofs for this section are in Section \ref{sec:proof.definition.regular}.

\subsection{Notation and Basic Definitions}

For the reader's convenience, we provide a list of the notation used throughout
the paper in Table \ref{tab:definition.notation}.

\begin{table}[!ht]
	\begin{center}
	\begin{tabular}{c|l}
		\hline 
		\textbf{Notation} & \textbf{Definition}\tabularnewline
		\hline 
		$1(\cdot)$  & indicator function.\tabularnewline
		$d$, $d_{1}$, $d_{2}$  & dimension of a manifold.\tabularnewline
		$\hat{d}_{n}$ & dimension estimator.\tabularnewline
		$dist_{A}(\cdot,\cdot)$  & distance function on the set $A$.\tabularnewline
		$dist_{A,||\cdot||}(\cdot,\cdot)$  & distance function on the set $A$ induced by the norm $||\cdot||$.\tabularnewline
		$\exp_{p}(\cdot)$ & exponential map on point $p\in M$.\tabularnewline
		$\ell(\cdot,\cdot)$ & loss function.\tabularnewline
		$n$  & size of the sample.\tabularnewline
		$m$  & dimension of the embedding space.\tabularnewline
		$p$, $q$ & points on the manifold $M$.\tabularnewline
		$vol_{A}(\cdot)$  & volume function of $A$.\tabularnewline
		$B_{A}(x,r)$  & open ball with center $x$ and radius $r$, $\{y\in A:\,dist_{A}(y,x)<r\}$. \tabularnewline
		$C_{a_{1},\ldots,a_{k}}$ & constant that depends only on $a_{1},\,\ldots,\,a_{k}$. \tabularnewline
		$I$ & cube $[-K_{I},\,K_{I}]^{m}$.\tabularnewline
		$K_{I}$, $K_{v}$, $K_{p}$ & fixed constants for regular conditions; see Definition \ref{def:regular.manifold}.\tabularnewline
		$M$  & manifold.\tabularnewline
		$P$  & data generating probability distribution.\tabularnewline
		$R_{n}$  & minimax risk $\underset{\hat{d}_{n}}{\inf}\underset{P\in\mathcal{P}}{\sup}\mathbb{E}_{P}\left[1\left(\hat{d}_{n}\neq d(P)\right)\right]$;
		see \eqref{eq:intro.minimax}, \eqref{eq:minimax.minimax}, and \eqref{eq:regular.minimax}.\tabularnewline
		$S_{n}$  & permutation group on $\{1,\,\ldots,\,n\}$.\tabularnewline
		$T$ & subset of $I^{n}\subset(\mathbb{R}^{d})^{n}$, used in Section \ref{sec:lower}.\tabularnewline
		$T_{p}M$  & tangent space of a manifold $M$ at $p$.\tabularnewline
		$X_{1},\,\ldots,\,X_{n}$  & sample points.\tabularnewline
		$\mathcal{M}$ & set of manifolds; see Definition \ref{def:regular.manifold}.\tabularnewline
		$\mathcal{P}$ & set of distributions; see Definition \ref{def:regular.manifold}.\tabularnewline
		$\gamma$ & path on a manifold $M$.\tabularnewline
		$\pi_{A}(\cdot)$ & projection function onto a closed set $A$.\tabularnewline
		$\sigma$ & permutation.\tabularnewline
		$\tau(M)$ & reach of a manifold $M$; see Definition \ref{def:definition.reach} and Lemma~\ref{lem:definition.reachball}.\tabularnewline
		$\tau_{g}$ & lower bound for global reach; see Definition \ref{def:regular.manifold}.\tabularnewline
		$\tau_{\ell}$ & lower bound for local reach; see Definition \ref{def:regular.manifold}.\tabularnewline
		$\omega_{d}$  & volume of the unit ball in $\mathbb{R}^{d}$, $\frac{\pi^{\frac{d}{2}}}{\Gamma\left(\frac{d}{2}+1\right)}$.\tabularnewline
		$\Pi_{n_{1}:n_{2}}$  & coordinate projection map: $\Pi_{n_{1}:n_{2}}(x_{1},\,\ldots,\,x_{d})=(x_{n_{1}},\,\ldots,\,x_{n_{d}})$.\tabularnewline
		\hline 
	\end{tabular}
	\end{center}
	\caption{Table of notations and definitions.}
	\label{tab:definition.notation} 
\end{table}

We now briefly review some notations from differential geometry. For a more detailed treatment, we
refer the reader to standard textbooks on this topic \citep[see,
e.g.,][]{Lee2000, Lee2003, Petersen2006, doCarmo1992}.
A topological manifold of dimension $d$ is a topological space $M$ and a family of
homeomorphisms $\varphi_{\alpha}:U_{\alpha}\subset\mathbb{R}^{d}\rightarrow
V_{\alpha}\subset M$ from an open subset of $\mathbb{R}^{d}$ to an open
subset of $M$ such that $\underset{\alpha}{\bigcup}\varphi_{\alpha}(U_{\alpha})=M$. A topological space $M$ is considered to be a $d$-dimensional manifold if there exists a family of homeomorphisms $\varphi_{\alpha}:U_{\alpha}\subset\mathbb{R}^{d}\rightarrow
V_{\alpha}\subset M$ such that $(M,\{\varphi_\alpha\}_{\alpha})$ is a manifold. If $M$ is a $d$-dimensional manifold,
such $d$ is unique and is called the dimension of a manifold. 
If, for any pair $\alpha$, $\beta$, with
$\varphi_{\alpha}(U_{\alpha})\cap\varphi_{\beta}(U_{\beta})\neq\emptyset$, 
$\varphi_{\beta}^{-1}\circ\varphi_{\alpha}:U_{\alpha}\cap U_{\beta}\rightarrow U_{\alpha}\cap
U_{\beta}$ is $C^{k}$, then $M$ is a
$C^{k}$-manifold. 

We assume that the topological manifold $M$ is embedded in
$\mathbb{R}^{m}$, i.e. $M\subset \mathbb{R}^{m}$, and the metric is
inherited from the metric of $\mathbb{R}^{m}$. For a topological
manifold $M\subset \mathbb{R}^{m}$ and for any $p,\,q\in M$, a path
joining $p$ to $q$ is a map $\gamma:[a,b]\rightarrow M$ for
some $a,b\in\mathbb{R}$ such that $\gamma(a)=p$, $\gamma(b)=q$. The length
of the curve $\gamma$ is defined as $Length(\gamma)=\int_{a}^{b}
||\gamma'(t)||_{2} dt$. A topological manifold $M$ is equipped with
the distance $dist_{M}:M\times M\rightarrow \mathbb{R}$ as
$dist_{M}(p,q)=\underset{\gamma:\text{ path joining }p\text{ and
  }q}{\inf}Length(\gamma)$. A path $\gamma:[a,b]\rightarrow M$
is a geodesic if for all $t,t'\in[a,b]$,
$dist_{M}(\gamma(t),\gamma(t')) = |t-t'|$.

Let $T_p M$ denote the tangent space to $M$ at $p$.
Given $p\in M$, there exist a set $0\in \mathcal{E}\subset T_{p}(M)$
and a mapping $\exp_{p}:\mathcal{E}\subset T_{p}M \rightarrow M$ such
that $t \rightarrow \exp_{p}(tv)$, $t\in(-1,1)$, is the unique
geodesic of $M$ which, at $t=0$, passes through $p$ with
velocity $v$, for all $v \in \mathcal{E}$. The map
$\exp_{p}:\mathcal{E}\subset T_{p}M \rightarrow M$ is called the exponential map on $p$. 

One of the key conditions that we impose in Section \ref{subsec:regular} is
about the reach.

\begin{defn}
\label{def:definition.reach}
For a compact $d$-dimensional topological manifold $M\subset \mathbb{R}^{m}$ (with
boundary), the {\em reach} of $M$, $\tau(M)$, is defined as the
largest value of  $r>0$ such that each $x\in\mathbb{R}^{m}$ with
$dist_{\mathbb{R}^{m}}(x,M)<r$ has  a unique projection $\pi_{M}(x)$ on
$M$, i.e.
\begin{align}
\label{eq:definition.reach}
\tau(M):=\sup\Bigl\{ r:\, & \forall x\in\mathbb{R}^{m}\text{ with }dist_{\mathbb{R}^{m}}(x,\,M)<r, \nonumber\\
	& \exists!\pi_{M}(x)\in M\text{ s.t. }||x-\pi_{M}(x)||_{2}=\underset{y\in M}{\inf}||x-y||_{2}\Bigr\}.
\end{align}
\end{defn}

See \citep{Federer1959} for further details.
The reach $\tau(M)$ can be also considered as one kind of curvature, and can be understood as an inverse of other usual curvatures. See Figure \ref{fig:definition.reach}\subref{subfig:definition.reach.projection} for the illustration of Definition \ref{def:definition.reach}. There are several equivalent ways to define the reach $\tau(M)$ in
\eqref{eq:definition.reach} for the manifold $M$. 
The reach $\tau(M)$ is the maximum radius of a ball that 
can be rolled freely over the manifold $M$, as in Lemma
\ref{lem:definition.reachball}. See Figure \ref{fig:definition.reach}\subref{subfig:definition.reach.ball} for the illustration of Lemma~\ref{lem:definition.reachball}.

\begin{lem}
	\label{lem:definition.reachball}
	For a manifold $M\subset \mathbb{R}^{m}$, 
	\begin{align}
	\label{eq:definition.reachball}
	\tau(M)=\sup\bigl\{ r:\, & \forall x\in M,\,\forall y\in \mathbb{R}^{m}\text{ with }y-x\perp T_{x}M\text{ and }||y-x||_{2}=r,\nonumber \\
	& B_{\mathbb{R}^{m}}(y,\,r)\cap M=\emptyset\bigr\}.
	\end{align}
\end{lem}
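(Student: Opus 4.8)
\emph{Proof plan.}
Write $\rho(M)$ for the right-hand side of \eqref{eq:definition.reachball}, so the goal is $\tau(M)=\rho(M)$; I would prove $\rho(M)\le\tau(M)$ and $\tau(M)\le\rho(M)$ separately. First I record an elementary nesting fact: if $x\in M$, if $\nu$ is a unit vector orthogonal to $T_xM$, and if $0<s<r$, then $B_{\mathbb{R}^m}(x+s\nu,\,s)\subseteq B_{\mathbb{R}^m}(x+r\nu,\,r)$, since any $w$ with $\|w-(x+s\nu)\|_2<s$ satisfies $\|w-(x+r\nu)\|_2\le\|w-(x+s\nu)\|_2+\|(x+s\nu)-(x+r\nu)\|_2<s+(r-s)=r$. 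Consequently, if the defining condition of \eqref{eq:definition.reachball} holds at some radius, it holds at every smaller radius, so the admissible radii form an interval with supremum $\rho(M)$; it therefore suffices to compare radii.

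\emph{Step 1: $\rho(M)\le\tau(M)$.} Suppose not and fix $r$ with $\tau(M)<r<\rho(M)$, so the condition in \eqref{eq:definition.reachball} holds at radius $r$. Because $r>\tau(M)$, Definition \ref{def:definition.reach} yields a point $w\in\mathbb{R}^m$ with $\delta:=dist_{\mathbb{R}^m}(w,M)<r$ admitting two distinct nearest points $x_1\ne x_2$ in $M$. Taking $x_1$ to lie in the interior of $M$ (at a boundary point one uses the one-sided optimality condition instead, or one restricts to the interior), the first-order condition for $x_1$ to minimize $\|w-\cdot\|_2$ over $M$ gives $w-x_1\perp T_{x_1}M$; set $\nu_1=(w-x_1)/\delta$ and $y_1=x_1+r\nu_1$. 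Then \eqref{eq:definition.reachball} at radius $r$ forces $B_{\mathbb{R}^m}(y_1,r)\cap M=\emptyset$. But $w$ lies on the segment from $x_1$ to $y_1$ with $\|y_1-w\|_2=r-\delta$, so $\|y_1-x_2\|_2\le\|y_1-w\|_2+\|w-x_2\|_2=(r-\delta)+\delta=r$, with equality only if $x_2=x_1$; hence $\|y_1-x_2\|_2<r$, i.e.\ $x_2\in B_{\mathbb{R}^m}(y_1,r)\cap M$, a contradiction.

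\emph{Step 2: $\tau(M)\le\rho(M)$.} Fix $r<\tau(M)$, a point $x\in M$, and a unit vector $\nu\perp T_xM$; I must show $B_{\mathbb{R}^m}(x+r\nu,r)\cap M=\emptyset$, equivalently $dist_{\mathbb{R}^m}(x+r\nu,M)\ge r$. Since $dist_{\mathbb{R}^m}(x+r\nu,M)\le r<\tau(M)$, the projection $\pi_M(x+r\nu)$ is well defined, and it is enough to prove $\pi_M(x+r\nu)=x$, for then $dist_{\mathbb{R}^m}(x+r\nu,M)=\|x+r\nu-x\|_2=r$. I would derive this from the ``normal ray'' property of sets of positive reach, namely $\pi_M(x+t\nu)=x$ for all $t\in[0,\tau(M))$, by a connectedness argument on the interval $[0,\tau(M))$: the map $t\mapsto\pi_M(x+t\nu)$ is continuous there (the metric projection is continuous, indeed locally Lipschitz, inside the tube of radius $\tau(M)$; see \citep{Federer1959}); the set $A=\{t\in[0,\tau(M)):\pi_M(x+t\nu)=x\}$ is downward closed by the nesting fact above, contains a neighbourhood of $0$ (near $x$ the manifold is a $C^1$ graph over $T_xM$ with vanishing gradient at $x$), is relatively closed by continuity, and is relatively open; hence $A=[0,\tau(M))\ni r$.

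\emph{Main obstacle.} Step 1 is entirely elementary; the real work is the normal ray property behind Step 2, specifically the ``relatively open'' step. This is the one place where the reach must be used as a \emph{quantitative} curvature bound (a lower bound on $\tau(M)$ bounds the second fundamental form of $M$ by $1/\tau(M)$, which keeps the nearest point of $x+t\nu$ pinned at $x$ under perturbations of $t$) rather than merely as the qualitative ``unique nearest point'' statement of Definition \ref{def:definition.reach}; absent such an input the nearest point of $x+t\nu$ could a priori leave $x$ for arbitrarily small $t$. In a full write-up I would either invoke \citep{Federer1959} for this property directly, or first establish the equivalent global formula $\tau(M)=\inf\{\|p-q\|_2^2/(2\operatorname{dist}(q-p,\,T_pM)):p,q\in M,\ p\ne q\}$ (again following \citep{Federer1959}), where $\operatorname{dist}(q-p,\,T_pM)$ denotes the distance from the vector $q-p$ to the linear subspace $T_pM$, and then note that expanding $\|q-(p+r\nu)\|_2^2\ge r^2$ and maximizing over unit $\nu\perp T_pM$ rewrites the condition in \eqref{eq:definition.reachball} at radius $r$ as precisely ``$r\le\|p-q\|_2^2/(2\operatorname{dist}(q-p,T_pM))$ for all $p\ne q$'', so that $\rho(M)$ equals the same infimum.
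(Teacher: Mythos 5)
The paper's proof of this lemma is a single line, ``See Federer (1959), Theorem 4.18,'' so your sketch — which ultimately defers the one hard quantitative ingredient to Federer and fills in the elementary direction $\rho(M)\le\tau(M)$ — is broadly aligned with what the paper does. Your reading of ``$\forall y\in M$'' as ``$\forall y\in\mathbb{R}^{m}$'' must be what is intended: with the literal quantifier, $y$ itself lies in $B(y,r)\cap M$ whenever such a $y$ exists, which degenerates the supremum. Your Step 2 outline is also sound; you correctly isolate the relatively-open step as the place where the qualitative ``unique nearest point'' property is not enough and Federer's quantitative inequality $\langle q-p,\nu\rangle\le\|q-p\|^{2}/(2\tau)$ (equivalently, the second-fundamental-form bound) is required.

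The genuine gap, which you flag but do not close, is the boundary case in Step 1, and it is not merely cosmetic. If the witness $w$ has a nearest point $x_{1}\in\partial M$, first-order optimality only places $w-x_{1}$ in the normal cone to $M$ at $x_{1}$: it is orthogonal to $T_{x_1}(\partial M)$ but may have a nonzero component along $T_{x_1}M$. Then $\nu_{1}=(w-x_{1})/\delta$ is not an admissible direction in \eqref{eq:definition.reachball}, the $\rho$-condition is silent, and your parenthetical ``restrict to the interior'' is unavailable because the nearest points are dictated by $w$. In fact the identity $\tau(M)=\rho(M)$ with the full tangent space $T_{x}M$ is false for manifolds with boundary, which Definition~\ref{def:definition.reach} explicitly allows. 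Take $M=\{(\cos\theta,\sin\theta):\theta\in[\epsilon,2\pi-\epsilon]\}$ for small $\epsilon>0$: the point $(\cos\epsilon,0)$ is equidistant, at distance $\sin\epsilon$, from the two endpoints, so $\tau(M)\le\sin\epsilon<1$; yet every unit $\nu\perp T_{x}M$ is radial even at the endpoints (since $T_xM$ is the full tangent line), so the inward tangent ball of radius $r<1$ stays inside the unit disk and misses the arc, giving $\rho(M)=1$. Federer's Theorem~4.18 avoids this by using the tangent \emph{cone} $\mathrm{Tan}(M,x)$, which at a boundary point is only a half-line/half-space of $T_{x}M$; the two coincide exactly when $M$ is boundaryless. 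A complete proof of the statement needs either a restriction to closed manifolds or a replacement of $T_{x}M$ by $\mathrm{Tan}(M,x)$ at boundary points.
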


\begin{proof}[Proof of Lemma~\ref{lem:definition.reachball}]
	\citep[See][Theorem 4.18]{Federer1959}.
\end{proof}

\begin{figure}
	\begin{center}
		\begin{subfigure}[b]{0.4\textwidth}
			\begin{center}
			\includegraphics{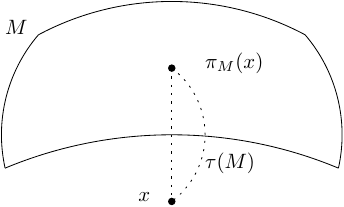}
			\end{center}
			\caption{reach $\tau(M)$ in Definition \ref{def:definition.reach}.}
			\label{subfig:definition.reach.projection}
		\end{subfigure}
		\begin{subfigure}[b]{0.4\textwidth}
			\begin{center}
			\includegraphics{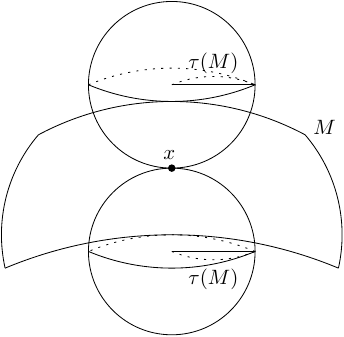}
			\end{center}
			\caption{reach $\tau(M)$ in Lemma \ref{lem:definition.reachball}.}
			\label{subfig:definition.reach.ball}
		\end{subfigure}
	\end{center}
	\caption{For a manifold $M$, there are several equivalent definitions for reach $\tau(M)$ in Definition \ref{def:definition.reach}. \protect\subref{subfig:definition.reach.projection} The reach $\tau(M)$ is the supremum value of $r$ such that for all $x\in\mathbb{R}^{m}$ with $dist_{\mathbb{R}^{m}}(x,M)<r$ has unique projection $\pi_{M}(x)$ to $M$, as in \eqref{eq:definition.reach}. \protect\subref{subfig:definition.reach.ball} The reach $\tau(M)$ is the maximum radius of a ball that you can roll over the manifold $M$, as in \eqref{eq:definition.reachball}.}
	\label{fig:definition.reach}
\end{figure}

\subsection{Minimax Theory}
\label{subsec:minimax}

The minimax rate is the risk of an estimator that performs best in the
worst case, as a function of the sample size \citep[see, e.g.][]{Tsybakov2008}. 
Let $\mathcal{P}$ be a collection of
probability distributions over the same sample space $\mathcal{X}$ and
let $\theta:\mathcal{P} \rightarrow \Theta$ be a function over
$\mathcal{P}$ taking values in some space $\Theta$, the parameter
space. We can think of $\theta(P)$ as the feature of interest of the
probability distribution $P$, such as its mean, or, as in our case,
the dimension of its support.  For the fixed sample size $n$, suppose $X =
(X_{1},\ldots,X_{n})$ is an i.i.d. 
(independent and identically distributed)
sample drawn from a fixed probability
distribution $P \in \mathcal{P}$. Thus $X$ takes values in the
$n$-fold product space $\mathcal{X}^n = \mathcal{X} \times \cdots
\times \mathcal{X}$ and is distributed as $P^{(n)}$, the $n$-fold
product measure. An estimator
$\hat{\theta}_{n}:\mathbb{R}^{n}\rightarrow\Theta$ is any measurable
function that maps the observation $X$ into the parameter space
$\Theta$. Let $\ell:\Theta\times\Theta\rightarrow\mathbb{R}$ be a loss
function, a non-negative bounded function that
measures how different two parameters are. Then for a fixed estimator
$\hat{\theta}_n$ and a fixed distribution $P$, the risk of
$\hat{\theta}_n$ is defined as
\[
\mathbb{E}_{P^{(n)}}\left[\ell\left(\hat{\theta}_{n}(X),\theta(P)\right)\right].
\]
Then for a fixed estimator $\hat{\theta}_n$, its maximum risk is the supremum of
its risk over every distribution $P\in\mathcal{P}$, that is, 
\begin{equation}
\label{eq:minimax.maximumrisk}
\underset{P\in
  \mathcal{P}}{\sup}\mathbb{E}_{P^{(n)}}\left[\ell\left(\hat{\theta}_{n}(X),\theta(P)\right)\right].
\end{equation}
 The minimax risk associated with $\mathcal{P}$, $\theta$, $\ell$ and $n$ is the
 maximal risk of any estimator that performs the best under the worst possible
 choice of $P$. Formally, the \emph{minimax risk} is
\begin{equation}
\label{eq:minimax.minimax}
R_{n} = \inf_{\hat{\theta}_{n}}\sup_{P\in \mathcal{P}}
\mathbb{E}_{P^{(n)}}\left[\ell\left(\hat{\theta}_{n}(X),\theta(P)\right)\right].
\end{equation}
The minimax risk $R_{n}$  in \eqref{eq:minimax.minimax} is often viewed as a
function of the sample size $n$, in
which case any positive sequence $\psi_n$ such that $\lim_{n \rightarrow \infty}
R_n/\psi_n$ remains bounded away from $0$ and $\infty$ is called a {\it minimax
rate.}
Notice that minimax rates are unique up to constants and lower order terms. 

To define a meaningful minimax risk, 
it is essential to have some constraint 
on the set of distributions $\mathcal{P}$ in \eqref{eq:minimax.maximumrisk} and
\eqref{eq:minimax.minimax}.
If $\mathcal{P}$ is too large, then
the minimax rate $R_{n}$ in \eqref{eq:minimax.minimax} will not
converge to $0$ as $n$ goes to $\infty$: this means that 
the problem is statistically ill-posed.
If $\mathcal{P}$ is too small, the
minimax estimator depends too much on the specific distributions in $\mathcal{P}$
and is not a useful measure of a statistical difficulty.

Determining the value of the minimax risk $R_{n}$ in
\eqref{eq:minimax.minimax} for a given problem requires two separate
calculations: an upper bound on $R_n$ and a lower
bound. In order to derive an upper bound, one analyzes the asymptotic
risk of a specific estimator $\hat{\theta}_n$. Lower bounds are
instead usually computed by measuring the difficulty of a multiple
hypothesis testing problem that entails identifying finitely many
distributions in $\mathcal{P}$ that are maximally difficult to
discriminate \citep[see, e.g.][Section 2.2]{Tsybakov2008}.

For the dimension estimation problem, we obtain an upper bound on
$R_{n}$ by analyzing the performance of an estimator based on the
length of the traveling salesman problem, as described in Section
\ref{sec:upper}. On the other hand, the calculation of the lower bound
presents non-trivial technical difficulties, because probability distributions supported on manifolds of different
dimensions are singular with respect to each other, and therefore trivially
discriminable. In order to overcome such an issue, we resort to
constructing mixtures of mutually singular distributions. We detail
this construction in Section \ref{sec:lower}.

There is a gap
between the lower and upper bounds we derive on the minimax
risk,
as it is often the case in such calculations.
Nonetheless, the derivation of the bounds is of use in
understanding the difficulty of the dimension estimation problem.

\subsection{Regularity conditions on the Distributions and their Supporting Manifolds}
\label{subsec:regular}

In our analysis we require various regularity conditions on the class $\mathcal{P}$ of
probability distributions appearing in the minimax risk \eqref{eq:intro.minimax}.
Most of these conditions are of a geometric nature and concern the properties of
the manifolds supporting the probability distributions in $\mathcal{P}$.
Altogether, our assumptions rule out manifolds
that are so complicated to make the dimension estimation problem unsolvable and,
 therefore, guarantee that the minimax risk
$R_{n}$ in \eqref{eq:minimax.minimax} converges to $0$ as $n$
goes to $\infty$. Such regularity assumptions are quite mild, and in fact allow
for virtually all types of
manifolds usually encountered in manifold learning problems.

Our first assumption is that the probability distributions in $\mathcal{P}$ are
supported over manifold contained inside a compact set, which, without loss of
generality, we take to be the cube $I := [-K_{I},K_{I}]^m$, for some $K_I>0$. See Figure \ref{fig:regular.boundingbox}.

\begin{figure}
	\begin{center}
		\includegraphics{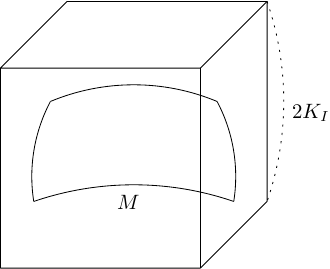}
	\end{center}
	\caption{A manifold $M$ is assumed to be contained inside the cube $I = [-K_{I},K_{I}]^m$, for some $K_I>0$. See Definition \ref{def:regular.manifold}.}
	\label{fig:regular.boundingbox}
\end{figure}

Second, to exclude manifolds that are arbitrarily complicated in the
sense of having unbounded curvatures or of being nearly self intersecting, we
assume that the  reach is uniformly bounded from below.  More precisely, we
will constrain both the global reach and the local reach as follows. Fix $\tau_{g},
\tau_{\ell} \in (0,\infty]$ with $\tau_{g}\leq \tau_{\ell}$. The global reach
condition for a manifold $M$ is that the usual reach $\tau(M)$ in
\eqref{eq:definition.reach} is lower bounded by $\tau_{g}$ as in Figure
\ref{fig:regular.reach}\subref{subfig:regular.reach.global}, and the local reach condition is that $M$ can
be covered by small patches whose reaches are lower bounded by $\tau_{\ell}$, as in
Figure \ref{fig:regular.reach}\subref{subfig:regular.reach.local}.  (See Definition
\ref{def:regular.manifold} below for more details.)
%

\begin{figure}
	\begin{center}
		\begin{subfigure}[b]{0.4\textwidth}
			\begin{center}
				\includegraphics{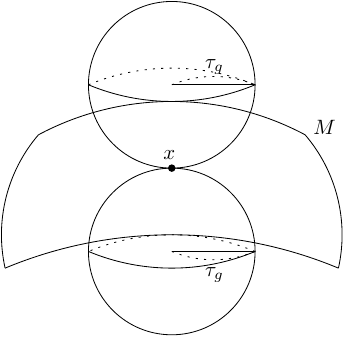}
			\end{center}
			\caption{global reach condition}
			\label{subfig:regular.reach.global}
		\end{subfigure}
		\begin{subfigure}[b]{0.4\textwidth}
			\begin{center}
				\includegraphics{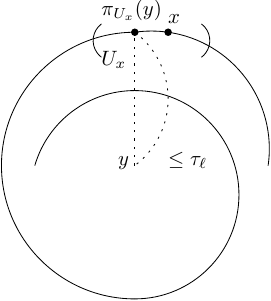}
			\end{center}
			\caption{local reach condition}
			\label{subfig:regular.reach.local}
		\end{subfigure}
	\end{center}
	\caption{A manifold $M$ with \protect\subref{subfig:regular.reach.global} {\em global reach} at least  $\tau_{g}$, or \protect\subref{subfig:regular.reach.local} {\em local reach} at least $\tau_{\ell}$. See Definition \ref{def:regular.manifold}.}
	\label{fig:regular.reach}
\end{figure}

Third, we assume that the data are generated from a distribution $P$
supported on a manifold $M$ having a density with respect to the (restriction of
the) Hausdorff
measure on $M$ bounded from above by some positive constant $K_{p}$.


For manifolds without boundary, the above conditions suffice for our
analysis.  However, to deal with manifolds with boundary, we need
further assumptions, namely local geodesic completeness and essential
dimension.  A manifold $M$ is said to be complete if any geodesic can
be extended arbitrarily farther, i.e. for any geodesic path
$\gamma:[a,b]\to M$, there exists a geodesic
$\tilde{\gamma}:\mathbb{R}\to M$ that satisfies
$\tilde{\gamma}|_{[a,b]}=\gamma$. \citep[see, e.g.,][]{Lee2000,   Lee2003, Petersen2006, doCarmo1992}.
Accordingly, we define a manifold $M$ to be
locally (geodesically) complete, if any two points inside a geodesic
ball of small enough radius in the interior of $M$ can be joined by a
geodesic whose image also lies on the interior of $M$.

Fifth, we assume the manifold $M$ is of essential dimension $d$, in
volume sense. If we fix any point $p$ in the $d$-dimensional manifold
$M$, then the volume of a ball of radius $r$ grows in order of $r^{d}$
when $r$ is small. By extending this, fix $K_{v}\in (0,2^{-m}]$, and we
say that the manifold $M$ is of essential volume dimension $d$, if
the volume of a geodesic ball of radius $r$ around any point in $M$
is lower bounded by $K_{v}r^{d}\omega_{d}$, for some positive
constant $K_v$ and all $r$ small enough.

    We are now ready to formally define the class $\mathcal{P}$ of probability
    distributions that we will consider in our analysis of the minimax problem
    \eqref{eq:intro.minimax}.

\begin{defn}
\label{def:regular.manifold}
Fix $\tau_{g},\,\tau_{\ell}\in(0,\infty]$, $K_{I}\in[1,\infty)$,
$K_{v}\in(0,2^{-m}]$, with $\tau_{g}\leq\tau_{\ell}$. Let
$\mathcal{M}_{\tau_{g},\tau_{\ell},K_{I},K_{v}}^{d}$ be the set of compact
$d$-dimensional manifolds $M$ such that:
\begin{itemize}
         \item[(1)] $M \subset I:=[-K_{I},K_{I}]^{m}\subset\mathbb{R}^{m}$;
     \item[(2)] $M$ is of {\em global reach} at least $\tau_{g}$,
i.e. $\tau(M)\geq \tau_{g}$, and $M$ is of {\em local reach} at
least $\tau_{\ell}$, i.e. for all $p \in M$, there exists a
neighborhood $U_{p}$ in $M$ such that $\tau(U_{p})\geq
{\tau_{\ell}}$;
	     \item[(3)] $M$ is {\em locally (geodesically) complete} (with respect
          to $\tau_{g}$): for all $p \in {\rm int} (M)$ and for all
          $q_{1}, q_{2} \in B_{M}(p,\,2\sqrt{3}\tau_{g})$, there
          exists a geodesic $\gamma$ joining $q_{1}$ and $q_{2}$ whose
          image lies on $int M$;
     \item[(4)] $M$ is of {\em essential volume dimension $d$} (with respect to
	$K_{v}$ and $\tau_{g}$): if for all $p \in M$ and for all $r\leq
	\sqrt{3} \tau_{g}$, $vol_{M}(B_{M}(p,r))\geq K_{v} r^{d} \omega_{d}$.
\end{itemize}
Let $\mathcal{P} = \mathcal{P}_{\tau_{g},\tau_{\ell},K_{I},K_{v},K_{p}}^{d}$ be the
set of Borel probability distributions $P$ such that:
\begin{itemize}
\item[(5)] $P$ is supported on a $d$-dimensional manifold $M\in \mathcal{M}_{\tau_{g},\tau_{\ell},K_{I},K_{v}}^{d}$;
\item[(6)] $P$ is absolutely continuous with respect to the 
restriction $vol_M$ of the $d$-dimensional Hausdorff 
measure on the supporting manifold $M$ and such that $\sup_{x \in M} \frac{dP}{dvol_{M}}(x) \leq K_{p}$.
\end{itemize}
For every $P\in \mathcal{P}_{\tau_{g},\tau_{\ell},K_{I},K_{v},K_{p}}^{d}$, 
denote the dimension of its distribution as $d(P)$.
\end{defn}


\begin{rem}
For manifolds without boundary, the local completeness condition and the
essential volume dimension condition in Definition \ref{def:regular.manifold}
always hold. The Hopf Rinow Theorem \citep[see, e.g.][Theorem 16]{Petersen2006} implies
that any compact closed manifold without boundary is geodesic complete, which
implies it is locally complete  in the sense of (3) in Definition
\ref{def:regular.manifold}. Also, \citep[Lemma 5.3]{NiyogiSW2008}
implies that, for a $d$-dimensional manifold $M$ and all $0<r\leq 2\tau(M)$, 
\[
vol_{M}(B_{M}(p,r))\geq
r^{d}\left(1-\left(\frac{r}{2\tau(M)}\right)^{2}\right)^{\frac{d}{2}}\omega_{d},
\]
for all $ p\in M$.
Hence, when, for fixed $\tau_{g}>0$,  a $d$-dimensional manifold $M$ (without
boundary)  satisfies
$\tau(M)\geq \tau_{g}$, then for any $0<r\leq\sqrt{3}\tau_{g}$, $vol_{M}(B_{M}(p,r))\geq
2^{-d}r^{d}\omega_{d}$, so the essential volume dimension condition is satisfied.
\end{rem}

\begin{rem}
The notion of the local reach $\tau_{\ell}$ in Definition \ref{def:regular.manifold} is less standard than the global reach $\tau_{g}$, which is the usual definition of the reach in \citep[see, e.g.][]{Federer1959}. The local reach condition is only used in getting the lower bound of the minimax rate $R_{n}$ in Section \ref{sec:lower}, while the global reach condition is used in both Section \ref{sec:upper} and Section \ref{sec:lower}. In fact, the reach of the manifold is determined either by a bottleneck structure or an area of high curvature, as in \cite[Theorem 3.4]{AamariKCMRW2017}. And the global reach condition is imposing regularities on both cases, while the local reach condition is imposing regularities only on the latter case, i.e. on the local curvature. Setting the local reach $\tau_{\ell}$ equal to the global reach $\tau_{g}$ reduces to the model that has conditions only on the usual reach.
\end{rem}

The  regularity conditions in Definition \ref{def:regular.manifold}
imply further constraints on both the distributions in $\mathcal{P}$  and
their supporting manifolds, in Lemma
\ref{lem:regular.bounded.volume}, \ref{lem:regular.bounded.covernumber}, and
\ref{lem:regular.bounded.length}. Such properties are exploited in Section \ref{sec:upper} and \ref{sec:lower}. The proofs for  Lemma~\ref{lem:regular.bounded.volume}, \ref{lem:regular.bounded.covernumber}, and \ref{lem:regular.bounded.length} are in Appendix \ref{sec:proof.definition.regular}.

\begin{lem}
\label{lem:regular.bounded.volume}
Fix $\tau_{g}\in(0,\infty]$, and let $M$ be a $d$-dimensional manifold
with global reach $\geq\tau_{g}$. For $r\in(0,\tau_{g})$, let
$M_{r}:=\{x\in\mathbb{R}^{m}:\ dist_{\mathbb{R}^{m},}(x,M)<r\}$ be
an $r$-neighborhood of $M$ in $\mathbb{R}^{m}$. Then, the volume
of $M$ is upper bounded as 
\[
vol_{M}(M)\leq\frac{m!}{d!}r^{d-m}vol_{\mathbb{R}^{m}}(M_{r}).
\]
Further, fix $\tau_{\ell}\in(0,\infty]$, $K_{I}\in[1,\infty)$,
$K_{v}\in(0,2^{-m}]$, with $\tau_{g}\leq\tau_{\ell}$, and suppose
$M\in\mathcal{M}_{\tau_{g},\tau_{\ell},K_{I},K_{v}}^{d}$. Then the
volume of $M$ is upper bounded as
\[
vol_{M}(M) \leq C_{K_{I},m}^{(\ref*{lem:regular.bounded.volume})}\max\left\{1,\tau_{g}^{d-m}\right\},
\]
where $C_{K_{I},m}^{(\ref*{lem:regular.bounded.volume})}$
is a constant depending only on $K_{I}$ and $m$.
\end{lem}

\begin{lem}
\label{lem:regular.bounded.covernumber}
 Fix $\tau_{g},\,\tau_{\ell}\in(0,\infty]$, $K_{I}\in[1,\infty)$, $K_{v}\in(0,2^{-m}]$, with $\tau_{g}\leq\tau_{\ell}$. Let $M\in\mathcal{M}_{\tau_{g},\tau_{\ell},K_{I},K_{v}}^{d}$ and $r\in(0,2\sqrt{3}\tau_{g}]$. 
Then $M$ can be covered by $N$ radius $r$ balls $B_{M}(p_{1},r)$, $\ldots$, $B_{M}(p_{N},r)$, with 
\[
N \leq \left\lfloor \frac{2^{d}vol(M)}{K_{v} r^{d}\omega_{d}}\right\rfloor.
\]
\end{lem}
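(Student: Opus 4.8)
The plan is to run the classical volume–packing argument. First I would let $p_1,\dots,p_N$ be a maximal subset of $M$ whose pairwise geodesic distances are all at least $r$; such a set is finite because $M$ is compact (condition (1) of Definition~\ref{def:regular.manifold}). I claim these points already do the job. By maximality, for every $x\in M$ there is some $p_i$ with $dist_M(x,p_i)<r$ --- otherwise $\{p_1,\dots,p_N,x\}$ would still be $r$-separated, contradicting maximality --- so the balls $B_M(p_1,r),\dots,B_M(p_N,r)$ cover $M$. It therefore only remains to show $N\le 2^d\,vol(M)/(K_v r^d\omega_d)$, since a cover by fewer balls can always be padded with repeated balls to consist of exactly $N=\big\lfloor 2^d\,vol(M)/(K_v r^d\omega_d)\big\rfloor$ of them.

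To bound $N$, observe that the open geodesic balls $B_M(p_i,r/2)$ are pairwise disjoint: if some $y$ lay in $B_M(p_i,r/2)\cap B_M(p_j,r/2)$ with $i\neq j$, the triangle inequality for $dist_M$ would give $dist_M(p_i,p_j)<r$, a contradiction. Each of these balls is contained in $M$, so $\sum_{i=1}^N vol_M\big(B_M(p_i,r/2)\big)\le vol_M(M)$. Now apply the essential-volume-dimension condition (4) of Definition~\ref{def:regular.manifold} to each $B_M(p_i,r/2)$, which for $r$ in the admissible range gives $vol_M\big(B_M(p_i,r/2)\big)\ge K_v (r/2)^d\omega_d$. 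Combining, $N\,K_v (r/2)^d\omega_d\le vol_M(M)$, i.e.\ $N\le 2^d\,vol(M)/(K_v r^d\omega_d)$, and since $N$ is a nonnegative integer we may replace the right-hand side by its floor.

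The only delicate point is the interaction between the hypothesis $r\in(0,4\tau_g]$ and the fact that condition (4) of Definition~\ref{def:regular.manifold} furnishes the volume lower bound only for radii at most $\sqrt{3}\,\tau_g$: when $r$ is near the upper end of its range, one must apply (4) to the disjoint balls at radius $\min\{r/2,\sqrt{3}\,\tau_g\}$ rather than $r/2$ and absorb the resulting factor into the estimate. Tracking this bookkeeping is precisely where the constant $2^d$ in the statement is pinned down; everything else --- existence and finiteness of the maximal $r$-separated set, the covering property, the disjointness of the half-radius balls, and the passage to the floor --- is routine.
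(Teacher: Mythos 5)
The paper does not give a self-contained proof of this lemma; it simply cites an external reference (\citep[4.3.1.\ Lemma 3]{MaF2011}). Your reconstruction via a maximal $r$-separated set and half-radius disjoint balls is the standard packing argument, and it is surely what the cited reference does, so in spirit the route is the same. The covering property from maximality, the disjointness of the $r/2$-balls, and the padding to reach exactly $N$ balls are all correctly handled.

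However, the point you flag as ``delicate'' is not mere bookkeeping, and your proposed fix does not close the gap. Condition~(4) of Definition~\ref{def:regular.manifold} gives $vol_{M}(B_{M}(p_{i},\rho))\geq K_{v}\rho^{d}\omega_{d}$ only for $\rho\leq\sqrt{3}\tau_{g}$, so you are forced to apply it at $\rho=\min\{r/2,\sqrt{3}\tau_{g}\}$. When $r\in(2\sqrt{3}\tau_{g},4\tau_{g}]$ this yields
\[
N\leq\frac{vol_{M}(M)}{K_{v}(\sqrt{3}\tau_{g})^{d}\omega_{d}}=\frac{vol_{M}(M)}{K_{v}3^{d/2}\tau_{g}^{d}\omega_{d}},
\]
whereas the claimed bound is at most
\[
\frac{2^{d}\,vol_{M}(M)}{K_{v}r^{d}\omega_{d}}\geq\frac{2^{d}\,vol_{M}(M)}{K_{v}(4\tau_{g})^{d}\omega_{d}}=\frac{vol_{M}(M)}{K_{v}2^{d}\tau_{g}^{d}\omega_{d}}.
\]
Since $3^{d/2}<2^{d}$, the packing estimate exceeds the target by a factor of $(2/\sqrt{3})^{d}$, so the constant $2^{d}$ in the statement cannot absorb the discrepancy. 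Thus your argument establishes the lemma only on the subrange $r\in(0,2\sqrt{3}\tau_{g}]$; for the remaining $r\in(2\sqrt{3}\tau_{g},4\tau_{g}]$ the bound as written is not justified by your proof. It is worth noting that the paper's only application of this lemma (in the proof of Lemma~\ref{lem:upper.lowerdim}) takes $r=2\sqrt{3}\tau_{g}$, which sits exactly at the edge where your argument does work, so the downstream results are unaffected; but as a proof of the lemma as stated, the step ``absorb the resulting factor'' would need to be carried out explicitly, and the numbers show it does not go through for the full range $(0,4\tau_{g}]$.
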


\begin{lem}
\label{lem:regular.bounded.length}
Fix $\tau_{g},\,\tau_{\ell}\in(0,\infty]$, $K_{I}\in[1,\infty)$, $K_{v}\in(0,2^{-m}]$, with $\tau_{g}\leq\tau_{\ell}$. Let $M\in\mathcal{M}_{\tau_{g},\tau_{\ell},K_{I},K_{v}}^{d}$ and let
$\exp_{p_{k}}:\mathcal{E}_{k} \subset \mathbb{R}^{m} \rightarrow {\mathcal{M}}$
be an exponential map, where  $\mathcal{E}_{k}$ is the domain of the exponential
map $\exp_{p_{k}}$ and $T_{p_{k}}M$ is identified with $\mathbb{R}^{d}$.
For all
$v,w\in \mathcal{E}_{k}$, let $R_{k}:=\max\{||v||,||w||\}$. Then 
\[
\|\exp_{p_{k}}(v)-\exp_{p_{k}}(w)\|_{\mathbb{R}^{m}}\leq
\frac{\sinh(\sqrt{2}R_{k}/\tau_{\ell})}{\sqrt{2}R_{k}/\tau_{\ell}}\|v-w\|_{\mathbb{R}^{d}}.
\]
\end{lem}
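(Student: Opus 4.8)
The plan is to estimate the extrinsic distance $\|\exp_{p_k}(v)-\exp_{p_k}(w)\|_{\mathbb{R}^m}$ by the length of the image under $\exp_{p_k}$ of the straight segment from $v$ to $w$ in $T_{p_k}M$, and then to bound that length through the operator norm of the differential $d\exp_{p_k}$. Since $M$ carries the metric induced from $\mathbb{R}^m$, for any $p,q\in M$ we have $\|p-q\|_{\mathbb{R}^m}\le dist_M(p,q)\le\mathrm{Length}(c)$ for every path $c\colon[0,1]\to M$ joining $p$ and $q$. Applying this to $c(t)=\exp_{p_k}(u(t))$ with $u(t):=(1-t)v+tw$ --- whose image lies in $\mathcal{E}_k$, automatically so when $\mathcal{E}_k$ is convex, as it is in our applications where it is the preimage of a small geodesic ball --- and using $c'(t)=(d\exp_{p_k})_{u(t)}(w-v)$, we obtain
\[
\|\exp_{p_k}(v)-\exp_{p_k}(w)\|_{\mathbb{R}^m}\le\int_0^1\bigl\|(d\exp_{p_k})_{u(t)}(w-v)\bigr\|_{\mathbb{R}^m}\,dt\le\Bigl(\sup_{t\in[0,1]}\bigl\|(d\exp_{p_k})_{u(t)}\bigr\|_{\mathrm{op}}\Bigr)\|v-w\|_{\mathbb{R}^d}.
\]
By convexity of the norm, $\|u(t)\|\le\max\{\|v\|,\|w\|\}=R_k$ for all $t$, so it suffices to bound $\|(d\exp_{p_k})_u\|_{\mathrm{op}}$ for $\|u\|\le R_k$.

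For that bound I would use Jacobi fields and Rauch's comparison theorem. Fix $u$ with $0<\|u\|\le R_k$ and $\xi\in T_{p_k}M$; then $(d\exp_{p_k})_u(\xi)=J(1)$, where $J$ is the Jacobi field along the geodesic $s\mapsto\exp_{p_k}(su)$, $s\in[0,1]$, with $J(0)=0$ and $J'(0)=\xi$. By the Gauss lemma, $d\exp_{p_k}$ fixes the radial direction $u/\|u\|$ and preserves its length, so it is enough to bound $\|J(1)\|$ for $\xi\perp u$. The local reach condition (item (2) of Definition \ref{def:regular.manifold}), combined with the standard reach--curvature relation \citep{Federer1959,NiyogiSW2008}, gives a lower bound $-\kappa_\ell^2$ on the sectional curvatures of the patch around $p_k$ (this is the source of $\kappa_\ell$), and also ensures that the radial geodesic $s\mapsto\exp_{p_k}(su)$ has no conjugate points on $(0,1]$. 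Comparing against the simply connected space form of constant curvature $-\kappa_\ell^2$, Rauch's theorem then yields $\|J(t)\|\le\frac{\sinh(\kappa_\ell\|u\|t)}{\kappa_\ell\|u\|}\|\xi\|$; at $t=1$, using $\sinh(x)/x\ge1$ to absorb the radial part, we get $\|(d\exp_{p_k})_u\|_{\mathrm{op}}\le\frac{\sinh(\kappa_\ell\|u\|)}{\kappa_\ell\|u\|}$.

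Finally, since $x\mapsto\sinh(x)/x$ is nondecreasing on $(0,\infty)$ and $\|u(t)\|\le R_k$, the supremum in the display above is at most $\frac{\sinh(\kappa_\ell R_k)}{\kappa_\ell R_k}$, which together with the first step gives the claim. The main obstacle is the differential estimate, since it relies on the \emph{reverse} form of Rauch's theorem (Jacobi fields in $M$ grow no faster than in the lower-curvature space form), which holds only when there are no conjugate points along the radial geodesics --- precisely what the local reach (and local geodesic completeness) hypotheses of Definition \ref{def:regular.manifold} supply --- and on pinning down $\kappa_\ell$ from $\tau_\ell$ with the correct constant; if one prefers to avoid citing the reverse Rauch theorem, the same estimate can be obtained by a direct Sturm-type comparison for the scalar function $t\mapsto\|J(t)\|$, where the no-conjugate-point hypothesis is what keeps this function positive on $(0,1]$.
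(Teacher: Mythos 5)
Your proof is correct but takes a genuinely different route from the paper's. The paper bounds the extrinsic distance by the geodesic distance $dist_M(q_1,q_2)$, applies the Toponogov comparison theorem (Lemma \ref{lem:proof.upper.Toponogov}) to compare the geodesic triangle $\triangle p_k q_1 q_2$ with its comparison triangle in the hyperbolic plane $H_{\kappa_\ell}$ of constant curvature $-\kappa_\ell^2$, computes the third side via the hyperbolic law of cosines (Lemma \ref{lem:proof.upper.hyperbolic.cosine}), and then invokes a dedicated calculus inequality (Claim \ref{claim:proof.regular.dist.fraction}) to bound the ratio of that third side to $\|v-w\|$ by $\sinh(\kappa_\ell R_k)/(\kappa_\ell R_k)$. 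You instead push the straight segment $u(t)=(1-t)v+tw$ through $\exp_{p_k}$, dominate the extrinsic distance by the length of that curve and hence by $\bigl(\sup_t\|(d\exp_{p_k})_{u(t)}\|_{\mathrm{op}}\bigr)\|v-w\|$, and control the differential via the Rauch comparison theorem for Jacobi fields, getting $\|(d\exp_{p_k})_u\|_{\mathrm{op}}\le\sinh(\kappa_\ell\|u\|)/(\kappa_\ell\|u\|)$ from the sectional-curvature lower bound $-\kappa_\ell^2$ implied by the local reach; monotonicity of $x\mapsto\sinh x/x$ then finishes (a step the paper also uses, at \eqref{eq:proof.regular.bounded.length.unifupper}). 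Both arguments compare against the same model space $H_{\kappa_\ell}$ and both implicitly need the radial geodesics out of $p_k$ to be minimizing and conjugate-point-free, supplied by the local reach and local geodesic completeness hypotheses; your version additionally needs the segment $u(t)$ to stay inside $\mathcal{E}_k$, which requires $\mathcal{E}_k$ to be convex --- true in the paper's applications where $\mathcal{E}_k$ is a tangent ball, but not formally part of the hypotheses as stated, so you should note it. What your approach buys is a shorter, more standard argument (Rauch plus an operator-norm estimate via the Gauss-lemma radial/tangential splitting) that avoids the hyperbolic law of cosines and the auxiliary calculus Claim; what the paper's approach buys is a proof built only on the global Toponogov theorem plus elementary single-variable calculus, sidestepping the Jacobi-field machinery entirely.
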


Under these regularity conditions,  the minimax risk $R_{n}$ is defined as 
\begin{equation}
\label{eq:regular.minimax}
R_n =
\inf_{\hat{d}_{n}}\sup_{P\in \mathcal{P}}
\mathbb{E}_{P^{(n)}}\left[1\left(\hat{d}_{n}(X) \neq d(P)\right)\right],
\end{equation}
where  in Section \ref{sec:upper} and \ref{sec:lower} we fix
$d_{1},d_{2}\in\mathbb{N}$ with $1\leq d_{1} < d_{2} \leq m$ and define
\begin{equation}
\label{eq:regular.distribution.binary}
\mathcal{P} = 
\mathcal{P}_{\tau_{g},\tau_{\ell},K_{I},K_{v},K_{p}}^{d_{1}}
\bigcup 
\mathcal{P}_{\tau_{g},\tau_{\ell},K_{I},K_{v},K_{p}}^{d_{2}},
\end{equation}
 and in Section \ref{sec:multidimension} we set instead
\begin{equation}
\label{eq:regular.distribution.multi}
\mathcal{P} = 
\overset{m}{\underset{d=1}{\bigcup}}\mathcal{P}_{\tau_{g},\tau_{\ell},K_{I},K_{v},K_{p}}^{d}.
\end{equation}

In \eqref{eq:regular.minimax}, $\hat{d}_{n}$  is any dimension estimator based on data
$X=(X_{1},\ldots,X_{n})$, and the loss function $\ell(\cdot,\cdot)$ is
$0-1$ loss, so for all $x,y\in\mathbb{R}$, $\ell(x,y)=1(x \neq y)$.

\vspace{11pt}

\section{Upper Bound for Choosing Between Two Dimensions}
\label{sec:upper}

In this section we provide an upper bound on the
minimax rate $R_{n}$ in \eqref{eq:regular.minimax}
when $d(P)$ can only take two known values. 
Fix
$d_{1},\,d_{2}\in\mathbb{N}$ with $1\leq d_{1}<d_{2}\leq m$, and assume that the
data are generated from a distribution $P \in \mathcal{P}$ such that either $d(P)
= d_{1}$ or $d(P) = d_{2}$ as in \eqref{eq:regular.distribution.binary}. In this case, the minimax risk
quantifies the statistical hardness of the hypothesis testing problem of
deciding whether the data originate from a $d_1$ or $d_2$-dimensional
distribution. In
Section \ref{sec:multidimension} we will relax this assumption and allow 
for the intrinsic dimension $d(P)$ to be any integer between $1$ and $m$ as in \eqref{eq:regular.distribution.multi}.
All the proofs for this section are in Section \ref{sec:proof.upper}.

Our strategy to derive an upper bound on $R_n$ is to choose a particular
estimator $\hat{d}_n$ and then derive a uniform upper bound on its risk over the class $\mathcal{P}$ in
    \eqref{eq:regular.distribution.binary}, i.e. an upper bound for the quantity
\begin{equation}
\label{eq:upper.maximumrisk}
\underset{P\in \mathcal{P}}{\sup}
\mathbb{E}_{P^{(n)}}\left[1\left(\hat{d}_{n}(X) \neq d(P)\right)\right],
\end{equation}
where $P^{(n)}$ denotes the $n$-fold product of $P$. 
This will in turn yield an upper bound on the minimax risk $R_{n}$, since
\begin{equation}
\label{eq:upper.maximumrisk.bound}
R_n =
\inf_{\hat{d}_{n}}\sup_{P\in \mathcal{P}}
\mathbb{E}_{P^{(n)}}\left[1\left(\hat{d}_{n}(X) \neq d(P)\right)\right] \leq
\sup_{P\in \mathcal{P}}
\mathbb{E}_{P^{(n)}}\left[1\left(\hat{d}_{n}(X) \neq d(P)\right)\right]. 
\end{equation}
Naturally, choosing an appropriate estimator is critical to get a sharp bound. In Section \ref{subsec:upper.estimator}, we define our dimension estimator $\hat{d}_{n}$ and analyze its risk. From that analysis, we derive an upper bound on the minimax risk $R_{n}$ in \eqref{eq:regular.minimax} in Section \ref{subsec:upper.minimax}.

\subsection{Dimension Estimator and its Analysis}
\label{subsec:upper.estimator}

Our dimension estimator $\hat{d}_{n}$ is based on the $d_{1}$-squared length of the TSP
(Traveling Salesman Path) generated by the data. The $d_{1}$-squared length of
the TSP generated by the data is the minimal $d_{1}$-squared length of
all possible paths passing through each sample point $X_{i}$ once, which is
\begin{equation}
\label{eq:upper.TSPpath}
\underset{\sigma\in S_{n}}{\min}\left\lbrace \overset{n-1}{\underset{i=1}{\sum}}\|X_{\sigma(i+1)}-X_{\sigma(i)}\|_{\mathbb{R}^{m}}^{d_{1}}\right\rbrace.
\end{equation}
Then, $\hat{d}_{n} = d_{1}$ if and only if the $d_{1}$-squared length of the TSP  is below a certain
threshold; that is
\begin{equation}
\label{eq:upper.dim.estimator}
\hat{d}_{n}(X):=\begin{cases}
d_{1}, & if\ \underset{\sigma\in S_{n}}{\min}\left\{ \overset{n-1}{\underset{i=1}{\sum}}\|X_{\sigma(i+1)}-X_{\sigma(i)}\|_{\mathbb{R}^{m}}^{d_{1}}\right\}\leq C_{K_{I},K_{v},m}^{(\ref*{lem:upper.lowerdim})}\max\left\{1,\tau_{g}^{d_{1}-m}\right\}, \\
d_{2}, & \text{otherwise.}
\end{cases}
\end{equation}
where $C_{K_{I},K_{v},m}^{(\ref*{lem:upper.lowerdim})}$ is a constant to be defined later. 



We begin our analysis of the estimator $\widehat{d}_n$ with Lemma
\ref{lem:upper.higherdim}, which shows that 
$\hat{d}_{n}$ makes an error with probability
of order
$O\left(n^{-\left(\frac{d_{2}}{d_{1}}-1\right)n}\right)$ if the
correct dimension is $d_{2}$. 
Specifically, we demonstrate that, for any positive value $L$, the $d_{1}$-squared
length of a piecewise linear path from $X_{1}$ to $X_{n}$,
$\overset{n-1}{\underset{i=1}{\sum}}\|X_{i+1}-X_{i}\|_{\mathbb{R}^{m}}^{d_{1}}$,
is upper bounded by $L$ with
a very small probability of order
$O\left(n^{-\left(\frac{d_{2}}{d_{1}}-1\right)n}\right)$,
as in \eqref{eq:upper.path.upperdimension}. Hence the
$d_{1}$-squared length of the path is not likely to be bounded by
any such threshold $L$.

\begin{lem}
\label{lem:upper.higherdim}
Fix $\tau_{g},\,\tau_{\ell}\in(0,\infty]$, $K_{I}\in[1,\infty)$,
$K_{v}\in(0,2^{-m}]$, $K_{p}\in[(2K_{I})^{m},\infty)$,
$d_{1},\,d_{2}\in\mathbb{N}$, with $\tau_{g}\leq\tau_{\ell}$ and
$1\leq d_{1} < d_{2} \leq m$. Let $X_{1},\ldots,X_{n}\sim
P\in\mathcal{P}_{\tau_{g},\tau_{\ell},K_{I},K_{v},K_{p}}^{d_{2}}$.
Then for all $L>0$,
\begin{equation}
\label{eq:upper.path.upperdimension}
P^{(n)}\left[\overset{n-1}{\underset{i=1}{\sum}}\|X_{i+1}-X_{i}\|_{\mathbb{R}^{m}}^{d_{1}}\leq L\right]\leq\frac{\left(C_{K_{I},K_{p},m}^{(\ref*{lem:upper.higherdim})}\right)^{n-1}L^{\frac{d_{2}}{d_{1}}(n-1)}\max\left\{1,\tau_{g}^{(d_{2}-m)(n-1)}\right\}}{(n-1)^{\left(\frac{d_{2}}{d_{1}}-1\right)(n-1)}(n-1)!},
\end{equation}
where $C_{K_{I},K_{p},m}^{(\ref*{lem:upper.higherdim})}$ is a constant depending only on $K_{I}$, $K_{p}$, and $m$.

\end{lem}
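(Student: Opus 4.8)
The plan is to bound the probability that the $d_1$-squared length of the path is small by first showing that \emph{no configuration} of $n$ points on a $d_2$-dimensional manifold $M$ can have too many consecutive pairs that are close together, and then integrating the resulting constraint against the density. Since $P$ is absolutely continuous with respect to $vol_M$ with density at most $K_p$, we have $P^{(n)} \leq K_p^n$ times the $n$-fold Hausdorff measure on $M$, so it suffices to estimate the $vol_M^{(n)}$-measure of the set $\{(x_1,\dots,x_n) \in M^n : \sum_{i=1}^{n-1}\|x_{i+1}-x_i\|^{d_1} \leq L\}$. First I would condition on $x_1,\dots,x_{i}$ and estimate the conditional measure of the slice in the $x_{i+1}$ variable: the constraint forces $\|x_{i+1}-x_i\|^{d_1} \leq L_i$ for some allocation $\sum_i L_i \leq L$, i.e. $x_{i+1}$ lies in a Euclidean ball of radius $L_i^{1/d_1}$ around $x_i$, intersected with $M$.

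The key geometric input is a uniform upper bound on $vol_M(B_{\mathbb{R}^m}(x,\rho) \cap M)$ for a $d_2$-dimensional manifold $M$ of reach at least $\tau_g$. For small $\rho$ this intersection is comparable to a $d_2$-dimensional Euclidean ball, so $vol_M(B_{\mathbb{R}^m}(x,\rho)\cap M) \leq C\, \rho^{d_2}$; for $\rho$ not small one uses Lemma \ref{lem:regular.bounded.volume}, which gives $vol_M(M) \leq C_{K_I,d_2,m}^{(\ref*{lem:regular.bounded.volume},2)}(1 + \tau_g^{d_2-m})$, and this is where the $(1 + \tau_g^{(d_2-m)(n-1)})$ factor in the statement originates. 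Combining these two regimes, one obtains a bound of the form $vol_M(B_{\mathbb{R}^m}(x,\rho)\cap M) \leq C(1 + \tau_g^{d_2-m}) \min\{\rho^{d_2}, 1\} \leq C(1+\tau_g^{d_2-m}) \rho^{d_2}$ when $\rho$ is bounded (which it is, since $M \subset I$ and diameters are at most $2K_I\sqrt m$), absorbing constants into $C_{K_I,K_p,d_1,d_2,m}^{(\ref*{lem:upper.higherdim})}$. Integrating this over $x_{i+1}$ with $\rho = L_i^{1/d_1}$ contributes a factor $C(1+\tau_g^{d_2-m}) L_i^{d_2/d_1}$ for each of the $n-1$ steps, and integrating out $x_1$ over $M$ contributes one more bounded factor.

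Multiplying the $n-1$ per-step bounds gives $\bigl(C(1+\tau_g^{d_2-m})\bigr)^{n-1}\prod_{i=1}^{n-1} L_i^{d_2/d_1}$, and we still need to account for the freedom in choosing the allocation $(L_1,\dots,L_{n-1})$ with $\sum L_i \le L$. The clean way to handle this is to integrate directly: the set $\{\sum \|x_{i+1}-x_i\|^{d_1} \le L\}$, after the change of variables $r_i = \|x_{i+1}-x_i\|$ absorbed into the volume estimates, reduces the bound to $\bigl(C(1+\tau_g^{d_2-m})\bigr)^{n-1}$ times the integral $\int_{\sum t_i \le L, t_i \ge 0} \prod t_i^{d_2/d_1 - 1}\, dt$ over the scaled simplex (the exponent $d_2/d_1 - 1$ coming from $d(\rho^{d_2})$ in terms of $d(\rho^{d_1})$). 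This Dirichlet-type integral evaluates to $\Gamma(d_2/d_1)^{n-1} L^{(d_2/d_1)(n-1)} / \Gamma\bigl((d_2/d_1)(n-1) + 1\bigr)$, and by Stirling's formula the denominator is of order $(n-1)^{(d_2/d_1 - 1)(n-1)}(n-1)!$ up to an $n$-th power of a constant, which is exactly the denominator claimed in \eqref{eq:upper.path.upperdimension}. Folding $K_p^n$, $\Gamma(d_2/d_1)^{n-1}$, and the geometric constants into $C_{K_I,K_p,d_1,d_2,m}^{(\ref*{lem:upper.higherdim})}$ completes the argument.

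The main obstacle I anticipate is making the volume bound $vol_M(B_{\mathbb{R}^m}(x,\rho)\cap M) \le C\,\rho^{d_2}$ rigorous and \emph{uniform in the choice of manifold} $M \in \mathcal{M}^{d_2}_{\tau_g,\tau_\ell,K_I,K_v}$, with an explicit constant depending only on $d_2, m$ and $K_I$: one must control how the intersection of a Euclidean ball with $M$ can fail to be a single well-behaved patch, and patch this local estimate together with the global volume bound of Lemma \ref{lem:regular.bounded.volume} across all scales $\rho$ simultaneously. A secondary technical point is justifying the reduction to the Dirichlet integral — formally this is an induction on $n$ (peeling off one point at a time, integrating out $x_n$ first, and using that after integrating, the remaining mass constraint $\sum_{i=1}^{n-2}\|x_{i+1}-x_i\|^{d_1} \le L - \|x_n - x_{n-1}\|^{d_1}$ has the same form) together with the Beta-function identity $\int_0^{L} s^{a-1}(L-s)^{b-1}\,ds = B(a,b)L^{a+b-1}$, and one should check the induction hypothesis is stated with the right $L$-dependence so that the recursion closes.
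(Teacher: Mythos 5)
Your proposal follows the same strategy as the paper: you reduce the left-hand side to a conditional volume bound of the form $vol_M\bigl(B_{\mathbb{R}^m}(x,\rho)\cap M\bigr) \le C\bigl(1+\tau_g^{d_2-m}\bigr)\rho^{d_2}$ (this is exactly Claim~\ref{claim:proof.upper.conditionalcdf}, which invokes Lemma~\ref{lem:regular.bounded.volume} just as you do), and then you peel off one variable at a time to land on the simplex integral $\int_{\sum y_i\le L}\prod_i y_i^{d_2/d_1-1}\,dy$. The only place where you diverge from the paper is in evaluating that integral: you propose the exact Dirichlet formula $\Gamma(d_2/d_1)^{n-1}L^{(d_2/d_1)(n-1)}\big/\Gamma\bigl((d_2/d_1)(n-1)+1\bigr)$ followed by a Stirling estimate on the denominator, whereas the paper rescales to the unit simplex, applies AM--GM in the form $\prod y_i^{(d_2-d_1)/d_1}\le\bigl(\tfrac{1}{n-1}\sum y_i\bigr)^{(d_2-d_1)(n-1)/d_1}$, and then integrates $z\mapsto z^{d_2(n-1)/d_1-1}$ over $[0,1]$ after a radial change of variable, which produces the $(n-1)^{-(d_2/d_1-1)(n-1)}((n-1)!)^{-1}$ factor directly with no asymptotics. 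Both routes give the stated bound up to a $C^n$ factor; your Dirichlet route gives a marginally sharper constant but needs a non-asymptotic form of Stirling, which you correctly flag as a point to make rigorous. One small wording issue: the sentence ``the constraint forces $\|x_{i+1}-x_i\|^{d_1}\le L_i$ for some allocation'' reads as if you will pick a single worst-case allocation, which would not be valid; the argument must integrate over all allocations, as you in fact set up at the end via the induction and the Beta-function identity.
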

\begin{proof}[Proof of Lemma~\ref{lem:upper.higherdim}]
in Appendix \ref{sec:proof.upper}.
\end{proof}
Next, Lemma~\ref{lem:upper.lowerdim} shows
that the estimator
$\hat{d}_{n}$ in
\eqref{eq:upper.dim.estimator} is always correct when the intrinsic dimension is $d_{1}$, as in \eqref{eq:upper.travel}. 
Specifically, the $d_{1}$-squared length of the TSP path 
in \eqref{eq:upper.TSPpath} is bounded
by some positive threshold 
$C_{K_{I},K_{v},m}^{(\ref*{lem:upper.lowerdim})}\max\left\{1,\tau_{g}^{d_{1}-m}\right\}$.
We take note that, when $d_{1}=1$, Lemma
\ref{lem:upper.lowerdim} 
is straightforward: the length of the TSP path 
in \eqref{eq:upper.TSPpath} is 
upper bounded by
the length of curve $vol_{M}(M)$, 
as in Figure \ref{fig:upper.curve}. This fact, combined with
Lemma~\ref{lem:regular.bounded.volume}, which shows that $vol_{M}(M)\leq C_{K_{I},m}^{(\ref*{lem:regular.bounded.volume})}\max\left\{1,\tau_{g}^{1-m}\right\}$, yields the result.
In particular, the constant $C_{K_{I},K_{v},m}^{(\ref*{lem:upper.lowerdim})}$ can be set as $C_{K_{I},K_{v},m}^{(\ref*{lem:upper.lowerdim})}=C_{K_{I},m}^{(\ref*{lem:regular.bounded.volume})}$.

\begin{figure}
\begin{center}
\includegraphics{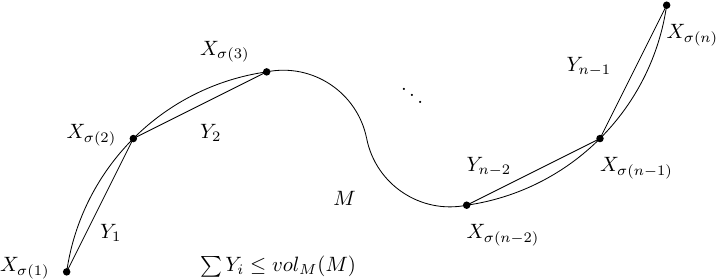}
\end{center}
\caption{When the manifold is a curve, the length of the TSP path
$\underset{\sigma \in S_{n}}{\min}\left\lbrace
\overset{n-1}{\underset{i=1}{\sum}}\|X_{\sigma(i+1)}-X_{\sigma(i)}\|_{\mathbb{R}^{m}}\right\rbrace$
in \eqref{eq:upper.TSPpath} is upper bounded by the length of
the curve $vol_{M}(M)$.}
\label{fig:upper.curve}
\end{figure}

When $d_{1}>1$, Lemma
\ref{lem:upper.lowerdim} is proved using Lemma~\ref{lem:regular.bounded.volume},
\ref{lem:regular.bounded.covernumber} and
\ref{lem:regular.bounded.length}, along with the H\"{o}lder continuity of
a $d_{1}$-dimensional space-filling curve
\citep{Steele1997.ch2,Buchin2008.ch2}.



\begin{lem}
\label{lem:upper.lowerdim}
Fix $\tau_{g},\,\tau_{\ell}\in(0,\infty]$, $K_{I}\in[1,\infty)$,
$K_{v}\in(0,2^{-m}]$, $d_{1}\in\mathbb{N}$, with
$\tau_{g}\leq\tau_{\ell}$. Let
$M\in\mathcal{M}_{\tau_{g},\tau_{\ell},K_{p},K_{v}}^{d_{1}}$ and
$X_{1},\ldots,X_{n}\in M$. Then 
\begin{equation}
\label{eq:upper.travel}
\underset{\sigma\in S_{n}}{\min} \overset{n-1}{\underset{i=1}{\sum}}
\|X_{\sigma(i+1)}-X_{\sigma(i)}\|_{\mathbb{R}^{m}}^{d_{1}} \leq 
C_{K_{I},K_{v},m}^{(\ref*{lem:upper.lowerdim})}\max\left\{1,\tau_{g}^{d_{1}-m}\right\},
\end{equation}
where $C_{K_{I},K_{v},m}^{(\ref*{lem:upper.lowerdim})}$ is a constant depending only on $K_{I}$, $K_{v}$, and $m$.
\end{lem}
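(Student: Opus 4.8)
The plan is to bound the $d_1$-squared length of the TSP path through $X_1,\dots,X_n$ by a quantity that depends only on the intrinsic geometry of the $d_1$-dimensional manifold $M$, independent of $n$. The strategy is to first exhibit \emph{one} path through all the sample points whose $d_1$-squared length is controlled, which then dominates the minimum over all $\sigma \in S_n$. For the case $d_1 = 1$, as already noted in the text, any path through the points has length at most the total length $vol_M(M)$ of the curve, which is bounded by $C_{K_I,d,m}^{(\ref*{lem:regular.bounded.volume},2)}(1+\tau_g^{1-m})$ by Lemma \ref{lem:regular.bounded.volume}; this settles the case immediately. So assume $d_1 > 1$.

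For $d_1 > 1$, I would proceed as follows. First, by Lemma \ref{lem:regular.bounded.covernumber}, cover $M$ by $N = \lfloor 2^{d_1} vol(M)/(K_v r^{d_1}\omega_{d_1})\rfloor$ geodesic balls $B_M(p_1,r),\dots,B_M(p_N,r)$ of radius $r$ (choosing, say, $r$ a fixed small multiple of $\tau_g$, or $r = \tau_g$ directly). Within each ball $B_M(p_k,r)$, pull back the points it contains through the inverse exponential map $\exp_{p_k}^{-1}$ to a subset of a $d_1$-dimensional Euclidean ball of radius comparable to $r$ in $T_{p_k}M \cong \mathbb{R}^{d_1}$. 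Now use a $d_1$-dimensional space-filling curve (Hölder continuous with exponent $1/d_1$, as in \citep{Steele1997.ch2,Buchin2008.ch2}): a standard space-filling-curve argument shows that finitely many points in a $d_1$-cube of side $s$ can be ordered along a path whose $d_1$-squared Euclidean length is $O(s^{d_1})$ — crucially, a bound that does not depend on how many points there are. Apply Lemma \ref{lem:regular.bounded.length} to transfer this path back up to $M \subset \mathbb{R}^m$: since $\exp_{p_k}$ is Lipschitz on $\mathcal{E}_k$ with constant $\sinh(\kappa_\ell R_k)/(\kappa_\ell R_k)$ (bounded once $r$ is a fixed multiple of $\tau_\ell \geq \tau_g$), the $d_1$-squared length of the image path is at most a constant times $r^{d_1}$. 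Thus each ball contributes $O(r^{d_1})$ to the $d_1$-squared length.

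Next, concatenate the $N$ sub-paths into a single path through all $n$ points by adding $N-1$ connecting edges between consecutive balls. Each connecting edge has Euclidean length at most the diameter of $M$, which is bounded by a constant times $K_I$ (since $M \subset I = [-K_I,K_I]^m$), so each connecting edge contributes at most $O(K_I^{d_1})$, hence the connectors contribute $O(N K_I^{d_1})$ in total. Summing, the $d_1$-squared length of this path, and therefore the TSP minimum in \eqref{eq:upper.TSPpath}, is at most $C \cdot N \cdot \max\{r^{d_1}, K_I^{d_1}\}$ for a constant $C = C_{d_1,m}$. Finally substitute the bound on $N$ from Lemma \ref{lem:regular.bounded.covernumber} together with the volume bound $vol(M) \leq C_{K_I,d_1,m}^{(\ref*{lem:regular.bounded.volume},2)}(1+\tau_g^{d_1-m})$ from Lemma \ref{lem:regular.bounded.volume}; with the choice $r \asymp \tau_g$ the factors of $r^{d_1}$ in $N$ and in the per-ball contribution cancel, leaving a bound of the form $C_{K_I,K_v,d_1,m}^{(\ref*{lem:upper.lowerdim})}(1+\tau_g^{d_1-m})$, as claimed.

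The main obstacle is the space-filling-curve step: one must make precise that a \emph{fixed} Hölder-continuous surjection $[0,1] \to [0,1]^{d_1}$ yields, for any finite point set, an ordering whose $d_1$-squared consecutive-distance sum is bounded purely in terms of the ambient cube size — this uses the Hölder inequality $\|f(t)-f(t')\|^{d_1} \leq C|t-t'|$ together with the fact that the preimages can be chosen in increasing order so that the parameter increments telescope to at most $1$. Handling boundary effects (points near $\partial M$, where the exponential map and local geodesic completeness from condition (3) of Definition \ref{def:regular.manifold} must be invoked carefully so that the pulled-back points genuinely lie in a Euclidean ball and the path stays on $M$) is the other delicate bookkeeping point, but it is routine given the regularity assumptions in place.
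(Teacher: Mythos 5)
Your decomposition (cover $M$ by $N$ geodesic balls of radius $r\asymp\tau_g$, space-filling curve inside each ball, concatenate) is the same one the paper uses, and your treatment of the \emph{within-ball} contribution—space-filling curve in $T_{p_k}M$, Lipschitz bound from Lemma \ref{lem:regular.bounded.length}, telescoping of the parameter increments to get $O(r^{d_1})$ per ball—is correct. But your bound on the \emph{connecting} edges does not close. You bound each connector crudely by $\mathrm{diam}(M)^{d_1}=O(K_I^{d_1})$, so the connectors contribute $O(N\,K_I^{d_1})$. With $r\asymp\tau_g$ and $vol_M(M)\leq C(1+\tau_g^{d_1-m})$, Lemma \ref{lem:regular.bounded.covernumber} gives $N\asymp vol_M(M)/\tau_g^{d_1}$, so for small $\tau_g$ your connector term is of order $K_I^{d_1}\tau_g^{d_1-m}/\tau_g^{d_1}=K_I^{d_1}\tau_g^{-m}$, which exceeds the claimed bound $C(1+\tau_g^{d_1-m})\asymp\tau_g^{d_1-m}$ by a factor $\tau_g^{-d_1}\to\infty$. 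The $r^{d_1}$ cancellation you invoke works only for the within-ball term $N\cdot r^{d_1}$; it does nothing for $N\cdot K_I^{d_1}$.

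The paper's fix is to order the ball centers $p_1,\dots,p_N$ along a \emph{second}, $m$-dimensional space-filling curve $\psi_m:[0,1]\to[-K_I,K_I]^m$ and choose the path to visit the balls in that order. Each hop between consecutive balls then has length controlled by $\|\psi_m(\Psi_m(p_{k+1}))-\psi_m(\Psi_m(p_k))\|\leq 4K_I\sqrt{m+3}\,|\Psi_m(p_{k+1})-\Psi_m(p_k)|^{1/m}$, and applying H\"older's inequality to $\sum_k|\Psi_m(p_{k+1})-\Psi_m(p_k)|^{d_1/m}$ yields a connector contribution of order $K_I\,N^{1-d_1/m}$ rather than $N\,K_I^{d_1}$. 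The exponent $1-d_1/m<1$ is precisely what makes the arithmetic work out: $N^{1-d_1/m}\lesssim\bigl(vol_M(M)/\tau_g^{d_1}\bigr)^{1-d_1/m}\lesssim\tau_g^{d_1-m}$ for small $\tau_g$, matching the claim. So the missing idea is that the ordering of the balls themselves requires a space-filling curve (in $\mathbb{R}^m$), not just the ordering of points inside each ball; without it you lose a factor of $\tau_g^{-d_1}$.
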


\begin{proof}[Proof of Lemma~\ref{lem:upper.lowerdim}]
in Appendix \ref{sec:proof.upper}.
\end{proof}

Proposition~\ref{prop:upper.maximumrisk} below is the main result of this subsection and
follows directly from Lemma
\ref{lem:upper.higherdim} and Lemma
\ref{lem:upper.lowerdim} above. 
Indeed, when the intrinsic dimension
is $d_{2}$, the risk of our estimator $\hat{d}_{n}$,
is of order
$O\left(n^{-\left(\frac{d_{2}}{d_{1}}-1\right)n}\right)$ by
Lemma~\ref{lem:upper.higherdim} and the union bound. On the other hand, when the
intrinsic dimension is $d_{1}$, the risk of our estimator
$\hat{d}_{n}$ is $0$, because of Lemma~\ref{lem:upper.lowerdim}.

\begin{prop}
	\label{prop:upper.maximumrisk}
	Fix $\tau_{g},\,\tau_{\ell}\in(0,\infty]$, $K_{I}\in[1,\infty)$,
	$K_{v}\in(0,2^{-m}]$, $K_{p}\in[(2K_{I})^{m},\infty)$,
	$d_{1},\,d_{2}\in\mathbb{N}$, with $\tau_{g}\leq\tau_{\ell}$ and
	$1\leq d_{1} < d_{2} \leq m$. Let $\hat{d}_{n}$ be in \eqref{eq:upper.dim.estimator}. Then either for $d=d_{1}$ or $d=d_{2}$,
	\begin{align*}
	& \underset{P\in\mathcal{P}_{\tau_{g},\tau_{\ell},K_{I},K_{v},K_{p}}^{d}}{\sup}
	\mathbb{E}_{P^{(n)}}\left[\ell\left(\hat{d}_{n},d(P)\right)\right] \\
	& \leq 1(d=d_{2})\left(C_{K_{I},K_{p},K_{v},m}^{(\ref*{prop:upper.maximumrisk})}\right)^{n}
	\max\left\{1,\tau_{g}^{-\left(\frac{d_{2}}{d_{1}}m+m-2d_{2}\right)n}\right\}n^{-\left(\frac{d_{2}}{d_{1}}-1\right)n},
	\end{align*}
	where $C_{K_{I},K_{p},K_{v},m}^{(\ref*{prop:upper.maximumrisk})}\in(0,\infty)$ is a constant depending only on $K_{I}$, $K_{p}$, $K_{v}$, and $m$.
\end{prop}
\begin{proof}[Proof of Proposition~\ref{prop:upper.maximumrisk}]
	in Appendix \ref{sec:proof.upper}.
\end{proof}

As described so far, the convergence analysis of our dimension estimator is probable. This is enough for our purpose, which is to quantify the statistical difficulties, in particular the minimax rate, of the dimension estimation problem. However, our $\widehat{d}_n$ in \eqref{eq:upper.dim.estimator} is not completely data-driven but depends on the model parameters $\tau_{g}$, $K_{I}$, and $K_{v}$. Hence the model on which our convergence analysis is valid depends on the model parameters. When it comes to applying our dimension estimator $\widehat{d}_n$ to real data, we need to estimate the constant $C_{K_{I},K_{v},m}^{(\ref*{lem:upper.lowerdim})}$. Proofs of Lemma~\ref{lem:upper.higherdim} and \ref{lem:upper.lowerdim} suggest that overestimating $C_{K_{I},K_{v},m}^{(\ref*{lem:upper.lowerdim})}$ by some constant factor doesn't deteriorate the convergence rate, so the constants $C_{K_{I},K_{v},m}^{(\ref*{lem:upper.lowerdim})}$ and $\tau_{g}$ can be replaced by any consistent estimators. Still, we have the difficulty of tuning the constant $C_{K_{I},K_{v},m}^{(\ref*{lem:upper.lowerdim})}$ and $\tau_{g}$. Also, the constant $C_{K_{I},K_{v},m}^{(\ref*{lem:upper.lowerdim})}$ is tuned to work for the worst case, so the practical performance of our dimension estimator is questionable.

\subsection{Minimax Upper Bound}
\label{subsec:upper.minimax}

As noted at the beginning of Section \ref{sec:upper}, the maximum risk of our estimator $\hat{d}_{n}$ in
	\eqref{eq:upper.maximumrisk} serves as an upper bound on the minimax risk $R_{n}$ in \eqref{eq:regular.minimax}.
Since we
assume that the intrinsic dimension is either $d_{1}$ or $d_{2}$,
Proposition~\ref{prop:upper.maximumrisk} yields that
the maximum risk of our estimator $\hat{d}_{n}$  is of order
$O\left(n^{-\left(\frac{d_{2}}{d_{1}}-1\right)n}\right)$. This
also serves as an upper bound of the minimax risk $R_{n}$, as in Proposition~\ref{prop:upper.bound}.

\begin{prop}
\label{prop:upper.bound}
Fix $\tau_{g},\,\tau_{\ell}\in(0,\infty]$, $K_{I}\in[1,\infty)$,
$K_{v}\in(0,2^{-m}]$, $K_{p}\in[(2K_{I})^{m},\infty)$,
$d_{1},\,d_{2}\in\mathbb{N}$, with $\tau_{g}\leq\tau_{\ell}$ and
$1\leq d_{1} < d_{2} \leq m$. Then
\begin{align*}
 & \underset{\hat{d}_{n}}{\inf}
\underset{P\in\mathcal{P}_{1}\cup\mathcal{P}_{2}}{\sup}
\mathbb{E}_{P^{(n)}}\left[\ell\left(\hat{d}_{n},d(P)\right)\right] \\
 & \leq \left(C_{K_{I},K_{p},K_{v},m}^{(\ref*{prop:upper.maximumrisk})}\right)^{n}
\max\left\{1,\tau_{g}^{-\left(\frac{d_{2}}{d_{1}}m+m-2d_{2}\right)n}\right\}n^{-\left(\frac{d_{2}}{d_{1}}-1\right)n},
\end{align*}
where  $C_{K_{I},K_{p},K_{v},m}^{(\ref*{prop:upper.maximumrisk})}$ is from Proposition~\ref{prop:upper.maximumrisk} and
\[
\mathcal{P}_1 =
\mathcal{P}_{\tau_{g},\tau_{\ell},K_{I},K_{v},K_{p}}^{d_{1}},\ \ \ 
\mathcal{P}_2 =
\mathcal{P}_{\tau_{g},\tau_{\ell},K_{I},K_{v},K_{p}}^{d_{2}}.
\]
\end{prop}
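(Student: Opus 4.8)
The plan is to bound the minimax risk by the worst-case risk of the single estimator $\hat{d}_{n}$ defined in \eqref{eq:upper.dim.estimator}, via the trivial inequality \eqref{eq:upper.maximumrisk.bound}, and then to analyze that worst-case risk separately over the $d_{1}$- and $d_{2}$-dimensional parts of $\mathcal{P}_{1}\cup\mathcal{P}_{2}$. If $d(P)=d_{1}$, then Lemma \ref{lem:upper.lowerdim} guarantees that the $d_{1}$-squared TSP length in \eqref{eq:upper.TSPpath} never exceeds the threshold $C_{K_{I},K_{v},d_{1},m}^{(\ref*{lem:upper.lowerdim})}(1+\tau_{g}^{d_{1}-m})$ used by $\hat{d}_{n}$, so $\hat{d}_{n}=d_{1}$ with probability one and the risk is exactly $0$. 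Hence $\sup_{P\in\mathcal{P}_{1}\cup\mathcal{P}_{2}}\mathbb{E}_{P^{(n)}}[\ell(\hat{d}_{n},d(P))]=\sup_{P\in\mathcal{P}_{2}}P^{(n)}[\hat{d}_{n}=d_{1}]$, since in the binary setting $\hat{d}_{n}$ takes only the values $d_{1}$ and $d_{2}$, and it remains to control $P^{(n)}[\hat{d}_{n}=d_{1}]$ for $P\in\mathcal{P}_{2}$.

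Next I would pass from the TSP length, a minimum over the $n!$ orderings of the data, to the fixed-order path length that Lemma \ref{lem:upper.higherdim} controls. Writing $L:=C_{K_{I},K_{v},d_{1},m}^{(\ref*{lem:upper.lowerdim})}(1+\tau_{g}^{d_{1}-m})$, the event $\{\hat{d}_{n}=d_{1}\}$ is $\{\min_{\sigma\in S_{n}}\sum_{i=1}^{n-1}\|X_{\sigma(i+1)}-X_{\sigma(i)}\|^{d_{1}}\leq L\}$, so a union bound over $\sigma\in S_{n}$ together with the fact that $X_{1},\dots,X_{n}$ are i.i.d.\ (hence each permuted sum has the same law as the original-order sum) gives
\[
P^{(n)}\left[\hat{d}_{n}=d_{1}\right]\ \leq\ n!\;P^{(n)}\left[\sum_{i=1}^{n-1}\|X_{i+1}-X_{i}\|^{d_{1}}\leq L\right].
\]
Applying Lemma \ref{lem:upper.higherdim} to the right-hand side and using $n!/(n-1)!=n$ then yields
\[
P^{(n)}\left[\hat{d}_{n}=d_{1}\right]\ \leq\ n\left(C_{K_{I},K_{p},d_{1},d_{2},m}^{(\ref*{lem:upper.higherdim})}\right)^{n-1}L^{\frac{d_{2}}{d_{1}}(n-1)}\left(1+\tau_{g}^{(d_{2}-m)(n-1)}\right)(n-1)^{-\left(\frac{d_{2}}{d_{1}}-1\right)(n-1)}.
\]

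The final step is bookkeeping to reach the stated form. Substituting the value of $L$ introduces the factor $(1+\tau_{g}^{d_{1}-m})^{\frac{d_{2}}{d_{1}}(n-1)}$, which I would combine with $(1+\tau_{g}^{(d_{2}-m)(n-1)})$ using the elementary bounds $(1+\tau_{g}^{a})^{b}\leq 2^{b}(1+\tau_{g}^{ab})$ and $(1+\tau_{g}^{a_{1}})(1+\tau_{g}^{a_{2}})\leq 4(1+\tau_{g}^{a_{1}+a_{2}})$, valid for $a,a_{1},a_{2}\leq 0$ and $b\geq 0$ (checking $\tau_{g}\geq 1$ and $\tau_{g}<1$ separately). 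Since $(d_{1}-m)\tfrac{d_{2}}{d_{1}}+(d_{2}-m)=-(\tfrac{d_{2}}{d_{1}}m+m-2d_{2})$ and the exponent $\alpha:=\tfrac{d_{2}}{d_{1}}m+m-2d_{2}$ is strictly positive (because $\tfrac{d_{2}}{d_{1}}>1$ and $m\geq d_{2}$), the $\tau_{g}$-dependence collapses to $C^{n}(1+\tau_{g}^{-\alpha(n-1)})\leq 2C^{n}(1+\tau_{g}^{-\alpha n})$, again by the two-regime argument. The remaining $n$-dependence is handled by $n(n-1)^{-(\frac{d_{2}}{d_{1}}-1)(n-1)}\leq C^{n}n^{-(\frac{d_{2}}{d_{1}}-1)n}$, which follows from $(1+\tfrac{1}{n-1})^{(\frac{d_{2}}{d_{1}}-1)(n-1)}\leq e^{\frac{d_{2}}{d_{1}}-1}$ and a crude bound such as $n^{d_{2}/d_{1}}\leq e^{n}$, and the constant powers $(C_{K_{I},K_{p},d_{1},d_{2},m}^{(\ref*{lem:upper.higherdim})})^{n-1}$ and $(C_{K_{I},K_{v},d_{1},m}^{(\ref*{lem:upper.lowerdim})})^{\frac{d_{2}}{d_{1}}(n-1)}$ are absorbed into the final constant $C_{K_{I},K_{p},K_{v},d_{1},d_{2},m}^{(\ref*{prop:upper.bound})}$. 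I expect the main obstacle to be the union-bound reduction from the permutation-minimized TSP length to the fixed-order path length of Lemma \ref{lem:upper.higherdim} — in particular making sure the $n!$ factor is genuinely tamed by the $(n-1)!$ in the denominator there; the two-regime manipulation of the $\tau_{g}$ powers needed to reach the clean exponent $\tfrac{d_{2}}{d_{1}}m+m-2d_{2}$ is the only other point demanding care.
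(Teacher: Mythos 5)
Your proposal follows the paper's proof essentially verbatim: you use the estimator from \eqref{eq:upper.dim.estimator}, kill the $\mathcal{P}_1$ risk with Lemma \ref{lem:upper.lowerdim}, apply the union bound over $\sigma\in S_n$ together with Lemma \ref{lem:upper.higherdim} on the fixed-order path (so that the $n!$ from the union bound is cancelled against the $(n-1)!$ in that lemma's denominator, leaving the factor $n$), and then absorb the remaining $\tau_g$ powers and $n$-dependence into the final constants. Your bookkeeping of the $\tau_g$ exponents and the step $n(n-1)^{-(\frac{d_2}{d_1}-1)(n-1)}\leq C^n n^{-(\frac{d_2}{d_1}-1)n}$ is correct and is actually spelled out in slightly more detail than in the paper.
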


\begin{proof}[Proof of Proposition~\ref{prop:upper.bound}]
in Appendix \ref{sec:proof.upper}.
\end{proof}

\section{Lower Bound for Choosing Between Two Dimensions}
\label{sec:lower}

The goal of this section is to derive a lower bound for the minimax rate $R_{n}$.
As in Section
\ref{sec:upper}, we fix $d_{1},\,d_{2}\in\mathbb{N}$ with $1\leq
d_{1}<d_{2}\leq m$, and assume that the intrinsic dimension of data is either $d_{1}$ or $d_{2}$ as in
\eqref{eq:regular.distribution.binary}. This assumption is relaxed 
in Section \ref{sec:multidimension}.
All the proofs for this section are in Section \ref{sec:proof.lower}.

Our strategy is to find a subset $T\subset
I^{n}\subset (\mathbb{R}^{d})^{n}$ and two sets of distributions
$\mathcal{P}_{1}^{d_{1}}$ and $\mathcal{P}_{2}^{d_{2}}$ with 
dimensions $d_{1}$
and $d_{2}$, such that $\mathcal{P}_{1}^{d_{1}}$ and
$\mathcal{P}_{2}^{d_{2}}$ satisfy the regularity conditions in
Definition \ref{def:regular.manifold}, and whenever the sample
$X=(X_{1},\ldots,X_{n})$ lies on $T$, one cannot easily distinguish
whether the underlying distribution is from
$\mathcal{P}_{1}^{d_{1}}$ or $\mathcal{P}_{2}^{d_{2}}$. 

After constructing $T$, $\mathcal{P}_{1}^{d_{1}}$ and $\mathcal{P}_{2}^{d_{2}}$,
we derive the lower bound using
the following result, known as Le Cam's lemma.

\begin{lem} 
\label{lem:lower.LeCam}
{\em (Le Cam's Lemma)} Let $\mathcal{P}$ be a set of probability measures on
$(\Omega,\mathcal{F})$, and
$\mathcal{P}_{1},\mathcal{P}_{2}\subset\mathcal{P}$ be such that for
all $P\in\mathcal{P}_{i}$, $\theta(P)=\theta_{i}$ for $i=1,2$. For any
$Q_{i}\in co(\mathcal{P}_{i})$, where $co(\mathcal{P}_{i})$ is the convex hull of $\mathcal{P}_{i}$, let $q_{i}$ be the density of $Q_{i}$ with respect to a measure $\nu$. Then 
\begin{equation}
\label{eq:lower.lecam}
\underset{\hat{\theta}}{\inf}\underset{P\in\mathcal{P}}{\sup}\mathbb{E}_{P}[\ell(\hat{\theta},\theta(P))]\geq\frac{\ell(\theta_{1},\theta_{2})}{2}\int[q_{1}(x)\wedge q_{2}(x)]d\nu(x).
\end{equation}
\end{lem}
\begin{proof}[Proof of Lemma~\ref{lem:lower.LeCam}]
\citep[See][Chapter 29.2, Lemma 1]{Yu1997}.
\end{proof}

In above Le Cam's lemma, considering the convex hull of distributions $co(\mathcal{P}_{i})$ is critical for getting the nontrivial lower bound. Suppose we are using the basic version of Le Cam's lemma where the convex hull is not considered, i.e. $Q_i \in \mathcal{P}_{i}$. Then for two distributions $Q_{1}$ and $Q_{2}$ respectively from our $d_{1}$ and $d_{2}$ dimensional model $\mathcal{P}^{d_{1}}_{\tau_{g},\tau_{l},K_{I},K_{v},K_{p}}$ and $\mathcal{P}^{d_{2}}_{\tau_{g},\tau_{l},K_{I},K_{v},K_{p}}$, $Q_{1}$ and $Q_{2}$ are singular to each other; i.e. $q_{1}(x)\wedge q_{2}(x) = 0$ for all $x$. Hence no matter which subset $\mathcal{P}_{1}$ and $\mathcal{P}_{2}$ we choose with $d(\mathcal{P}_{1})=d_{1}$ and $d(\mathcal{P}_{2})=d_{2}$, the lower bound in \eqref{eq:lower.lecam} will be always $0$. This trivial bound can be improved by considering the convex hull of distributions $co(\mathcal{P}_{i})$ in Le Cam's lemma.

Our construction for $T$, $\mathcal{P}_{1}^{d_{1}}$, and
$\mathcal{P}_{2}^{d_{2}}$ is based on mimicking a space-filling
curve. Intuitively, this gives the lower bound
since it is difficult to differentiate a
space-filling curve and a higher dimensional
cube. In detail, we set 
\begin{align}
\label{eq:lower.distribution.lower}
\mathcal{P}_{1}^{d_{1}}=\{ & \text{distributions supported on}\nonumber\\
&\text{a space-filling-curve like }d_{1}\text{-dimensional manifold}\},
\end{align}
and
\begin{equation}
\label{eq:lower.distribution.upper}
\mathcal{P}_{2}^{d_{2}}=\{\text{uniform distributions on }[-K_{I},\,K_{I}]^{d_{2}}\}.
\end{equation}
To apply Le Cam's lemma, we construct a set $T\subset
I^{n}$  so that, whenever $X=(X_{1},\ldots,X_{n})\in{T}$, we cannot distinguish whether $X$ is
from $\mathcal{P}_{1}^{d_{1}}$ in \eqref{eq:lower.distribution.lower}
or $\mathcal{P}_{1}^{d_{2}}$ in
\eqref{eq:lower.distribution.upper}. Then, for an  appropriately chosen
distribution $Q_{1}$ in the convex hull of $\mathcal{P}_{1}^{d_{1}}$ with
density $q_{1}$ with respect to Lebesgue measure $\lambda$ on the cube $[-K_{I},K_{I}]^{d_{2}}$, and
a density $q_{2}$ from the class
$\mathcal{P}_{2}^{d_{2}}$, $\int_{T}[q_{1}(x)\wedge
  q_{2}(x)]d\lambda(x)$ is a lower bound on the minimax rate $R_{n}$ in
\eqref{eq:regular.minimax}. Indeed, from Le Cam's Lemma~\ref{lem:lower.LeCam}, we have that
\begin{align}
\underset{\hat{\theta}}{\inf}\underset{P\in\mathcal{P}}{\sup}\mathbb{E}_{P}[\ell(\hat{\theta},\theta(P))]& \geq\frac{1}{2}\int[q_{1}(x)\wedge q_{2}(x)]d\lambda(x) \nonumber\\
& \geq\frac{1}{2}\int_{T}[q_{1}(x)\wedge q_{2}(x)]d\lambda(x).
\label{eq:lower.integrationT}
\end{align}

For constructing the class $\mathcal{P}_{1}^{d_{1}}$ in
\eqref{eq:lower.distribution.lower}, it will be sufficient to consider the case
$d_{1} = 1$.
In fact, Lemma
\ref{lem:lower.cylinder.regularity} states that the regularity
conditions in Definition \ref{def:regular.manifold} are still
preserved when the manifold $M$ is  a Cartesian product with a cube
$[-K_{I},\,K_{I}]^{\Delta d}$, as in Figure \ref{fig:lower.product}. 
Hence for constructing a $d$-dimensional ``space-filling" manifold, we first construct a $1$-dimensional space-filling curve satisfying the required regularity conditions, and then we form a Cartesian product with a cube of dimension $d-1$, which becomes a $d$-dimensional manifold satisfying the same regularity conditions by Lemma~\ref{lem:lower.cylinder.regularity}.

\begin{figure}
	\begin{center}
		\includegraphics{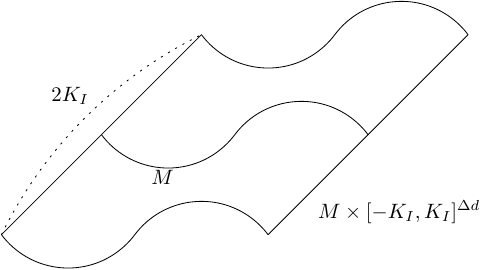}
	\end{center}
	\caption{The regularity conditions in Definition \ref{def:regular.manifold} are still preserved under the Cartesian product with a cube $[-K_{I},\,K_{I}]^{\Delta d}$. Detailed explanations are in Figure \ref{fig:proof.lower.product}.}
	\label{fig:lower.product}
\end{figure}

\begin{lem}
\label{lem:lower.cylinder.regularity}
Fix $\tau_{g},\,\tau_{\ell}\in(0,\infty]$,
$K_{I}\in[1,\infty)$, $K_{v}\in(0,2^{-m}]$, $d,\,\Delta
d\in\mathbb{N}$, with $\tau_{g}\leq\tau_{\ell}$ and $1\leq d+\Delta d
\leq m$. Let $M\in\mathcal{M}_{\tau_{g},\tau_{\ell},K_{I},K_{v}}^{d}$
be a $d$-dimensional manifold of global reach $\geq\tau_{g}$, local
reach $\geq\tau_{\ell}$, which is embedded in $\mathbb{R}^{m-\Delta d}$.
Then
\[
M\times[-K_{I},K_{I}]^{\Delta d}\in\mathcal{M}_{\tau_{g},\tau_{\ell},K_{I},K_{v}}^{d+\Delta d},
\]
which is embedded in $\mathbb{R}^{m}$.\end{lem}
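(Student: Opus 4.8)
The plan is to verify, one at a time, the four conditions in Definition \ref{def:regular.manifold} defining $\mathcal{M}_{\tau_{g},\tau_{\ell},K_{I},K_{v}}^{d+\Delta d}$ for the product $M':=M\times C$ with $C:=[-K_{I},K_{I}]^{\Delta d}$, viewed as a subset of $\mathbb{R}^{m-\Delta d}\times\mathbb{R}^{\Delta d}=\mathbb{R}^{m}$. Condition (1) is immediate: $M\subset[-K_{I},K_{I}]^{m-\Delta d}$ forces $M'\subset[-K_{I},K_{I}]^{m}=I$, and $M'$ is a compact $(d+\Delta d)$-dimensional manifold since dimension adds under products. The two auxiliary facts I would set up first and then reuse are: (i) for closed sets $A\subset\mathbb{R}^{k_{1}}$, $B\subset\mathbb{R}^{k_{2}}$ one has $\tau(A\times B)\geq\min\{\tau(A),\tau(B)\}$ (with equality in fact), because $dist_{\mathbb{R}^{k_{1}+k_{2}}}((x,y),A\times B)^{2}=dist_{\mathbb{R}^{k_{1}}}(x,A)^{2}+dist_{\mathbb{R}^{k_{2}}}(y,B)^{2}$ and the nearest point in $A\times B$ is the pair of coordinate projections, which are unique whenever the two coordinate distances stay below $\tau(A)$, $\tau(B)$; in particular $\tau(C)=\infty$ since $C$ is closed and convex. (ii) The intrinsic distance on $M'$ is the Pythagorean combination $dist_{M'}((a_{1},b_{1}),(a_{2},b_{2}))=(dist_{M}(a_{1},a_{2})^{2}+dist_{C}(b_{1},b_{2})^{2})^{1/2}$: ``$\geq$'' follows from the integral triangle inequality applied to $\|\gamma'\|=(\|\alpha'\|^{2}+\|\beta'\|^{2})^{1/2}$ for a path $\gamma=(\alpha,\beta)$ in $M'$, and ``$\leq$'' by running near-optimal paths on $M$ and $C$ simultaneously after reparametrizing each proportionally to arclength. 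From (ii) it follows that the constant-speed reparametrization of $(\alpha,\beta)$ is a geodesic of $M'$ whenever $\alpha,\beta$ are geodesics of $M,C$, that $B_{M'}((p,c),r)\subset B_{M}(p,r)\times B_{C}(c,r)$, and that in fact $B_{M'}((p,c),r)=\{(a,b)\in M\times C:dist_{M}(a,p)^{2}+\|b-c\|^{2}<r^{2}\}$.

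Granting (i)--(ii), conditions (2) and (3) are short. By (i), $\tau(M')=\min\{\tau(M),\tau(C)\}=\tau(M)\geq\tau_{g}$; and if $U_{p}\subset M$ is a neighbourhood of $p$ with $\tau(U_{p})\geq\tau_{\ell}$, then $U_{p}\times C$ is a neighbourhood of $(p,c)$ in $M'$ with $\tau(U_{p}\times C)=\min\{\tau(U_{p}),\infty\}\geq\tau_{\ell}$, which gives the local reach bound. For condition (3), note ${\rm int}(M')={\rm int}(M)\times{\rm int}(C)$; given $(p,c)\in{\rm int}(M')$ and two points of $B_{M'}((p,c),2\sqrt{3}\tau_{g})$ lying in ${\rm int}(M')$, their $M$-coordinates lie in $B_{M}(p,2\sqrt{3}\tau_{g})$ and, being interior, are joined by a geodesic inside ${\rm int}(M)$ by condition (3) for $M$, while their $C$-coordinates are joined by the straight segment, which stays in ${\rm int}(C)$ because the interior of a convex set is convex; the constant-speed product of the two is then a geodesic of $M'$ with image in ${\rm int}(M')$.

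The one substantive step is condition (4). Using that the $(d+\Delta d)$-dimensional Hausdorff measure on $M'$ is the product of the $d$-dimensional Hausdorff measure on $M$ with Lebesgue measure on $C$ (valid because the two factors occupy orthogonal coordinate subspaces), Fubini applied to the exact description of $B_{M'}((p,c),r)$ from (ii) gives, for $(p,c)\in M'$ and $r\leq\sqrt{3}\tau_{g}$,
\[
vol_{M'}(B_{M'}((p,c),r))=\int_{B_{C}(c,r)}vol_{M}\bigl(B_{M}(p,\sqrt{r^{2}-\|b-c\|^{2}})\bigr)\,db\ \geq\ K_{v}\,\omega_{d}\int_{B_{C}(c,r)}\bigl(r^{2}-\|b-c\|^{2}\bigr)^{d/2}\,db,
\]
the inequality being condition (4) for $M$ (applicable since $\sqrt{r^{2}-\|b-c\|^{2}}\leq r\leq\sqrt{3}\tau_{g}$). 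The cube integral is smallest when $c$ is a corner of $C$, where $B_{C}(c,\rho)$ is an orthant of the Euclidean $\rho$-ball; a short polar-coordinate computation on $\mathbb{R}^{\Delta d}$, simplified with $\Delta d\,\omega_{\Delta d}\Gamma(\Delta d/2)=2\pi^{\Delta d/2}$ and $\omega_{d}\Gamma(d/2+1)=\pi^{d/2}$, collapses the Gamma factors into $\omega_{d+\Delta d}$ and yields $vol_{M'}(B_{M'}((p,c),r))\geq K_{v}\,2^{-\Delta d}\,\omega_{d+\Delta d}\,r^{d+\Delta d}$. The main obstacle, and the only place requiring care, is to absorb the extra $2^{-\Delta d}$ so that the constant is exactly $K_{v}$: here one uses $K_{v}\leq 2^{-m}$ and $d+\Delta d\leq m$ together with the sharper volume lower bound (with constant $\geq 2^{-d}$ rather than $K_{v}$) available for boundaryless $M$ by the remark following Definition \ref{def:regular.manifold}, and in particular for the space-filling curves used in Section \ref{sec:lower}, which makes the product of the two factor constants at least $2^{-d}\cdot 2^{-\Delta d}=2^{-(d+\Delta d)}\geq 2^{-m}\geq K_{v}$. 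Conditions (1)--(3) and the reach bookkeeping are otherwise routine once (i) and (ii) are in place.
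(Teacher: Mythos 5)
Your argument for condition (2) takes essentially the paper's route — decompose the squared Euclidean distance into the two coordinate blocks and observe that the minimizer over $M\times C$ is the pair of factor projections — but you get two details right that the paper fudges: the paper writes the decomposition as a sum of distances rather than a sum of squared distances, and it identifies $\pi_{M\times C}(x)$ with $\bigl(\pi_{M}(\Pi_{1:m-\Delta d}(x)),\,\Pi_{(m-\Delta d+1):m}(x)\bigr)$ without projecting the second block onto $C$; your version, using $\pi_{C}$ explicitly and noting $\tau(C)=\infty$, is the correct one. Your brief arguments for (1) and (3) fill in steps the paper declares ``rather obvious.''

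The substantive difference is condition (4), which the paper's proof does not address at all. You carry out the Fubini computation honestly and arrive at
\[
vol_{M'}\bigl(B_{M'}((p,c),r)\bigr)\;\geq\;K_{v}\,2^{-\Delta d}\,\omega_{d+\Delta d}\,r^{d+\Delta d}
\]
at a corner $c$ of $C$, which is strictly below the target $K_{v}\,\omega_{d+\Delta d}\,r^{d+\Delta d}$ whenever $\Delta d\geq1$. This is a genuine finding: the lemma's hypothesis gives only $vol_{M}(B_{M}(p,\rho))\geq K_{v}\rho^{d}\omega_{d}$, and that alone does not imply (4) for $M\times C$ with the \emph{same} $K_{v}$. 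Your proposed repair — replace $K_{v}$ by $2^{-d}$ in the factor bound for $M$, then use $2^{-d}\cdot2^{-\Delta d}=2^{-(d+\Delta d)}\geq2^{-m}\geq K_{v}$ — is arithmetically sound, but it imports a hypothesis not present in the lemma: the $2^{-d}$ constant is established in the remark only for \emph{boundaryless} $M$, whereas the curves $\mathscr{M}(Y_1,\dots,Y_n)$ built in Lemma \ref{lem:lower.constructT} are segments-and-arcs with endpoints (so the remark does not literally apply, though the $\frac{1}{2}\geq2^{-d}$ constant does hold there by direct inspection). The upshot: your proof correctly exposes that condition (4) is not obvious for general $M\in\mathcal{M}^{d}_{\tau_g,\tau_\ell,K_I,K_v}$, and your fix works for the specific manifolds the paper actually feeds into this lemma, but it does not prove the lemma at the stated level of generality. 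Since the paper's own proof says nothing about (4), you have caught a real gap in the paper rather than introduced one of your own; it would be worth making explicit whether the intended reading of the lemma restricts to manifolds satisfying the sharper $2^{-d}$ volume bound (as the applications in Section \ref{sec:lower} do).
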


\begin{proof}[Proof of Lemma~\ref{lem:lower.cylinder.regularity}]
in Appendix \ref{sec:proof.lower}.
\end{proof}

The precise construction of $\mathcal{P}_{1}^{d_{1}}$ in
\eqref{eq:lower.distribution.lower} and $T$ is detailed in Lemma~\ref{lem:lower.constructT}.
As in Figure \ref{fig:lower.Tconstruct}, we construct $T_{i}$'s that are
cylinder sets aligned 
as a zigzag in $[-K_{I},K_{I}]^{d_{2}}$, and then
$T$ is constructed as
$T=S_{n}\overset{n}{\underset{i=1}{\prod}}T_{i}$, where the
permutation group $S_{n}$ acts on
$\overset{n}{\underset{i=1}{\prod}}T_{i}$ as a coordinate
change.
Then, we show below that, for
any $x\in\prod T_{i}$, there exists a manifold
$M\in \mathcal{M}_{\tau_{g},\tau_{\ell},K_{I},K_{v}}^{d_{1}}$ that passes
through $x_{1},\ldots,x_{n}$. 
The class
$\mathcal{P}_{1}^{d_{1}}$ in \eqref{eq:lower.distribution.lower}
is finally defined as the set of distributions that are supported on such a
manifold.

\begin{figure}
	\begin{center}
		\begin{subfigure}[b]{0.5\textwidth}		
			\begin{center}
			\includegraphics[scale=0.95]{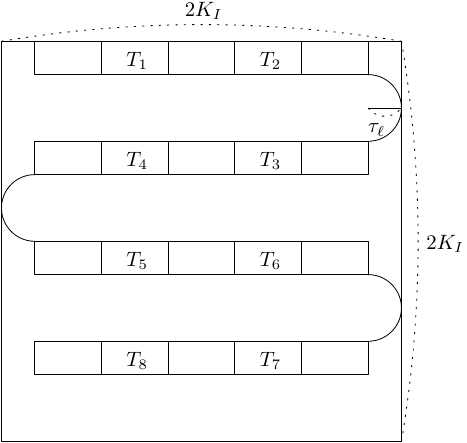}
			\end{center}
			\caption{alignment of $T_{i}$}
			\label{subfig:lower.ARTconstruct}
		\end{subfigure}
		\begin{subfigure}[b]{0.45\textwidth}
			\begin{center}
			\includegraphics[scale=0.95]{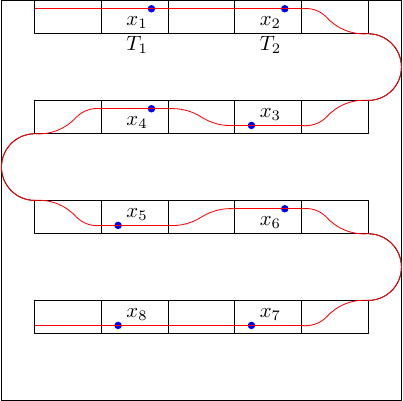}
			\end{center}
			\caption{manifold passing through $x_{i}$'s}
			\label{subfig:lower.Tconstruct.manifold}
		\end{subfigure}
	\end{center}
	
	\caption{This figure illustrates the case where $d_{1}=1$ and $d_{2}=2$. \protect\subref{subfig:lower.ARTconstruct}
		shows how $T_{i}$'s are aligned in a zigzag.
		\protect\subref{subfig:lower.Tconstruct.manifold} shows for given $x_{1}\in T_{1},\ldots,x_{n}\in
		T_{n}$(represented as blue points), how a manifold with regularity conditions(represented as a red curve) passes through $x_{1},\ldots,x_{n}$. Detailed constructions in Figure \ref{fig:proof.lower.Tconstruct}.}
	\label{fig:lower.Tconstruct}
\end{figure}


\begin{lem}
\label{lem:lower.constructT}
Fix $\tau_{\ell}\in(0,\infty]$, $K_{I}\in[1,\infty)$,
$d_{1},\,d_{2}\in\mathbb{N}$, with $1\leq d_{1}\leq d_{2}$, and
suppose  $\tau_{\ell} < K_{I}$. Then there exist
$T_{1},\ldots,T_{n}\subset[-K_{I},K_{I}]^{d_{2}}$ such that:

(1) The $T_{i}$'s are distinct.

(2) For each $T_{i}$, there exists an isometry $\Phi_{i}$ such that
\[
T_{i}=\Phi_{i}\left([-K_{I},K_{I}]^{d_{1}-1}\times[0,a]\times B_{\mathbb{R}^{d_{2}-d_{1}}}(0,w)\right),
\]
where $c=\left\lceil \frac{K_{I}+\tau_{\ell}}{2\tau_{\ell}}\right\rceil $,  $a=\frac{K_{I}-\tau_{\ell}}{\left(d_{2}-d_{1}+\frac{1}{2}\right)\left\lceil \frac{n}{c^{d_{2}-d_{1}}}\right\rceil }$,
and  $w=\min\left\{ \tau_{\ell},\ \frac{(d_{2}-d_{1})^{2}(K_{I}-\tau_{\ell})^{2}}{2\tau_{\ell}\left(d_{2}-d_{1}+\frac{1}{2}\right)^{2}\left(\left\lceil \frac{n}{c^{d_{2}-d_{1}}}\right\rceil +1\right)^{2}}\right\} $.

(3)There exists $\mathscr{M}:\left(B_{\mathbb{R}^{d_{2}-d_{1}}}(0,w)\right)^{n}\rightarrow\mathcal{M}_{\tau_{g},\tau_{\ell},K_{I},K_{v}}^{d_{1}}$
one-to-one such that for each $y_{i}\in B_{\mathbb{R}^{d_{2}-d_{1}}}(0,w)$, $1\leq i\leq n$, $\mathscr{M}(y_{1},\ldots,y_{n})\cap T_{i}=\Phi_{i}([-K_{I},K_{I}]^{d_{1}-1}\times[0,a]\times\{y_{i}\})$.
Hence for any $x_{1}\in T_{1},\ldots,x_{n}\in T_{n}$, $\mathscr{M}(\{\Pi_{(d_{1}+1):d_{2}}^{-1}\Phi_{i}^{-1}(x_{i})\}_{1\leq i\leq n})$
passes through $x_{1},\ldots,x_{n}$.
\end{lem}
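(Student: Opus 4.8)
The plan is to build one fixed ``serpentine skeleton'' of $n$ straight thin tubes inside $I=[-K_I,K_I]^{d_2}$, take $T_1,\dots,T_n$ to be these tubes, and then, for each vector of transverse offsets $Y=(Y_1,\dots,Y_n)$, thread through the skeleton a single smooth $d_1$-manifold $\mathscr M(Y)$ which is a flat slice inside each tube and interpolates smoothly in the gaps. The first move is a dimension reduction: every set in sight is a Cartesian product of $[-K_I,K_I]^{d_1-1}$ with a subset of $\mathbb R^{D}$, $D:=d_2-d_1+1$, and by Lemma~\ref{lem:lower.cylinder.regularity} the factor $[-K_I,K_I]^{d_1-1}$ preserves membership in the classes $\mathcal M^{\bullet}_{\tau_g,\tau_\ell,K_I,K_v}$; so it suffices to construct a curve $\gamma_Y\subset[-K_I,K_I]^{D}$ with $\gamma_Y\in\mathcal M^{1}_{\tau_g,\tau_\ell,K_I,K_v}$ and set $\mathscr M(Y):=\gamma_Y\times[-K_I,K_I]^{d_1-1}$, the model tube being $[0,a]\times B_{\mathbb R^{D-1}}(0,w)$ and $\Phi_i$ a rigid motion of $\mathbb R^{d_2}$ fixing the first $d_1-1$ coordinates.

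Next I build the skeleton. Partition the transverse cube $[-K_I,K_I]^{D-1}$ into $c^{D-1}$ congruent sub-cubes; the choice $c=\lceil(K_I+\tau_\ell)/2\tau_\ell\rceil$ makes each sub-cube have side $\asymp\tau_\ell$, large enough to house turn-arcs of radius $\ge\tau_\ell$ between adjacent sub-cubes. Distribute the tubes evenly, $N:=\lceil n/c^{D-1}\rceil$ per sub-cube, and in each sub-cube place $N$ collinear axis-parallel straight tubes of length $a$ near the sub-cube centre, separated by short gaps, all within a $\tau_\ell$-collar of $\partial I$ (here the hypothesis $2\tau_\ell\le K_I$ is used, to leave a positive usable length $K_I-\tau_\ell$). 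Fitting the $N$ tubes plus their gaps into $K_I-\tau_\ell$ forces the stated $a$. Finally $w$ is the largest radius for which two consecutive tubes, whose slices at heights $Y_i,Y_{i+1}$ with $\|Y_i\|,\|Y_{i+1}\|\le w$ must be joined by a $C^\infty$ connector of curvature $\le1/\tau_\ell$ confined to the gap between them, can in fact be so joined; the elementary curvature-versus-displacement estimate over a length $\sim a$ yields $w=O(a^2/\tau_\ell)$, which is the stated bound (capped at $\tau_\ell$). Setting $T_i:=\Phi_i([-K_I,K_I]^{d_1-1}\times[0,a]\times B_{\mathbb R^{D-1}}(0,w))$, the tubes occupy disjoint axis-intervals within a sub-cube and distinct sub-cubes are far apart, so the $T_i$ are pairwise disjoint, which is (1); and (2) holds by the definition of $\Phi_i$.

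Given $Y$ with $Y_i\in B_{\mathbb R^{d_2-d_1}}(0,w)$, let $\gamma_Y$ agree inside the $i$-th model tube with the straight slice at transverse height $Y_i$, inside each gap and at the two ends with a $C^\infty$ connector (the nominal $Y=0$ connector perturbed by a bump of amplitude $\le2w$ over length $\sim a$, hence of curvature $\le1/\tau_\ell$), and between sub-cubes with turn-arcs of radius $\ge\tau_\ell$. Then $\gamma_Y$ is a compact $C^\infty$ curve with two endpoints, $\mathscr M(Y)=\gamma_Y\times[-K_I,K_I]^{d_1-1}$ is a compact $C^\infty$ $d_1$-manifold with boundary, and $\mathscr M(Y)\cap T_i$ is exactly the claimed slice; $\mathscr M$ is one-to-one because distinct $Y$'s give slices at distinct heights inside some $T_i$, and the ``hence'' clause is immediate since $\Pi_{(d_1+1):d_2}\Phi_i^{-1}(X_i)$ is precisely the transverse height of $X_i$ inside $T_i$, so $\mathscr M$ evaluated at these heights contains the slice through each $X_i$.

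It remains to check, uniformly in $Y$, that $\gamma_Y\in\mathcal M^{1}_{\tau_g,\tau_\ell,K_I,K_v}$, whereupon Lemma~\ref{lem:lower.cylinder.regularity} gives $\mathscr M(Y)\in\mathcal M^{d_1}_{\tau_g,\tau_\ell,K_I,K_v}$: containment in $[-K_I,K_I]^{D}$ from the $\tau_\ell$-collar and the $w$-bound; local reach $\ge\tau_\ell$ from the uniform curvature bound $\le1/\tau_\ell$ (built in with a margin) together with the absence of self-approach on small patches; local geodesic completeness automatically, since any sub-arc of $\gamma_Y$ stays in $\gamma_Y$; and the essential-volume-dimension bound with the given $K_v$ because any geodesic ball of radius $r\le\sqrt3\tau_g$ has length at least $r=\tfrac12 r\omega_1\ge K_v r\omega_1$, as $K_v\le 2^{-m}\le\tfrac12$. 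The \textbf{main obstacle} is the global reach $\tau(\gamma_Y)\ge\tau_g$, which must hold for \emph{every} $n$ even though $a,w\to0$: the point is that the coarse shape of $\gamma_Y$ --- the $c^{D-1}$ nearly-straight strands at the sub-cube centres and the $\ge\tau_\ell$-radius turn-arcs joining them --- is $n$-independent and stays $\gtrsim\tau_\ell$-separated from itself, while the only $n$-dependent features are the $O(w)=o(\tau_\ell)$ wiggles in the gaps, far too small to create a second nearest point; verifying that no extrinsically-close-but-intrinsically-far pair of arcs comes within $2\tau_g$, and fixing the numerical constants so that the coarse separation and turn radii genuinely beat the relevant multiples of $\tau_g$, is the technical heart of the argument.
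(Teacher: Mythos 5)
Your plan is essentially the paper's: reduce to $d_1=1$ via Lemma~\ref{lem:lower.cylinder.regularity}, build a serpentine arrangement of $n$ thin straight tubes inside a $\tau_\ell$-collared copy of $I$, thread through them a curve that is a flat slice at height $Y_i$ inside tube $T_i$ and is joined across gaps by curvature-$\le 1/\tau_\ell$ connectors, and take $\mathscr M(Y)=\gamma_Y\times[-K_I,K_I]^{d_1-1}$. The one substantive presentational difference is the connector: you invoke generic $C^\infty$ bumps with the $w=O(a^2/\tau_\ell)$ amplitude bound, while the paper writes out an explicit $C^1$, piecewise-$C^2$ connector consisting of two circular arcs of radius $\tau_\ell$ (the angle $t_0$ is solved for in closed form) joined by their common tangent segment, which makes the curvature bound exact rather than asymptotic; you are also slightly more systematic in ticking off all four conditions of Definition~\ref{def:regular.manifold}, whereas the paper only spells out global reach, and both treatments verify the global reach condition at roughly the same level of (in)completeness, with a geometric assertion rather than a full case analysis.
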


\begin{proof}[Proof of Lemma~\ref{lem:lower.constructT}]
in Appendix \ref{sec:proof.lower}.
\end{proof}

Next we show that whenever
 $x=(x_{1},\ldots,x_{n})\in T$, it is difficult to tell whether
the data originated from
 $P\in \mathcal{P}_{1}^{d_{1}}$ or
 $P\in\mathcal{P}_{2}^{d_{2}}$. 
 Let $Q_{1}$ be in the convex hull of $\mathcal{P}_{1}^{d_{1}}$ and let $q_{2}$ be the density function of the uniform distribution on $[-K_{I},K_{I}]^{d_{2}}$, then from \eqref{eq:lower.integrationT}, we know that  a lower bound
is given by $\int_{T}[q_{1}(x)\wedge q_{2}(x)]d\lambda(x)$. Hence if we can choose $Q_{1}$ such that
$q_{1}(x)\geq Cq_{2}(x)$ for every $x\in T$ with $C<1$, then
$q_{1}(x)\wedge q_{2}(x)\geq Cq_{2}(x)$, so that $C\int_{T} q_{2}(x)$
can serve as a lower bound of the minimax rate. Such existence of $Q_{1}$ and the inequality
$q_{1}(x)\geq Cq_{2}(x)$ is shown in Claim
\ref{claim:lower.probability.bound}.

\begin{claim}
\label{claim:lower.probability.bound}
Let
$T=S_{n}\overset{n}{\underset{i=1}{\prod}}T_{i}$ where the $T_{i}$'s
are from Lemma~\ref{lem:lower.constructT}. Let $Q_{2}$ be the uniform distribution on $[-K_{I},K_{I}]^{d_{2}}$, and let $\mathcal{P}_{1}^{d_{1}}$ be as in \eqref{eq:lower.distribution.lower}. Then there exists $Q_{1} \in co(\mathcal{P}_{1}^{d_{1}})$ satisfying that for all $x\in intT$, there exists $r_{x}>0$ such that for
all $r<r_{x}$, 
\[
Q_{1}\left(\overset{n}{\underset{i=1}{\prod}}B_{\|\cdot\|_{\mathbb{R}^{d_{2}},\infty}}(x_{i},r)\right)\geq 2^{-n}Q_{2}\left(\overset{n}{\underset{i=1}{\prod}}B_{\|\cdot\|_{\mathbb{R}^{d_{2}},\infty}}(x_{i},r)\right).
\]
\end{claim}

\begin{proof}[Proof of Claim~\ref{claim:lower.probability.bound}]
in Appendix \ref{sec:proof.lower}.
\end{proof}

The following lower bound is then a consequence of Le Cam's lemma, 
Lemma~\ref{lem:lower.constructT}, and the previous claim. 

\begin{prop}
\label{prop:lower.bound}
Fix $\tau_{g},\,\tau_{\ell}\in(0,\infty]$, $K_{I}\in[1,\infty)$,
$K_{v}\in(0,2^{-m}]$, $K_{p}\in[(2K_{I})^{m},\infty)$,
$d_{1},\,d_{2}\in\mathbb{N}$, with $\tau_{g}\leq\tau_{\ell}$ and
$1\leq d_{1} < d_{2} \leq m$, and suppose that $\tau_{\ell}<K_{I}$.
Then
\begin{align*}
& \underset{\hat{d}_{n}}{\inf}
\underset{P\in {\cal Q}}{\sup}
\mathbb{E}_{P^{(n)}}[\ell(\hat{d}_{n},d(P))]\\
& \geq\left(C_{d_{1},d_{2},K_{I}}^{(\ref*{prop:lower.bound})}\right)^{n}\min\left\{ \tau_{\ell}^{-2(d_{2}-d_{1}+1)}n^{-2},1\right\} ^{(d_{2}-d_{1})n},
\end{align*}
where $C_{d_{1},d_{2},K_{I}}^{(\ref*{prop:lower.bound})}\in(0,\infty)$ is a constant depending only on $d_{1}$, $d_{2}$, and $K_{I}$
and
\[
{\cal Q} = 
\mathcal{P}_{\tau_{g},\tau_{\ell},K_{I},K_{v},K_{p}}^{d_{1}}
\bigcup\mathcal{P}_{\tau_{g},\tau_{\ell},K_{I},K_{v},K_{p}}^{d_{2}}.
\]
\end{prop}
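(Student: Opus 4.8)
The plan is to run Le Cam's two‑point bound (Lemma~\ref{lem:lower.LeCam}) with $\theta_{1}=d_{1}$ and $\theta_{2}=d_{2}$, so that $\Delta=\ell(d_{1},d_{2})=1$, fed by the geometric construction of Lemma~\ref{lem:lower.constructT} and the density comparison of Claim~\ref{claim:lower.probability.bound}. Since $2\tau_{\ell}<K_{I}$, Lemma~\ref{lem:lower.constructT} supplies distinct cylinders $T_{1},\dots,T_{n}\subset[-K_{I},K_{I}]^{d_{2}}$, isometries $\Phi_{i}$, and a one‑to‑one map $\mathscr{M}$ into $\mathcal{M}_{\tau_{g},\tau_{\ell},K_{I},K_{v}}^{d_{1}}$, and I set $T=\bigcup_{\sigma\in S_{n}}\sigma\bigl(\prod_{i=1}^{n}T_{i}\bigr)$. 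The first thing I fix is the pair
\begin{equation}
\label{eq:proof.lower.Q1andQ2}
Q_{2}=\mathrm{Unif}\bigl([-K_{I},K_{I}]^{d_{2}}\bigr)^{\otimes n},
\qquad
Q_{1}=\int_{\bigl(B_{\mathbb{R}^{d_{2}-d_{1}}}(0,w)\bigr)^{n}}P_{Y}^{(n)}\,d\mu(Y),
\end{equation}
where $\mu$ is uniform on $\bigl(B_{\mathbb{R}^{d_{2}-d_{1}}}(0,w)\bigr)^{n}$ and, for each $Y=(Y_{1},\dots,Y_{n})$, $P_{Y}$ is the uniform law on the $d_{1}$‑manifold $\mathscr{M}(Y)$, which meets $T_{i}$ in the slice $\Phi_{i}([-K_{I},K_{I}]^{d_{1}-1}\times[0,a]\times\{Y_{i}\})$. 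One checks $P_{Y}\in\mathcal{P}_{1}^{d_{1}}$ — the density bound is automatic because $K_{p}\ge(2K_{I})^{m}$ while Lemma~\ref{lem:regular.bounded.volume} bounds $\mathrm{vol}(\mathscr{M}(Y))$ below — so $Q_{1}$ belongs to the (closed) convex hull of $\mathcal{P}_{1}^{d_{1}}$, and averaging the $(d_{2}-d_{1})n$ transverse coordinates against $\mu$ makes $Q_{1}$ absolutely continuous with a Lebesgue density $q_{1}$ supported on $T$, while $q_{2}\equiv(2K_{I})^{-d_{2}n}$.

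With these choices Lemma~\ref{lem:lower.LeCam} yields $R_{n}\ge\tfrac14\int[q_{1}\wedge q_{2}]\,d\lambda\ge\tfrac14\int_{T}[q_{1}\wedge q_{2}]\,d\lambda$. Next I would upgrade Claim~\ref{claim:lower.probability.bound} to a pointwise statement: dividing its inequality by the common cube volume $(2r)^{d_{2}n}$ and letting $r\downarrow0$, the Lebesgue differentiation theorem gives $q_{1}(x)\ge c_{0}\,q_{2}(x)$ for $\lambda$‑a.e.\ $x\in\mathrm{int}\,T$, with $c_{0}=\tau_{\ell}^{(d_{2}-d_{1})n}/\bigl(2^{n}K_{I}^{(d_{2}-d_{1})n}\bigr)$. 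Since $2\tau_{\ell}<K_{I}$ forces $c_{0}<1$, we get $q_{1}\wedge q_{2}\ge c_{0}\,q_{2}$ on $T$, and hence $R_{n}\ge\tfrac{c_{0}}{4}\,Q_{2}(T)$.

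It remains to evaluate $Q_{2}(T)$ and simplify. As the $T_{i}$ are distinct, the $n!$ permuted copies $\sigma\bigl(\prod_{i}T_{i}\bigr)$ are pairwise disjoint up to a $\lambda$‑null set, and each $\Phi_{i}$ is an isometry, so
\[
Q_{2}(T)=n!\prod_{i=1}^{n}\frac{\mathrm{vol}(T_{i})}{(2K_{I})^{d_{2}}}
=n!\left(\frac{a\,\omega_{d_{2}-d_{1}}\,w^{\,d_{2}-d_{1}}}{(2K_{I})^{\,d_{2}-d_{1}+1}}\right)^{n}.
\]
Substituting the explicit values of $a$, $w$ and $c$ from Lemma~\ref{lem:lower.constructT} and applying Stirling's inequality $n!\ge(n/e)^{n}$, a direct (if lengthy) simplification — in which the $n^{n}$ from $n!$ cancels the $n$'s hidden in $a$ and $w$, and the bound $c\ge K_{I}/(2\tau_{\ell})$ produces the needed powers of $\tau_{\ell}$ — rewrites $\tfrac{c_{0}}{4}Q_{2}(T)$ as $\bigl(C_{d_{1},d_{2},K_{I}}^{(\ref*{prop:lower.bound})}\bigr)^{n}\tau_{\ell}^{(d_{2}-d_{1})n}\min\{\tau_{\ell}^{-2(d_{2}-d_{1})-1}n^{-2},1\}^{(d_{2}-d_{1})n}$, the two branches of the minimum being exactly the two cases ``$w=\tau_{\ell}$'' and ``$w=$ second term'' in Lemma~\ref{lem:lower.constructT} (equivalently, whether $\lceil n/c^{\,d_{2}-d_{1}}\rceil$ equals $1$ or grows with $n$). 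I expect the main obstacle to be this last computation: one must shepherd every $\tau_{\ell}$ coming from $a$, $w$ and $c$ into the displayed $\tau_{\ell}$‑factor, so that the base of the exponential depends on $d_{1},d_{2},K_{I}$ alone; the remaining steps (the Le Cam reduction, the disjointness computation, and the differentiation argument) are routine once Lemma~\ref{lem:lower.constructT} and Claim~\ref{claim:lower.probability.bound} are in hand.
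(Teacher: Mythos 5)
Your proposal follows essentially the same route as the paper's proof: Le Cam's two--point method (Lemma~\ref{lem:lower.LeCam}) with $\Delta=1$, the zigzag cylinder construction of Lemma~\ref{lem:lower.constructT}, the measure comparison of Claim~\ref{claim:lower.probability.bound} differentiated to a density inequality on $T$, and the computation $Q_2(T)=n!\,\lambda_{J}(T_1)^n$ finished with Stirling and the explicit values of $a$, $w$, $c$. The mixture $Q_1$ you write down is exactly the paper's \eqref{eq:proof.lower.Q1andQ2}.

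The one inaccuracy is the assertion that ``averaging the $(d_2-d_1)n$ transverse coordinates against $\mu$ makes $Q_1$ absolutely continuous with a Lebesgue density $q_1$ supported on $T$.'' Neither part is literally correct: the manifolds $\mathscr{M}(Y)$ all pass through the fixed arcs $A_1,\dots,A_4$ and the connecting regions $R_i$, so $Q_1$ places positive mass on configurations $x$ with some coordinate outside $\bigcup_i T_i$ (hence $Q_1(T)<1$), and part of that mass is singular with respect to $\lambda_{J^n}$. The paper handles this by taking the dominating measure $\nu = Q_1 + \lambda_J$ and computing $q_1=dQ_1/d\nu$, $q_2=dQ_2/d\nu$. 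Your argument is repaired without any other change by reading $q_1$ as the density of the Lebesgue--absolutely--continuous part of $Q_1$: on $\mathrm{int}\,T$ the samples lie in $\prod_\sigma T_{\sigma(i)}$, the transverse coordinate is pinned to $Y_{\sigma(i)}$ and hence genuinely uniformized by $\mu$, so Claim~\ref{claim:lower.probability.bound} plus Lebesgue differentiation yields $q_1(x)\ge c_0\,q_2(x)$ for a.e.\ $x\in T$; and since $\int q_1\wedge q_2\,d\lambda\le\int \tilde q_1\wedge\tilde q_2\,d\nu$ for any $\nu$ dominating both measures, restricting to the Lebesgue--a.c.\ part can only tighten a lower bound. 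With that caveat, the remaining steps (the check $P_Y\in\mathcal{P}_1^{d_1}$, the disjointness $\lambda_{J^n}(T)=n!\,\lambda_{J}(T_1)^n$, and the arithmetic producing the $\tau_\ell$--powers and the two--branch minimum) match the paper.
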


\begin{proof}[Proof of Proposition~\ref{prop:lower.bound}]
in Appendix \ref{sec:proof.lower}.
\end{proof}

\section{Upper Bound and Lower Bound for the General Case}
\label{sec:multidimension}

Now we generalize our results 
to allow the intrinsic
dimension $d$ to be any integer between 1 and
$m$. Thus the model is
$\mathcal{P}=\overset{m}{\underset{d=1}{\bigcup}}
\mathcal{P}_{\tau_{g},\tau_{\ell},K_{I},K_{v},K_{p}}^{d}$
as in \eqref{eq:regular.distribution.multi}.  
For the upper bound, we extend the dimension estimator $\hat{d}_{n}$ in \eqref{eq:upper.dim.estimator} and compute its maximum risk.
 And for the lower bound, we simply use the lower bound derived in Section
\ref{sec:lower} with $d_1 = 1$ and $d_2 = 2$.
All the proofs for this section are in Section \ref{sec:proof.multidimension}.

For the model $\mathcal{P}$ in \eqref{eq:regular.distribution.multi}, our dimension estimator $\hat{d}_{n}$ estimates the dimension as the smallest integer $1 \leq d \leq m$ that the $d$-squared length of the TSP is below a certain threshold, i.e. \eqref{eq:upper.travel} holds; that is,
\begin{align}
\label{eq:multidimension.estimator}
\hat{d}_{n}(X):=\min\Biggl\{ &  d\in[1,m]:\ \nonumber \\
& \underset{\sigma\in S_{n}}{\min}\biggl\{\overset{n-1}{\underset{i=1}{\sum}}\|X_{\sigma(i+1)}-X_{\sigma(i)}\|_{\mathbb{R}^{m}}^{d}\biggr\}\leq C_{K_{I},K_{v},m}^{(\ref*{lem:upper.lowerdim})}\max\left\{1,\tau_{g}^{d-m}\right\}\Biggr\}.
\end{align}
As a generalized result of Proposition~\ref{prop:upper.maximumrisk}, Proposition~\ref{prop:multidimension.maximumrisk} gives an upper bound for the risk of our estimator $\hat{d}_{n}$ in \eqref{eq:multidimension.estimator}. When the intrinsic dimension is $d$, our estimator $\hat{d}_{n}$ makes an error with probability of order $O\left(n^{-\frac{1}{d-1}n}\right)$.
\begin{prop}
	\label{prop:multidimension.maximumrisk}
	Fix $\tau_{g},\,\tau_{\ell}\in(0,\infty]$, $K_{I}\in[1,\infty)$, $K_{v}\in(0,2^{-m}]$, $K_{p}\in[(2K_{I})^{m},\infty)$,  with $\tau_{g}\leq\tau_{\ell}$. Let $\hat{d}_{n}$ be in \eqref{eq:multidimension.estimator}. Then:
	\begin{align*}
	& \underset{P\in\mathcal{P}_{\tau_{g},\tau_{\ell},K_{I},K_{v},K_{p}}^{d}}{\sup}\mathbb{E}_{P^{(n)}}\left[\ell\left(\hat{d}_{n},d(P)\right)\right] \nonumber \\
	& \leq 1(d>1)\left(C_{K_{I},K_{p},K_{v},m}^{(\ref*{prop:multidimension.maximumrisk})}\right)^{n}\max\left\{1,\tau_{g}^{-(dm+m-2d)n}\right\}n^{-\frac{1}{d-1}n},
	\end{align*}
	where $C_{K_{I},K_{p},K_{v},m}^{(\ref*{prop:multidimension.maximumrisk})}\in(0,\infty)$ is a constant depending only on $K_{I}$, $K_{p}$, $K_{v}$, and $m$.
\end{prop}
\begin{proof}[Proof of Proposition~\ref{prop:multidimension.maximumrisk}]
	in Appendix \ref{sec:proof.multidimension}.
\end{proof}
Then similarly to Section \ref{subsec:upper.minimax}, the maximum risk of our estimator $\hat{d}_{n}$ in \eqref{eq:multidimension.estimator} serves as an upper bound on the minimax risk $R_{n}$ in \eqref{eq:regular.minimax}. The maximum of the upper bound in Proposition~\ref{prop:multidimension.maximumrisk} over $d$ ranging from $1$ to $m$ should serve as the upper bound for the maximum risk, hence we get the upper bound of the minimax risk  $R_{n}$ in Proposition~\ref{prop:multidimension.upper} as a generalized result of Proposition~\ref{prop:upper.bound}.
\begin{prop}
\label{prop:multidimension.upper}
Fix $\tau_{g},\,\tau_{\ell}\in(0,\infty]$, $K_{I}\in[1,\infty)$, $K_{v}\in(0,2^{-m}]$, $K_{p}\in[(2K_{I})^{m},\infty)$, with $\tau_{g}\leq\tau_{\ell}$. Then:
\[
\underset{\hat{d}_{n}}{\inf}\underset{P\in\mathcal{P}}{\sup}\mathbb{E}_{P^{(n)}}\left[\ell\left(\hat{d}_{n},d(P)\right)\right]\leq\left(C_{K_{I},K_{p},K_{v},m}^{(\ref*{prop:multidimension.maximumrisk})}\right)^{n}\max\left\{1,\tau_{g}^{-(m^{2}-m)n}\right\}n^{-\frac{1}{m-1}n},
\]
where $C_{K_{I},K_{p},K_{v},m}^{(\ref*{prop:multidimension.maximumrisk})}$ is from Proposition~\ref{prop:multidimension.maximumrisk}.
\end{prop}

\begin{proof}[Proof of Proposition~\ref{prop:multidimension.upper}]
in Appendix \ref{sec:proof.multidimension}.
\end{proof}

Proposition~\ref{prop:multidimension.lower} provides a lower bound for
the minimax rate $R_{n}$ in \eqref{eq:regular.minimax}, in
multi-dimensions. It can be viewed of a generalization for the binary
dimension case in Proposition~\ref{prop:lower.bound}.

\begin{prop}
\label{prop:multidimension.lower}
Fix $\tau_{g},\,\tau_{\ell}\in(0,\infty]$, 
$K_{I}\in[1,\infty)$, $K_{v}\in(0,2^{-m}]$, 
$K_{p}\in[(2K_{I})^{m},\infty)$,  with 
$\tau_{g}\leq\tau_{\ell}$, and suppose that $\tau_{\ell}<K_{I}$. Then,
\[
\underset{\hat{d}_{n}}{\inf}\underset{P\in\mathcal{P}}{\sup}\mathbb{E}_{P^{(n)}}[\ell(\hat{d}_{n},d(P))]\geq\left(C_{K_{I}}^{(\ref*{prop:multidimension.lower})}\right)^{n}\min\left\{ \tau_{\ell}^{-4}n^{-2},1\right\} ^{n}
\]
where $C_{K_{I}}^{(\ref*{prop:multidimension.lower})}\in(0,\infty)$ is a constant depending only on $K_{I}$.
\end{prop}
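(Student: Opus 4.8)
The plan is to obtain the bound as a corollary of the binary case already treated in Section~\ref{sec:lower}, specialized to $d_{1}=1$ and $d_{2}=2$ (this is legitimate since the hypotheses force $m\geq 2$). The first step is a trivial monotonicity observation. With $d_{1}=1$, $d_{2}=2$, put $\mathcal{Q}=\mathcal{P}_{\tau_{g},\tau_{\ell},K_{I},K_{v},K_{p}}^{1}\cup\mathcal{P}_{\tau_{g},\tau_{\ell},K_{I},K_{v},K_{p}}^{2}$, which is a sub-collection of the multi-dimensional model $\mathcal{P}=\bigcup_{d=1}^{m}\mathcal{P}_{\tau_{g},\tau_{\ell},K_{I},K_{v},K_{p}}^{d}$. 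For every estimator $\hat d_{n}$ we have $\sup_{P\in\mathcal{P}}\mathbb{E}_{P^{(n)}}[\ell(\hat d_{n},d(P))]\geq\sup_{P\in\mathcal{Q}}\mathbb{E}_{P^{(n)}}[\ell(\hat d_{n},d(P))]$ pointwise, so taking the infimum over all estimators on both sides shows that the minimax risk over $\mathcal{P}$ is at least the minimax risk over $\mathcal{Q}$, and the latter is exactly the quantity that Proposition~\ref{prop:lower.bound} bounds from below (with $d_{1}=1$, $d_{2}=2$).

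The second step is to read off the bound and match constants. Applying Proposition~\ref{prop:lower.bound} with $d_{1}=1$, $d_{2}=2$, so that $d_{2}-d_{1}=1$ and $-2(d_{2}-d_{1})-1=-3$, gives
\[
\inf_{\hat d_{n}}\sup_{P\in\mathcal{Q}}\mathbb{E}_{P^{(n)}}\bigl[\ell(\hat d_{n},d(P))\bigr]\ \geq\ \bigl(C_{1,2,K_{I}}^{(\ref*{prop:lower.bound})}\bigr)^{n}\,\tau_{\ell}^{\,n}\,\min\bigl\{\tau_{\ell}^{-3}n^{-2},1\bigr\}^{n},
\]
which is precisely the asserted inequality once we set $C_{K_{I}}^{(\ref*{prop:multidimension.lower})}:=C_{1,2,K_{I}}^{(\ref*{prop:lower.bound})}$; since $d_{1},d_{2}$ are now frozen at $1,2$, this constant depends only on $K_{I}$, as required. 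Nothing else is needed beyond this chain of inequalities.

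The one point that is not completely immediate, and which I expect to be the only real obstacle, is the reconciliation of hypotheses: Proposition~\ref{prop:lower.bound} — through the serpentine construction of Lemma~\ref{lem:lower.constructT} — is stated under $2\tau_{\ell}<K_{I}$, whereas Proposition~\ref{prop:multidimension.lower} assumes only $\tau_{\ell}<K_{I}$. I would dispatch the remaining range $K_{I}/2\le\tau_{\ell}<K_{I}$ by rerunning the construction of Section~\ref{sec:lower} in its degenerate form: the zigzag in $[-K_{I},K_{I}]^{2}$ then fits only a bounded number of columns ($c$ stays $O(1)$), but the tube radius $w$ still scales like $n^{-2}$, so the Le Cam computation goes through unchanged and the resulting lower bound still dominates $(C_{K_{I}})^{n}\tau_{\ell}^{n}\min\{\tau_{\ell}^{-3}n^{-2},1\}^{n}$ after absorbing a $K_{I}$-dependent factor into the constant (note that in this regime $\tau_{\ell}\asymp K_{I}$, so the $\tau_{\ell}$-dependence is harmless). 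Apart from verifying this degenerate case, the proof is essentially content-free: all the work lives in Section~\ref{sec:lower}, and Proposition~\ref{prop:multidimension.lower} follows simply by enlarging the model from $\mathcal{Q}$ to $\mathcal{P}$.
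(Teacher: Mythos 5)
Your proposal is exactly the paper's proof: the paper likewise reduces to the binary case by the monotonicity of minimax risk under enlarging the model class, applies Proposition~\ref{prop:lower.bound} with $d_{1}=1$, $d_{2}=2$, and sets $C_{K_{I}}^{(\ref*{prop:multidimension.lower})}=C_{d_{1}=1,d_{2}=2,K_{I}}^{(\ref*{prop:lower.bound})}$. Your concern about the hypothesis mismatch ($2\tau_{\ell}<K_{I}$ in Proposition~\ref{prop:lower.bound} versus $\tau_{\ell}<K_{I}$ here) is in fact an oversight in the paper itself, which applies Proposition~\ref{prop:lower.bound} without comment on this point; your sketch of how to patch the range $K_{I}/2\le\tau_{\ell}<K_{I}$ is plausible but not worked out, so it sits at about the same level of rigor as the paper's own treatment.
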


\begin{proof}[Proof of Proposition~\ref{prop:multidimension.lower}]
in Appendix \ref{sec:proof.multidimension}.
\end{proof}

\section{Conclusion}

On a logarithmic scale,
the leading terms of the lower and upper bounds for the minimax rate $R_{n}$ in 
\eqref{eq:regular.minimax} have the form
$$
-n c\log \tau
$$
for some constant $c$,
where $\tau$ is the global reach for the upper bound and the local reach for the lower bound.
This shows that the difficulty of the problem of estimating
the dimension goes to 0 rapidly with sample size,
in a way that depends on the curvature of the manifold.

There are several open problems.
The first is to tighten the bounds
so that the upper and lower bounds match.
Second, it should be possible to extend the analysis
to allow noise.
With enough noise, the minimax rate should eventually
become the same as 
the rate in \citep{Koltchinskii2000}.
Finally, it would be interesting to get very precise bounds on the
many dimension estimators that appear in the literature and compare these
bounds to the minimax bounds.

\bibliography{reference}

\newpage
\appendix

\section{Proofs for Section \ref{sec:definition.regular}}
\label{sec:proof.definition.regular}

\noindent \textbf{Lemma~\ref{lem:regular.bounded.volume}.} \textit{
	Fix $\tau_{g}\in(0,\infty]$, and let $M$ be a $d$-dimensional manifold
	with global reach $\geq\tau_{g}$. For $r\in(0,\tau_{g})$, let
	$M_{r}:=\{x\in\mathbb{R}^{m}:\ dist_{\mathbb{R}^{m},}(x,M)<r\}$ be
	an $r$-neighborhood of $M$ in $\mathbb{R}^{m}$. Then, the volume
	of $M$ is upper bounded as 
	\begin{equation}
	vol_{M}(M)\leq\frac{m!}{d!}r^{d-m}vol_{\mathbb{R}^{m}}(M_{r}).\label{eq:proof.regular.bounded.volume.neighborhood}
	\end{equation}
	Further, fix $\tau_{\ell}\in(0,\infty]$, $K_{I}\in[1,\infty)$,
	$K_{v}\in(0,2^{-m}]$, with $\tau_{g}\leq\tau_{\ell}$, and suppose
	$M\in\mathcal{M}_{\tau_{g},\tau_{\ell},K_{I},K_{v}}^{d}$. Then the
	volume of $M$ is upper bounded as
	\begin{equation}
	vol_{M}(M) \leq C_{K_{I},m}^{(\ref*{lem:regular.bounded.volume})}\max\left\{1,\tau_{g}^{d-m}\right\},\label{eq:proof.regular.bounded.volume}
	\end{equation}
	where $C_{K_{I},m}^{(\ref*{lem:regular.bounded.volume})}$
	is a constant depending only on $K_{I}$ and $m$.
}

\begin{figure}
\begin{center}
\includegraphics{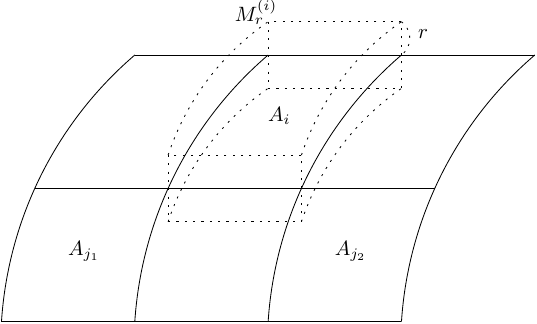}
\end{center}
\caption{$\{A_{1},\ldots,A_{l}\}$ is a disjoint cover of $M$, and each $A_{i}$ is a projection of $M_{r}^{(i)}$ on $M$.}
\label{fig:proof.regular.disjointcover}
\end{figure}

\begin{proof}[Proof of Lemma~\ref{lem:regular.bounded.volume}]

Suppose $\{A_{1},\,\ldots,\,A_{l}\}$ is a disjoint cover of $M$,
i.e. measurable subsets of $M$ such that $A_{i}\cap A_{j}=\emptyset$,
$\overset{l}{\underset{i=1}{\bigcup}}A_{i}=M$, and each $A_{i}$
is equipped with a chart map $\varphi^{(i)}:U_{i}\subset\mathbb{R}^{d}\rightarrow A_{i}$.
Such a triangulation is always possible. For each $A_{i}$, define
$M_{r}^{(i)}:=\{x\in\mathbb{R}^{m}:\ \pi_{M}(x)\in A_{i},\ dist_{\mathbb{R}^{m},||\cdot||_{1}}(x,M)\leq r\}$
so that each $A_{i}$ is a projection of $M_{r}^{(i)}$ on $M$, as
in Figure \ref{fig:proof.regular.disjointcover}. Since $\left\Vert x\right\Vert _{2}\leq\left\Vert x\right\Vert _{1}$ for all $x\in\mathbb{R}^{m}$, $\bigcup_{i=1}^{m}M_{r}^{(i)} \subset M_{r}$ holds, and hence
\begin{equation}
\label{eq:proof.regular.volumesum}
\overset{l}{\underset{i=1}{\sum}}vol_{\mathbb{R}^{m}}(M_{r}^{(i)})\leq vol_{\mathbb{R}^{m}}(M_{r}).
\end{equation}
Fix $i\in\{1,\ldots,l\}$. Then for each $u \in U_{i}$, there exists a linear isometry $R^{(i)}(u):\mathbb{R}^{m-d}\rightarrow(T_{\varphi^{(i)}(u)}M)^{\perp}$, which can be identified as
an $m\times(m-d)$ matrix with $j^{th}$ column being $R^{(i,j)}(u)$, so that $M_{r}^{(i)}$ can be parametrized as $\psi^{(i)}:U_{i}\times B_{\mathbb{R}^{m-d},\|\cdot\|_{1}}(0,r)\rightarrow M_{r}^{(i)}$ with
\begin{equation}
\label{eq:proof.regular.chartmap}
\psi^{(i)}(u,t)=\varphi^{(i)}(u)+R^{(i)}(u)t=\varphi^{(i)}(u)+\overset{m-d}{\underset{j=1}{\sum}}t_{j}R^{(i,j)}(u).
\end{equation}
Then, because $R^{(i)}$ is an isometry, 
\begin{equation}
\label{eq:proof.regular.isometry}
R^{(i)}(u)^{\top}R^{(i)}(u)=I_{m-d}.
\end{equation}
Let $\psi_{u}^{(i)}=\frac{\partial\psi^{(i)}}{\partial u}=\left(\frac{\partial\psi^{(i)}}{\partial u_{1}},\ldots,\frac{\partial\psi^{(i)}}{\partial u_{d}}\right)\in\mathbb{R}^{m\times d}$
be the partial derivative of $\psi^{(i)}$ with respect to $u$ and let  $\psi_{t}^{(i)}=\frac{\partial\psi^{(i)}}{\partial t}$ be the partial derivative of $\psi^{(i)}$ with respect to $t$. Define $\varphi_{u}^{(i)}$ and $R_{u}^{(i,j)}$ similarly. Then, since $R^{(i)}$ is an isometry, $image(R^{(i)}(u))=(T_{\varphi^{(i)}(u)}M)^{\perp}$ holds, and hence
\begin{equation}
\label{eq:proof.regular.isometry.chartmap.diffu}
R^{(i)}(u)^{\top}\varphi_{u}^{(i)}(u)=0.
\end{equation}
Also by differentiating \eqref{eq:proof.regular.isometry}, for all $j$, 
\begin{equation}
\label{eq:proof.regular.isometry.diffu}
R_{u}^{(i,j)}(u)^{\top}R^{(i)}(u)=0.
\end{equation}
Also by differentiating \eqref{eq:proof.regular.chartmap}, we get
\begin{equation}
\label{eq:proof.regular.chartmap.diffu}
\psi_{u}^{(i)}(u,t)=\varphi_{u}^{(i)}(u)+\overset{m-d}{\underset{j=1}{\sum}}t_{j}R_{u}^{(i,j)}(u),
\end{equation}
and 
\begin{equation}
\label{eq:proof.regular.chartmap.difft}
\psi_{t}^{(i)}(u,t)=R^{(i)}(u).
\end{equation}
Hence by multiplying \eqref{eq:proof.regular.chartmap.diffu} and \eqref{eq:proof.regular.chartmap.difft}, and by applying \eqref{eq:proof.regular.isometry}, \eqref{eq:proof.regular.isometry.chartmap.diffu}, and \eqref{eq:proof.regular.isometry.diffu}, we get
\begin{equation}
\label{eq:proof.regular.multiplycharts.difftdiffu}
\psi_{t}^{(i)}(u,t)^{\top}\psi_{u}^{(i)}(u,t)=R^{(i)}(u)^{\top}\varphi_{u}^{(i)}(u)+R^{(i)}(u)^{\top}R_{u}^{(i)}(u)t=0,
\end{equation}
and
\begin{equation}
\label{eq:proof.regular.multiplycharts.difftdifft}
\psi_{t}^{(i)}(u,t)^{\top}\psi_{t}^{(i)}(u,t)=R^{(i)}(u)^{\top}R^{(i)}(u)=I_{m-d}.
\end{equation}
Now let's consider $\psi_{u}^{(i)}(u,t)^{\top}\psi_{u}^{(i)}(u,t)$.
From \eqref{eq:proof.regular.isometry.diffu} and $image(R^{(i)}(u))=(T_{\varphi^{(i)}(u)}M)^{\perp}$, column space generated by $R_{u}^{(i,j)}(u)$ is contained in $T_{\varphi^{(i)}(u)}M$, i.e. 
\[
\left\langle R_{u}^{(i,j)}(u)\right\rangle\subset T_{\varphi^{(i)}(u)}(M)=span(\varphi_{u}^{(i)}(u)).
\]
Therefore, there exists $\Lambda^{(i,j)}(u)$ : $d\times d$ matrix
such that
\[
R_{u}^{(i,j)}(u)=\varphi_{u}^{(i)}(u)\Lambda^{(i,j)}(u).
\]
Then by applying this to \eqref{eq:proof.regular.chartmap.diffu}, 
\begin{equation}
\label{eq:proof.regular.twochartmaps.diffu}
\psi_{u}^{(i)}(u,t)=\varphi_{u}^{(i)}(u)\left(I+\overset{m-d}{\underset{j=1}{\sum}}t_{j}\Lambda^{(i,j)}(u)\right).
\end{equation}
Now $M$ being of global reach $\geq\tau_{g}$ implies $\psi_{u}^{(i)}(u,t)$
is of full rank for all $t\in B_{\mathbb{R}^{m-d},\|\cdot\|_{1}}(0,\tau_{g})$. From \eqref{eq:proof.regular.twochartmaps.diffu},
this implies $I+\overset{m-d}{\underset{j=1}{\sum}}t_{j}\Lambda^{(i,j)}(u)$
is invertible for all  $t\in B_{\mathbb{R}^{m-d},\|\cdot\|_{1}}(0,\tau_{g})$, and this
implies all singular values of $\Lambda^{(i,j)}(u)$ are bounded by
$\frac{1}{\tau_{g}}$. Hence for all $v\in\mathbb{R}^{d},$ 
\[
\left|v^{\top}\Lambda^{(i,j)}(u)v\right|\leq\frac{\|v\|_{2}^{2}}{\tau_{g}},
\]
and accordingly,
\begin{align*}
\left|v^{\top}\left(I+\overset{m-d}{\underset{j=1}{\sum}}t_{j}\Lambda^{(i,j)}(u)\right)v\right| & \geq\|v\|_{2}^{2}-\overset{m-d}{\underset{j=1}{\sum}}|t_{j}|\left|v^{\top}\Lambda^{(i,j)}(u)v\right|\\
& \geq\left(1-\frac{\|t\|_{1}}{\tau_{g}}\right)\|v\|_{2}^{2}.
\end{align*}
Hence any singular value $\sigma$ of $I+\overset{m-d}{\underset{j=1}{\sum}}t_{j}\Lambda^{(i,j)}(u)$ satisfies $|\sigma|\geq1-\frac{\|t\|_{1}}{\tau_{g}}$. And since $\|t\|_{1}\leq \tau_{g}$,
\[
\left|I+\overset{m-d}{\underset{j=1}{\sum}}t_{j}\Lambda^{(i,j)}(u)\right|\geq\left(1-\frac{\|t\|_{1}}{\tau_{g}}\right)^{d}.
\]
By applying this result to \eqref{eq:proof.regular.twochartmaps.diffu}, the determinant of $\psi_{u}^{(i)}(u,t)^{\top}\psi_{u}^{(i)}(u,t)$
is lower bounded as 
\begin{align}
\label{eq:proof.regular.multiplycharts.diffudiffu}
\left|\psi_{u}^{(i)}(u,t)^{\top}\psi_{u}^{(i)}(u,t)\right| & =\left|I+\overset{m-d}{\underset{j=1}{\sum}}t_{j}\Lambda^{(i,j)}(u)\right|^{2}\left|\varphi_{u}^{(i)}(u)^{\top}\varphi_{u}^{(i)}(u)\right| \nonumber\\
& \geq\left(1-\frac{\|t\|_{1}}{\tau_{g}}\right)^{2d}\left|\varphi_{u}^{(i)}(u)^{\top}\varphi_{u}^{(i)}(u)\right|.
\end{align}
Now, let $g_{ij}^{(M_{r})}$ be the Riemannian metric tensor of $M_{r}$,
and $g_{ij}^{(M)}$ be the Riemannian metric tensor of $M$. Then
from \eqref{eq:proof.regular.multiplycharts.difftdiffu}, \eqref{eq:proof.regular.multiplycharts.difftdifft}, and \eqref{eq:proof.regular.multiplycharts.diffudiffu}, the determinant of Riemannian metric tensor $g_{ij}^{(M_{r})}$
is lower bounded by 
\begin{align*}
|\det(g_{ij}^{(M_{r})})| & =\left|\left(\psi_{u}^{(i)}(u,t)\ \psi_{t}^{(i)}(u,t)\right)^{\top}\left(\psi_{u}^{(i)}(u,t)\ \psi_{t}^{(i)}(u,t)\right)\right|\\
& =\left|\begin{array}{cc}
\psi_{u}^{(i)}(u,t)^{\top}\psi_{u}^{(i)}(u,t) & \ \psi_{u}^{(i)}(u,t)^{\top}\psi_{t}^{(i)}(u,t)\\
\psi_{u}^{(i)}(u,t)^{\top}\psi_{t}^{(i)}(u,t) & \ \psi_{t}^{(i)}(u,t)^{\top}\psi_{t}^{(i)}(u,t)
\end{array}\right|\\
& =\left|\begin{array}{c}
\psi_{u}^{(i)}(u,t)^{\top}\psi_{u}^{(i)}(u,t)\end{array}\right|\\
& \geq\left(1-\frac{\|t\|_{1}}{\tau_{g}}\right)^{2d}\left|\varphi_{u}^{(i)}(u)^{\top}\varphi_{u}^{(i)}(u)\right|\\
& =\left(1-\frac{\|t\|_{1}}{\tau_{g}}\right)^{2d}|\det(g_{ij}^{(M)})|.
\end{align*}
And from this, the volume of $M_{r}^{(i)}$ is lower bounded as 
\begin{align}
\label{eq:proof.regular.volume.bound.patch}
vol_{\mathbb{R}^{m}}(M_{r}^{(i)}) & =\int_{U_{i}\times B_{\mathbb{R}^{m},\|\cdot\|_{1}}(0,r)}\sqrt{|\det(g_{ij}^{(M_{r})})|}dudt\nonumber \\
& \geq\int_{U_{i}}\int_{B_{\mathbb{R}^{m},\|\cdot\|_{1}}(0,r)}(1-\|t\|_{1}\kappa_{g})^{d}\sqrt{|\det(g_{ij}^{(M)})|}dtdu\nonumber \\
& =vol(U_{i})\int_{0}^{r}\int_{t_{1}+\cdots+t_{m-d-1}\leq s}\left(1-\frac{s}{\tau_{g}}\right)^{d}dt_{1}\cdots dt_{m-d-1}ds\nonumber \\
& =\frac{1}{(m-d-1)!}vol(U_{i})\int_{0}^{r}s^{m-d-1}\left(1-\frac{s}{\tau_{g}}\right)^{d}ds \nonumber \\
 & =\frac{1}{(m-d-1)!}r^{m-d}vol(U_{i})\int_{0}^{1}u^{m-d-1}\left(1-\frac{r}{\tau_{g}}u\right)^{d}du\nonumber \\
& \geq\frac{1}{(m-d-1)!}r^{m-d}vol(U_{i})\int_{0}^{1}u^{m-d-1}(1-u)^{d}du\nonumber \\
& =\frac{d!}{m!}r^{m-d}vol(U_{i}).
\end{align}
By applying \eqref{eq:proof.regular.volume.bound.patch} to \eqref{eq:proof.regular.volumesum}, we can lower bound the volume of $M_{r}$ as
\begin{align*}
vol_{\mathbb{R}^{m}}(M_{r}) & \geq\frac{d!}{m!}r^{m-d}\overset{l}{\underset{i=1}{\sum}}vol(U_{i})\nonumber \\
& =\frac{d!}{m!}r^{m-d}vol_{M}(M),
\end{align*}
hence rewriting this gives \eqref{eq:proof.regular.bounded.volume.neighborhood} as
\begin{equation}
vol_{M}(M) \leq \frac{m!}{d!}r^{d-m}vol_{\mathbb{R}^{m}}(M_{r}).\label{eq:proof.regular.volume.bound.metric}
\end{equation}
Now, suppose $M\in\mathcal{M}_{\tau_{g},\tau_{\ell},K_{I},K_{v}}^{d}$. With $r=\min\{\tau_{g},K_{I}\}$, $M_{r}$ is contained in $\min\{\tau_{g},K_{I}\}$-neighborhood
of $I$, hence 
\begin{equation}
vol_{\mathbb{R}^{m}}(M_{r})
\leq 2^{m}(K_{I}+\min\{\tau_{g},K_{I}\})^{m}
\leq 2^{2m}K_{I}^{m}.\label{eq:proof.regular.volume.bound.subset}
\end{equation}
By combining \eqref{eq:proof.regular.volume.bound.metric} and \eqref{eq:proof.regular.volume.bound.subset}, we get the desired upper bound of $vol_{M}(M)$ in  \eqref{eq:proof.regular.bounded.volume} as
\begin{align*}
vol_{M}(M) & \leq \frac{m!}{d!}r^{d-m}vol_{\mathbb{R}^{m}}(M_{r})\\
& \leq \frac{m!}{d!}2^{2m}K_{I}^{m}\min\{\tau_{g},K_{I}\}^{d-m}\\
& \leq C_{K_{I},m}^{(\ref*{lem:regular.bounded.volume})}\max\left\{1,\tau_{g}^{d-m}\right\},
\end{align*}
where $C_{K_{I},m}^{(\ref*{lem:regular.bounded.volume})}:=m!2^{2m}K_{I}^{m}$ is a constant depending
only on $K_{I}$ and $m$.

\end{proof}

\noindent  \textbf{Lemma~\ref{lem:regular.bounded.covernumber}.} \textit{
	Fix $\tau_{g},\,\tau_{\ell}\in(0,\infty]$, $K_{I}\in[1,\infty)$, $K_{v}\in(0,2^{-m}]$, with $\tau_{g}\leq\tau_{\ell}$. Let $M\in\mathcal{M}_{\tau_{g},\tau_{\ell},K_{I},K_{v}}^{d}$ and $r\in(0,2\sqrt{3}\tau_{g}]$. 
	Then $M$ can be covered by $N$ radius $r$ balls $B_{M}(p_{1},r)$, $\ldots$, $B_{M}(p_{N},r)$, with 
	\begin{equation}
	\label{eq:proof.regular.volumecoverbound}
	N \leq \left\lfloor \frac{2^{d}vol(M)}{K_{v} r^{d}\omega_{d}}\right\rfloor.
	\end{equation}
}
\begin{proof}[Proof of Lemma~\ref{lem:regular.bounded.covernumber}]
	We follow the strategy in \citep[][4.3.1. Lemma 3]{MaF2011}.\\
	Consider a maximal family of disjoint balls $\left\{ B_{M}(p_{1},\frac{r}{2}),\ldots,B_{M}(p_{N},\frac{r}{2})\right\} $,
	i.e. $B_{M}(p_{i},\frac{r}{2})\cap B_{M}(p_{j},\frac{r}{2})=\emptyset$
	for $i\neq j$ and for all $q\in M$, there exists $i\in[1,N]$ such
	that $B_{M}(q,\frac{r}{2})\cap B_{M}(p_{i},\frac{r}{2})\neq\emptyset$.
	Then $\left\Vert q-p_{i}\right\Vert _{2}<r$ holds, so $\left\{ B_{M}(p_{1},r),\ldots,B_{M}(p_{N},r)\right\} $
	covers $M$. Now, note that $B_{M}(p_{i},\frac{r}{2})$'s are disjoint,
	and hence 
	\begin{equation}
	\sum_{i=1}^{N}vol(B_{M}(p_{i},\frac{r}{2}))\leq vol(M).\label{eq:proof.regular.volumecoversum}
	\end{equation}
	Then since $\frac{r}{2}\leq\sqrt{3}\tau_{g}$, the condition (4) in Definition \ref{def:regular.manifold} implies
	$vol(B_{M}(p_{i},\frac{r}{2}))\geq K_{v}2^{-d}r^{d}\omega_{d}$ for
	all $i$, hence applying this to \eqref{eq:proof.regular.volumecoversum}
	yields 
	\[
	N\leq\frac{2^{d}vol(M)}{K_{v}r^{d}\omega_{d}},
	\]
	hence $M$ can be covered by $N$ radius $r$ balls with $N$ satisfying \eqref{eq:proof.regular.volumecoverbound}.
\end{proof}

\begin{lem}
\label{lem:proof.upper.Toponogov}
(Toponogov comparison theorem, 1959) Let $(M,g)$ be a complete
Riemannian manifold with sectional curvature$\geq \kappa$, and let $S_{\kappa}$
be a surface of constant Gaussian curvature $\kappa$. Given any geodesic
triangle with vertices $p,q,r\in M$ forming an angle $\alpha$ at $q$,
consider a (comparison) triangle with vertices
$\bar{p},\bar{q},\bar{r}\in S_{\kappa}$ such that
$dist_{S_{\kappa}}(\bar{p},\bar{q})=dist_{M}(p,q)$, $dist_{S_{\kappa}}(\bar{r},\bar{q})=dist_{M}(r,q)$, and $\angle\bar{p}\bar{q}\bar{r} = \angle pqr$. Then
\[
dist_{M}(\bar{p},\bar{r}) \leq dist_{S_{\kappa}}(p,r).
\]
\end{lem}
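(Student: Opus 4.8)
The statement is the classical Toponogov triangle comparison theorem, and the plan is to reproduce the standard proof, based on the behaviour of the distance function along one side of the triangle together with the Rauch comparison theorem. (The displayed inequality should be read as $dist_M(p,r)\le dist_{S_\kappa}(\bar p,\bar r)$, i.e. the side opposite the prescribed angle is no longer in $M$ than in the model space $S_\kappa$.) By completeness and the Hopf--Rinow theorem, first fix a unit-speed minimizing geodesic $\gamma\colon[0,\ell]\to M$ from $q=\gamma(0)$ to $r=\gamma(\ell)$ with $\ell=dist_M(q,r)$, and a unit-speed minimizing geodesic from $q$ to $p$ of length $L=dist_M(p,q)$ meeting $\gamma$ at $q$ with angle $\alpha=\angle pqr$. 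Put $f(t)=dist_M(p,\gamma(t))$, and let $\bar\gamma$ and $\bar f$ denote the analogous objects in $S_\kappa$ built from the model hinge with side lengths $L,\ell$ and angle $\alpha$. Since $f(0)=\bar f(0)=L$, it suffices to prove $f(\ell)\le\bar f(\ell)=dist_{S_\kappa}(\bar p,\bar r)$; when $\kappa>0$ one notes first that Bonnet--Myers forces $\mathrm{diam}\,M\le\pi/\sqrt\kappa$, so the model hinge is well defined and all the distances stay in the monotonicity range used below.

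The core of the argument is a differential inequality for $f$. Wherever $\gamma(t)\neq p$ and $\gamma(t)$ lies outside the cut locus of $p$, $f$ is smooth and the first variation of arc length gives $f'(t)=-\cos\theta(t)$, where $\theta(t)$ is the angle at $\gamma(t)$ between $\gamma'(t)$ and the minimizing geodesic from $\gamma(t)$ to $p$; in particular $f'(0)=-\cos\alpha$, which matches $\bar f'(0)$. Applying the second variation formula to the geodesic variation joining $p$ to the points $\gamma(s)$ and using the hypothesis that the sectional curvature of $M$ is $\ge\kappa$, the Rauch (index-form) comparison theorem --- equivalently the Hessian comparison theorem for distance functions --- shows that the modified distance $\phi_\kappa\circ f$, with $\phi_\kappa$ the solution of $\phi_\kappa''+\kappa\phi_\kappa=1$, $\phi_\kappa(0)=\phi_\kappa'(0)=0$, satisfies
\[
(\phi_\kappa\circ f)''(t)\ \le\ 1-\kappa\,(\phi_\kappa\circ f)(t),
\]
while the same quantity for the constant-curvature model obeys this with equality. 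A Sturm-type comparison for the linear equation $u''+\kappa u=1$, using the matched initial data $(\phi_\kappa\circ f)(0)=(\phi_\kappa\circ\bar f)(0)$ and $(\phi_\kappa\circ f)'(0)=(\phi_\kappa\circ\bar f)'(0)$, then yields $\phi_\kappa\circ f\le\phi_\kappa\circ\bar f$ on $[0,\ell]$; since $\phi_\kappa$ is strictly increasing on the relevant range this gives $f\le\bar f$, and in particular $dist_M(p,r)=f(\ell)\le\bar f(\ell)=dist_{S_\kappa}(\bar p,\bar r)$.

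The main obstacle --- and where essentially all the technical work lies --- is that $f$ need not be smooth on all of $[0,\ell]$: $\gamma(t)$ may enter the cut locus of $p$, where $f$ is merely Lipschitz and the minimizing geodesic to $p$ is not unique. This is handled by the upper-barrier technique: at any such point $t_0$, choosing one minimizing geodesic from $\gamma(t_0)$ to $p$ produces a smooth function $f_{t_0}\ge f$ near $t_0$ with $f_{t_0}(t_0)=f(t_0)$ for which the displayed inequality holds at $t_0$, so one runs the Sturm comparison for $\phi_\kappa\circ f$ in the barrier (support-function) sense via the maximum principle, despite the absence of classical second derivatives. An equivalent route, which some may find cleaner, is to prove the theorem first for ``thin'' triangles contained in a convex ball below the injectivity radius --- where $f$ is smooth and the argument above is routine --- and then globalize by subdividing $\gamma$ into short arcs, applying the thin case to each sub-hinge, and reassembling the estimates by induction with Alexandrov's gluing lemma for hinges. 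Either way the curvature hypothesis enters only through the clean inequality of the second paragraph, and the remaining effort is the cut-locus / barrier bookkeeping needed to make the comparison global.
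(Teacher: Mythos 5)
The paper does not give a proof of this lemma: it cites Petersen's textbook (Theorem 79, p.~339) and merely appends a remark that, for manifolds with boundary, the completeness hypothesis can be relaxed to the existence of a minimizing geodesic whose image lies in the interior. Your sketch is a correct reconstruction of the standard proof that such a reference gives: parametrize one side by a unit-speed geodesic $\gamma$, set $f(t)=d_M(p,\gamma(t))$, get $f'=-\cos\theta$ from the first variation formula, pass to the modified distance $\phi_\kappa\circ f$ (with $\phi_\kappa''+\kappa\phi_\kappa=1$, $\phi_\kappa(0)=\phi_\kappa'(0)=0$), derive $(\phi_\kappa\circ f)''\le 1-\kappa(\phi_\kappa\circ f)$ in the barrier sense from the Rauch/Hessian comparison, and close with a Sturm argument, handling the cut locus by upper barriers or by the thin-triangle-plus-gluing route. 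You also correctly flag the typo in the displayed conclusion, which should read $dist_M(p,r)\le dist_{S_\kappa}(\bar p,\bar r)$. The one thing you omit, which the paper's remark specifically addresses because it matters for how the lemma is actually used, is that the manifolds in this paper have boundary and are not globally complete, so Hopf--Rinow is not available in the application: note that your argument uses completeness only to produce the minimizing geodesics forming the hinge and to run the barrier argument along them, and the local geodesic completeness condition imposed on the class $\mathcal{M}$ in Definition~\ref{def:regular.manifold} is exactly what makes this possible, since Lemma~\ref{lem:regular.bounded.length} only invokes the comparison for small triangles contained in the interior of $M$.
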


\begin{proof}[Proof of Lemma~\ref{lem:proof.upper.Toponogov}]
\citep[See][Theorem 79, p.339]{Petersen2006}. Note that for a manifold
with boundary, the complete Riemannian manifold condition can be relaxed
to requiring the existence of a geodesic path joining $p$ and $q$ whose
image lies on $int M$.
\end{proof}

\begin{lem}
\label{lem:proof.upper.hyperbolic.cosine}
(Hyperbolic law of cosines) 
Let $H_{-\kappa^{2}}$ be a hyperbolic plane
whose Gaussian curvature is $-\kappa^{2}$. Then given a hyperbolic
triangle $ABC$ with angles $\alpha$, $\beta$, $\gamma$, and side
lengths $BC=a$, $CA=b$, and $AB=c$, the following holds:
\[
\cosh(\kappa a)  = \cosh (\kappa b)\cosh (\kappa c)-\sinh (\kappa b)\sinh (\kappa c)\cos\alpha.
\]
\end{lem}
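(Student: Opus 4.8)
The plan is to reduce to the case of curvature $-1$ and then carry out a short computation in the hyperboloid model. First I would note that the hyperbolic plane $H_{\kappa}$ of Gaussian curvature $-\kappa^{2}$ is obtained from the hyperbolic plane $H_{1}$ of curvature $-1$ by rescaling the metric by $\kappa^{-2}$; consequently every distance in $H_{\kappa}$ equals $\kappa^{-1}$ times the corresponding distance in $H_{1}$, while angles are unchanged. Hence it suffices to prove the identity $\cosh a' = \cosh b' \cosh c' - \sinh b' \sinh c' \cos\alpha$ for a geodesic triangle in $H_{1}$ with side lengths $a',b',c'$, and then substitute $a' = \kappa a$, $b' = \kappa b$, $c' = \kappa c$ to recover the stated formula.

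For the curvature $-1$ case I would work in the hyperboloid model $\mathbb{H}^{2} = \{x\in\mathbb{R}^{3}:\ \langle x,x\rangle = -1,\ x_{0}>0\}$, where $\langle x,y\rangle = -x_{0}y_{0}+x_{1}y_{1}+x_{2}y_{2}$ is the Lorentzian bilinear form. The two facts I would invoke — each standard, and each obtainable directly from the explicit geodesics $t\mapsto(\cosh t)\,p+(\sinh t)\,u$ with $\langle p,u\rangle=0$, $\langle u,u\rangle=1$ — are: (i) for $p,q\in\mathbb{H}^{2}$, $\cosh d_{\mathbb{H}^{2}}(p,q) = -\langle p,q\rangle$; and (ii) for unit tangent vectors $u,v\in T_{p}\mathbb{H}^{2}$, characterized by $\langle u,p\rangle=\langle v,p\rangle=0$ and $\langle u,u\rangle=\langle v,v\rangle=1$, the angle $\theta$ they subtend satisfies $\cos\theta = \langle u,v\rangle$.

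Next I would place the vertices at $p=A$, $q=B$, $r=C$, so that $\cosh c = -\langle p,q\rangle$, $\cosh b = -\langle p,r\rangle$, and $\cosh a = -\langle q,r\rangle$. The unit tangent vector at $A$ in the direction of $B$ is the normalized Lorentzian projection of $q$ onto $T_{p}\mathbb{H}^{2}$, namely $u = (\sinh c)^{-1}\bigl(q + \langle p,q\rangle p\bigr)$, since $\langle q + \langle p,q\rangle p,\ q + \langle p,q\rangle p\rangle = \langle p,q\rangle^{2}-1 = \sinh^{2}c$; likewise $v = (\sinh b)^{-1}\bigl(r + \langle p,r\rangle p\bigr)$. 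Expanding $\langle u,v\rangle$ and using $\langle p,p\rangle=-1$ to cancel two terms gives
\[
\cos\alpha = \langle u,v\rangle = \frac{\langle q,r\rangle + \langle p,q\rangle\langle p,r\rangle}{\sinh b \sinh c} = \frac{-\cosh a + \cosh b \cosh c}{\sinh b \sinh c},
\]
which rearranges to $\cosh a = \cosh b \cosh c - \sinh b \sinh c \cos\alpha$; rescaling as above finishes the proof.

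Since this is a classical identity, there is no genuine obstacle: the only point requiring care is the justification of facts (i) and (ii) for the hyperboloid model, which are either quoted from a standard reference or read off the explicit parametrization of geodesics. A model-free alternative would be to set up geodesic polar coordinates at $A$, integrate the Jacobi equation with curvature $-1$ to obtain the metric $dr^{2} + \sinh^{2}r\,d\theta^{2}$, and then recover the law of cosines by the same second-variation/comparison argument used in the Euclidean and spherical cases; but the hyperboloid computation above is the shortest route. Alternatively one may simply cite a standard reference on hyperbolic geometry, in the same spirit as the citation used for Lemma~\ref{lem:proof.upper.Toponogov}.
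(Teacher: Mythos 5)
Your proof is correct. The paper does not actually prove this lemma; it simply cites a standard reference, namely Bridson and Haefliger, \emph{Metric Spaces of Non-Positive Curvature}, Section~2.13, for the law of cosines in the model spaces $M_{\kappa}^{n}$. You anticipate this possibility in your final sentence. Your hyperboloid-model derivation is a complete and accurate self-contained argument: the rescaling reduction from curvature $-\kappa^{2}$ to curvature $-1$ is handled correctly (metric scaled by $\kappa^{-2}$, distances by $\kappa^{-1}$, angles preserved); facts (i) and (ii) are the standard hyperboloid-model identities and are correctly stated; the unit tangent vector $u=(\sinh c)^{-1}\bigl(q+\langle p,q\rangle p\bigr)$ is the right normalized Lorentzian projection, with the norm computation $\langle q+\langle p,q\rangle p,\,q+\langle p,q\rangle p\rangle=\langle p,q\rangle^{2}-1=\sinh^{2}c$ checking out; and the expansion of $\langle u,v\rangle$ correctly uses $\langle p,p\rangle=-1$ to collapse the cross terms to $\langle q,r\rangle+\langle p,q\rangle\langle p,r\rangle$. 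What the paper's citation buys is brevity; what your derivation buys is self-containment and transparency of exactly which model-space facts the identity rests on, which is in keeping with how the paper later uses this lemma alongside the Toponogov comparison theorem (which the paper also only cites). Either route is acceptable here.
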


\begin{proof}[Proof of Lemma~\ref{lem:proof.upper.hyperbolic.cosine}]
\citep[See][2.13 The Law of Cosines in $M_{\kappa}^{n}$, p.24]{BridsonH1999}.
\end{proof}

\begin{claim}
	\label{claim:proof.regular.dist.fraction}
	Let $\lambda\in[0,1]$ and $a,b\in[0,\infty)$.
	Then 
	\begin{equation}
	\label{eq:proof.regular.dist.fraction}
	\frac{\cosh^{-1}\left((1-\lambda)\cosh a+\lambda\cosh b\right)}{\sqrt{(1-\lambda)a^{2}+\lambda b^{2}}}\leq\frac{\sinh\left(\max\{a,b\}/2\right)}{\max\{a,b\}/2}.
	\end{equation}
\end{claim}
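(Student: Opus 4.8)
The plan is to reduce \eqref{eq:proof.regular.dist.fraction} to two elementary one–variable inequalities for hyperbolic functions. First I would set $\mu := \sinh(b/2)/(b/2)$ and $u := \sqrt{(1-\lambda)a^{2}+\lambda b^{2}}$, and record two preliminary observations: $0 \le u \le b$, since $0\le a<b$ and $\lambda\in[0,1]$; and $\mu\ge 1$, since $t\mapsto \sinh(t)/t$ is increasing on $(0,\infty)$ (immediate from the power series of $\sinh$) with limit $1$ at the origin. We may assume the denominator in \eqref{eq:proof.regular.dist.fraction} is strictly positive; the only excluded case, $a=\lambda=0$, renders the left-hand side a $0/0$ form. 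Because the argument $(1-\lambda)\cosh a+\lambda\cosh b$ of $\cosh^{-1}$ is $\ge 1$ while $\mu u\ge 0$, applying the function $\cosh$ (increasing on $[0,\infty)$) shows that \eqref{eq:proof.regular.dist.fraction} is equivalent to $\cosh(\mu u)\ge (1-\lambda)\cosh a+\lambda\cosh b$.

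Next I would use the identity $\cosh x = 1+2\sinh^{2}(x/2)$ to rewrite this target as $\sinh^{2}(\mu u/2)\ge (1-\lambda)\sinh^{2}(a/2)+\lambda\sinh^{2}(b/2)$, and then sandwich both sides around the common quantity $(\mu u/2)^{2}$. For the right-hand side: since $a/2$ and $b/2$ both lie in $[0,b/2]$, monotonicity of $t\mapsto\sinh(t)/t$ gives $\sinh(x/2)\le \mu x/2$ for $x\in\{a,b\}$, hence $(1-\lambda)\sinh^{2}(a/2)+\lambda\sinh^{2}(b/2)\le \tfrac{\mu^{2}}{4}\bigl((1-\lambda)a^{2}+\lambda b^{2}\bigr)=(\mu u/2)^{2}$. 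For the left-hand side: $\sinh y\ge y$ for $y\ge 0$ gives $\sinh^{2}(\mu u/2)\ge (\mu u/2)^{2}$. Chaining the two bounds yields the target inequality, and hence the claim.

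The argument is short and I do not expect a genuine obstacle; the only points needing a little care are checking that the two invocations of ``$\sinh(t)/t$ is increasing'' are applied to arguments in the correct range (which holds automatically because $a<b$ forces $a/2,b/2\le b/2$), and verifying that the passage between \eqref{eq:proof.regular.dist.fraction} and $\cosh(\mu u)\ge(1-\lambda)\cosh a+\lambda\cosh b$ is a genuine equivalence (both sides nonnegative, $\cosh$ monotone there, $\cosh^{-1}$ well defined on $[1,\infty)$). Conceptually, the factor $\mu=\sinh(b/2)/(b/2)$ in the statement is exactly what absorbs the gap between the quadratic lower bound $\sinh^{2}y\ge y^{2}$ and the quadratic upper bounds $\sinh^{2}(x/2)\le(\mu x/2)^{2}$, so the inequality is not tight except in limiting regimes.
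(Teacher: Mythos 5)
Your proof is correct, and it takes a genuinely different and more elementary route than the paper's. After the reduction (valid because both sides are nonnegative and $\cosh$ is strictly increasing on $[0,\infty)$) to
\[
\sinh^{2}\!\left(\tfrac{\mu u}{2}\right) \;\geq\; (1-\lambda)\sinh^{2}\!\left(\tfrac{a}{2}\right)+\lambda\sinh^{2}\!\left(\tfrac{b}{2}\right),
\qquad \mu=\frac{\sinh(b/2)}{b/2},\ \ u=\sqrt{(1-\lambda)a^{2}+\lambda b^{2}},
\]
you sandwich both sides around $(\mu u/2)^{2}$ using only the two facts that $t\mapsto\sinh(t)/t$ is increasing (applied at $a/2,b/2\leq b/2$, giving the upper bound on the right-hand side) and that $\sinh y\geq y$ (giving the lower bound on the left-hand side). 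Both facts are immediate from the power series, and the chaining is airtight. The paper instead sets $F(a,b,\lambda)=\cosh^{-1}\bigl((1-\lambda)\cosh a+\lambda\cosh b\bigr)$ and $G(a,b,\lambda)=\sqrt{(1-\lambda)a^{2}+\lambda b^{2}}$, first establishes $F\geq G$ (citing Toponogov), then differentiates $\log(F/G)$ in $a$ using the comparison $(\log(\cosh)')'(t)=\coth t > 1/t = (\log(t^{2})')'(t)$ to show $F/G$ is decreasing in $a$, reduces to the boundary case $a=0$, and finishes with the elementary bound $\cosh^{-1}(1+x)\leq\sqrt{2x}$. Your version avoids the appeal to Toponogov and the calculus on $F/G$ entirely, replacing them with the half-angle identity $\cosh x=1+2\sinh^{2}(x/2)$ plus two one-line hyperbolic inequalities; the paper's version, while longer, makes the geometric provenance of the bound (a curvature comparison) more visible, which fits the narrative of the surrounding Lemma \ref{lem:regular.bounded.length}. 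Both are valid; yours is shorter and self-contained.
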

\begin{proof}[Proof of Claim~\ref{claim:proof.regular.dist.fraction}]
	Without loss of generality, assume $a\leq b$. Consider two functions $F,G:[0,\infty)^{2}\times[0,1]\to\mathbb{R}$ defined
	as $F(a,b,\lambda)=f^{-1}((1-\lambda)f(a)+\lambda f(b))$ and $G(a,b,\lambda)=g^{-1}((1-\lambda)g(a)+\lambda g(b))$,
	for $0\leq a \leq b$, $\lambda\in[0,1]$, $f(t)=\cosh t$, and $g(t)=t^{2}$.
	Applying Toponogov comparison theorem in Lemma~\ref{lem:proof.upper.Toponogov} to \eqref{eq:proof.regular.bounded.length.dist.fraction} in the proof of Lemma~\ref{lem:regular.bounded.length} with $r_{1} = \frac{a+b}{2}$, $r_{2} = \frac{b-a}{2}$, $\alpha=\arccos(\sqrt{\lambda})\in[0,\frac{\pi}{2}]$
	implies
	\[
	F(a,b,\lambda)\geq G(a,b,\lambda),
	\]
	and $f$ and $g$ being
	strictly increasing function implies $a\leq G(a,b,\lambda)\leq F(a,b,\lambda)\leq b$.
	Also differentiating the log fraction $\frac{\partial}{\partial a}\log\frac{F(a,b,\lambda)}{G(a,b,\lambda)}$
	gives 
	\begin{align}
	\label{eq:proof.regular.dist.fraction.diffa}
	\frac{\partial}{\partial a}\log\frac{F(a,b,\lambda)}{G(a,b,\lambda)} & =\frac{(1-\lambda)f'(a)}{f'(F(a,b,\lambda))F(a,b,\lambda)}-\frac{(1-\lambda)g'(a)}{g'(G(a,b,\lambda))G(a,b,\lambda)}\nonumber \\
	& =\frac{1-\lambda}{F(a,b,\lambda)}\exp\left(-\int_{a}^{F(a,b,\lambda)}(\log f')'(t)dt\right)\nonumber \\
	& \quad-\frac{1-\lambda}{G(a,b,\lambda)}\exp\left(-\int_{a}^{G(a,b,\lambda)}(\log g')'(t)dt\right).
	\end{align}
	Then applying $(\log f')'(t)=\coth t>\frac{1}{t}=(\log g')'(t)$ for $t>0$
	and $F(a,b,\lambda)\geq G(a,b,\lambda)$ to \eqref{eq:proof.regular.dist.fraction.diffa}
	implies 
	\[
	0<\forall a<b,\ \frac{\partial}{\partial a}\log\frac{F(a,b,\lambda)}{G(a,b,\lambda)}<0,
	\]
	and hence 
	\[
	\frac{F(a,b,\lambda)}{G(a,b,\lambda)}\leq\frac{F(0,b,\lambda)}{G(0,b,\lambda)}.
	\]
	By expanding $F$ and $G$ from this, we get 
	\begin{align*}
	\frac{\cosh^{-1}\left((1-\lambda)\cosh a+\lambda\cosh b\right)}{\sqrt{(1-\lambda)a^{2}+\lambda b^{2}}} & \leq\frac{\cosh^{-1}\left(\lambda\cosh b+(1-\lambda)\right)}{\sqrt{\lambda b^{2}}}\\
	& =\frac{\cosh^{-1}\left(1+2\lambda\sinh^{2}\left(\frac{b}{2}\right)\right)}{b\sqrt{\lambda}}\\
	& \leq\frac{2\sinh\left(\frac{b}{2}\right)}{b},
	\end{align*}
	where the last line is coming from $1+x\leq\cosh\sqrt{2x}\Longrightarrow\cosh^{-1}\left(1+x\right)\leq\sqrt{2x}$ for all $x\geq 0$.
	Hence we get \eqref{eq:proof.regular.dist.fraction}. 
	
\end{proof}

\noindent \textbf{Lemma~\ref{lem:regular.bounded.length}.} \textit{Fix $\tau_{g},\,\tau_{\ell}\in(0,\infty]$, $K_{I}\in[1,\infty)$, $K_{v}\in(0,2^{-m}]$, with $\tau_{g}\leq\tau_{\ell}$. Let $M\in\mathcal{M}_{\tau_{g},\tau_{\ell},K_{I},K_{v}}^{d}$ and let
$\exp_{p_{k}}:\mathcal{E}_{k} \subset \mathbb{R}^{m} \rightarrow {\mathcal{M}}$
be an exponential map, where  $\mathcal{E}_{k}$ is the domain of the exponential
map $\exp_{p_{k}}$ and $T_{p_{k}}M$ is identified with $\mathbb{R}^{d}$.
For all
$v,w\in \mathcal{E}_{k}$, let $R_{k}:=\max\{||v||,||w||\}$. Then 
\begin{equation}
\label{eq:proof.regular.bounded.length}
\|\exp_{p_{k}}(v)-\exp_{p_{k}}(w)\|_{\mathbb{R}^{m}}\leq
\frac{\sinh(\sqrt{2}R_{k}/\tau_{\ell})}{\sqrt{2}R_{k}/\tau_{\ell}}\|v-w\|_{\mathbb{R}^{d}}.
\end{equation}
}

\begin{figure}
	\begin{center}
		\begin{subfigure}[b]{0.4\textwidth}
			\begin{center}
			\includegraphics{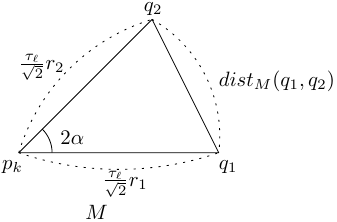}
			\end{center}
			\caption{triangle $\triangle p_{k} q_{1} q_{2}$ in $M$}
			\label{subfig:proof.upper.triangle.manifold}
		\end{subfigure}
		\begin{subfigure}[b]{0.5\textwidth}
			\begin{center}
			\includegraphics{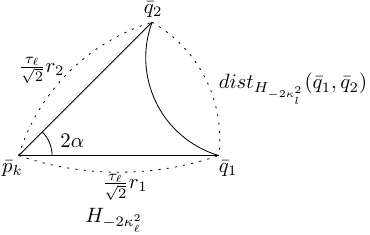}
			\end{center}
			\caption{comparison triangle $\triangle\bar{p}_{k}\bar{q}_{1}\bar{q}_{2}$ in $H_{-2\kappa_{\ell}^{2}}$}
			\label{subfig:proof.upper.triangle.comparison}
		\end{subfigure}
	\end{center}
	
	\caption{\protect\subref{subfig:proof.upper.triangle.manifold} A triangle $\triangle p_{k} q_{1} q_{2}$ in $M$ formed by $p_{k}$, $q_{1}$, $q_{2}$, and
		\protect\subref{subfig:proof.upper.triangle.comparison} its comparison triangle $\triangle\bar{p}_{k}\bar{q}_{1}\bar{q}_{2}$ in $H_{-2\kappa_{\ell}}^{2}$.}
	\label{fig:proof.upper.regularity.triangle}
\end{figure}

\begin{proof}[Proof of Lemma~\ref{lem:regular.bounded.length}]
Let $q_{1}=\exp_{p_{k}}(v)$ and $q_{2}=\exp_{p_{k}}(w)$. Let $r_{1}:=\frac{\sqrt{2}\left\Vert v\right\Vert}{\tau_{\ell}}$ and $r_{2}:=\frac{\sqrt{2}\left\Vert w\right\Vert}{\tau_{\ell}}$, so that 
$dist_{M}(p_{k},q_{1})=\frac{\tau_{\ell}}{\sqrt{2}}r_{1}$ and $dist_{M}(p_{k},q_{2})=\frac{\tau_{\ell}}{\sqrt{2}}r_{2}$, and let $\alpha:=\frac{1}{2}\angle q_{1}p_{k}q_{2}\in[0,\frac{\pi}{2}]$ so that 
$\angle q_{1}p_{k}q_{2}=2\alpha$, as in Figure \ref{fig:proof.upper.regularity.triangle}\subref{subfig:proof.upper.triangle.manifold}. Then
\begin{align}
\label{eq:proof.regular.bounded.length.dist.tangent}
\|v-w\|_{\mathbb{R}^{d}} & = \frac{\tau_{\ell}}{\sqrt{2}}\sqrt{r_{1}^{2}+r_{2}^{2}-2r_{1}r_{2}\cos2\alpha}\nonumber\\
& = \frac{\tau_{\ell}}{\sqrt{2}}\sqrt{(r_{1}+r_{2})^{2}\sin^{2}\alpha+(r_{1}-r_{2})^{2}\cos^{2}\alpha}.
\end{align}
Let $\kappa_{\ell}:=\frac{1}{\tau_{\ell}}$,  $H_{-2\kappa_{\ell}^{2}}$ be a surface of constant sectional curvature
$-2\kappa_{\ell}^{2}$, and let $\bar{p}_{k},\bar{q}_{1},\bar{q}_{2}\in H_{-2\kappa_{\ell}^{2}}$ be such that
 $dist_{H_{-2\kappa_{\ell}^{2}}}(\bar{p}_{k},\bar{q}_{1})=dist_{M}(p_{k},q_{1})$, $dist_{H_{-2\kappa_{\ell}^{2}}}(\bar{p}_{k},\bar{q}_{2})=dist_{M}(p_{k},q_{2})$, and $\angle\bar{q}_{1}\bar{p}_{k}\bar{q}_{2}=\angle q_{1}p_{k}q_{2}$, so
that $\triangle\bar{p}_{k}\bar{q}_{1}\bar{q}_{2}$ becomes a comparison
triangle of ${p}_{k}{q}_{1}{q}_{2}$, as in Figure
\ref{fig:proof.upper.regularity.triangle}\subref{subfig:proof.upper.triangle.comparison}. Then since $(\text{sectional curvature of }M)\geq-2\kappa_{\ell}^{2}$ by \cite[Proposition A.1 (iii)]{AamariKCMRW2017}, from the Toponogov comparison theorem in Lemma
\ref{lem:proof.upper.Toponogov},
\begin{equation}
\label{eq:proof.regular.bounded.length.toponogov}
dist_{M}(q_{1},q_{2})\leq dist_{H_{-2\kappa_{\ell}^{2}}}(\bar{q}_{1},\bar{q}_{2}).
\end{equation}
Also, by applying the hyperbolic law of cosines in Lemma~\ref{lem:proof.upper.hyperbolic.cosine} to the comparison triangle $\triangle\bar{p}_{k}\bar{q}_{1}\bar{q}_{2}$ in Figure \ref{fig:proof.upper.regularity.triangle}\subref{subfig:proof.upper.triangle.manifold},
\begin{align}
\cosh\left(\frac{\sqrt{2}}{\tau_{\ell}}dist_{H_{-2\kappa_{\ell}^{2}}}(\bar{q}_{1},\bar{q}_{2})\right) 
& = \cosh r_{1}\cosh r_{2}-\sinh r_{1}\sinh r_{2}\cos2\alpha \nonumber\\
 & = (\sin^{2}\alpha)\cosh(r_{1}+r_{2})+(\cos^{2}\alpha)\cosh(r_{1}-r_{2}).\label{eq:proof.regular.bounded.length.dist.comparison}
\end{align}
From \eqref{eq:proof.regular.bounded.length.dist.tangent} and \eqref{eq:proof.regular.bounded.length.dist.comparison}, we can expand the fraction of the distances $\frac{dist_{H_{-2\kappa_{\ell}^{2}}}(\bar{q}_{1},\bar{q}_{2})}{\|v-w\|_{\mathbb{R}^{d}}}$ as 
\begin{equation}
\label{eq:proof.regular.bounded.length.dist.fraction}
\frac{dist_{H_{-2\kappa_{\ell}^{2}}}(\bar{q}_{1},\bar{q}_{2})}{\|v-w\|_{\mathbb{R}^{d}}}=\frac{\cosh^{-1}\left(\sin^{2}\alpha\cosh(r_{1}+r_{2})+\cos^{2}\alpha\cosh(r_{1}-r_{2})\right)}{\sqrt{(\sin^{2}\alpha)(r_{1}+r_{2})^{2}+(\cos^{2}\alpha)(r_{1}-r_{2})^{2}}}.
\end{equation}
Then we can upper bound the fraction of the distances $\frac{dist_{H_{-2\kappa_{\ell}^{2}}}(\bar{q}_{1},\bar{q}_{2})}{\|v-w\|_{\mathbb{R}^{d}}}$ by plugging in $a=|r_{1}-r_{2}|$, $b=r_{1}+r_{2}$, $\lambda=\sin^{2}\alpha$ to Claim~\ref{claim:proof.regular.dist.fraction} as
\begin{equation}
\label{eq:proof.regular.bounded.length.claimapplied}
\frac{\cosh^{-1}\left(\sin^{2}\alpha\cosh(r_{1}+r_{2})+\cos^{2}\alpha\cosh(r_{1}-r_{2})\right)}{\sqrt{(\sin^{2}\alpha)(r_{1}+r_{2})^{2}+(\cos^{2}\alpha)(r_{1}-r_{2})^{2}}}\leq\frac{\sinh\left(\frac{r_{1}+r_{2}}{2}\right)}{(r_{1}+r_{2})/2}.
\end{equation}
Then since $t\mapsto\frac{\sinh t}{t}$ is an increasing function of $t$ and $\frac{r_{1}+r_{2}}{2}\leq \sqrt{2} R_{k}/\tau_{\ell}$, so 
\begin{equation}
\label{eq:proof.regular.bounded.length.unifupper}
\frac{\sinh\left(\frac{r_{1}+r_{2}}{2}\right)}{(r_{1}+r_{2})/2}\leq\frac{\sinh(\sqrt{2}R_{k}/\tau_{\ell})}{\sqrt{2}R_{k}/\tau_{\ell}}.
\end{equation}
Combining \eqref{eq:proof.regular.bounded.length.dist.fraction}, \eqref{eq:proof.regular.bounded.length.claimapplied}, and \eqref{eq:proof.regular.bounded.length.unifupper}, we have an upper bound of the fraction of the distances $\frac{dist_{H_{-2\kappa_{\ell}^{2}}}(\bar{q}_{1},\bar{q}_{2})}{\|v-w\|_{\mathbb{R}^{d}}}$ as
\begin{equation}
\label{eq:proof.regular.bounded.length.distfraction.unifupper}
\frac{dist_{H_{-2\kappa_{\ell}^{2}}}(\bar{q}_{1},\bar{q}_{2})}{\|v-w\|_{\mathbb{R}^{d}}}\leq\frac{\sinh(\sqrt{2}R_{k}/\tau_{\ell})}{\sqrt{2}R_{k}/\tau_{\ell}}.
\end{equation}
And finally, combining \eqref{eq:proof.regular.bounded.length.toponogov} and \eqref{eq:proof.regular.bounded.length.distfraction.unifupper}, we get the desired upper bound of $\|\exp_{p_{k}}(v)-\exp_{p_{k}}(w)\|_{\mathbb{R}^{m}}$ in \eqref{eq:proof.regular.bounded.length} as
\begin{align*}
\|\exp_{p_{k}}(v)-\exp_{p_{k}}(w)\|_{\mathbb{R}^{m}} & \leq dist_{M}(q_{1},q_{2})\\
& \leq dist_{H_{-2\kappa_{\ell}^{2}}}(\bar{q}_{1},\bar{q}_{2})\\
& \leq\frac{\sinh(\sqrt{2}R_{k}/\tau_{\ell})}{\sqrt{2}R_{k}/\tau_{\ell}}\|v-w\|_{\mathbb{R}^{d}}.
\end{align*}

\end{proof}


\section{Proofs for Section \ref{sec:upper}}
\label{sec:proof.upper}

\begin{claim}
\label{claim:proof.upper.conditionalcdf}
	
	Fix $\tau_{g},\,\tau_{\ell}\in(0,\infty]$, $K_{I}\in[1,\infty)$,
		$K_{v}\in(0,2^{-m}]$, $K_{p}\in[(2K_{I})^{m},\infty)$, $d_{1},\,d_{2}\in\mathbb{N}$,
		with $\tau_{g}\leq\tau_{\ell}$ and $1\leq d_{1}<d_{2}\leq m$. Let
		$X_{1},\ldots,X_{n}\sim P\in\mathcal{P}_{\tau_{g},\tau_{\ell},K_{I},K_{v},K_{p}}^{d_{2}}$.
		Then for all $y\in[0,\infty)$,		
	\begin{equation}
	\label{eq:proof.upper.conditionalcdf}
	P^{(n)}\left(||X_{n}-X_{n-1}||_{\mathbb{R}^{m}}^{d_{1}}\leq y|X_{1},\ldots,X_{n-1}\right)\leq C_{K_{I},K_{p},m}^{(\ref*{claim:proof.upper.conditionalcdf})}\left\{1,\tau_{g}^{d_{2}-m}\right\}y^{\frac{d_{2}}{d_{1}}},
	\end{equation}
	where $C_{K_{I},K_{p},m}^{(\ref*{claim:proof.upper.conditionalcdf})}$ is a constant depending
		only on $K_{I}$, $K_{p}$, and $m$.
	
\end{claim}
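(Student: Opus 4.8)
The plan is to separate the probabilistic content of the statement from a purely geometric volume estimate. Since $X_{1},\dots,X_{n}$ are i.i.d.\ under $P^{(n)}$, conditioning on $X_{1},\dots,X_{n-1}$ leaves $X_{n}$ distributed as $P$; hence the left‑hand side of \eqref{eq:proof.upper.conditionalcdf} equals $P\bigl(\|X_{n}-X_{n-1}\|_{\mathbb{R}^{m}}\le y^{1/d_{1}}\bigr)$. Because $P$ has a density at most $K_{p}$ with respect to $vol_{M}$, this is bounded by $K_{p}\,vol_{M}\bigl(B_{\mathbb{R}^{m}}(X_{n-1},y^{1/d_{1}})\cap M\bigr)$ (whether the ball is open or closed is immaterial, by monotone continuity of the volume in the radius). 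Thus it suffices to prove the geometric bound
\[
vol_{M}\bigl(B_{\mathbb{R}^{m}}(x_{0},s)\cap M\bigr)\le C_{K_{I},d_{2},m}\bigl(1+\tau_{g}^{d_{2}-m}\bigr)s^{d_{2}}\qquad\text{for all }x_{0}\in M,\ s>0,
\]
and then specialize to $s=y^{1/d_{1}}$, so that $s^{d_{2}}=y^{d_{2}/d_{1}}$ and one may take $C_{K_{I},K_{p},d_{2},m}^{(\ref{claim:proof.upper.conditionalcdf})}:=K_{p}\,C_{K_{I},d_{2},m}$.

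To prove the geometric bound I would localize the computation carried out in the proof of Lemma \ref{lem:regular.bounded.volume}. That argument in fact establishes that for every measurable $S\subseteq M$ and every $r\in(0,\tau_{g})$,
\[
vol_{M}(S)\le C_{d_{2},m}\,r^{d_{2}-m}\,vol_{\mathbb{R}^{m}}(S_{r}),\qquad S_{r}:=\{x\in\mathbb{R}^{m}:\pi_{M}(x)\in S,\ dist_{\mathbb{R}^{m}}(x,M)\le r\},
\]
since one only needs to refine the disjoint cover $\{A_{i}\}$ used there so that $S$ is, up to a set of $vol_{M}$‑measure zero, a union of the $A_{i}$'s, and the normal‑fibering lower bound $|\det g_{ij}^{(M_{r})}|\ge(1-\|t\|_{1}\kappa_{g})^{2d_{2}}|\det g_{ij}^{(M)}|$ is unaffected. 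Taking $S=B_{\mathbb{R}^{m}}(x_{0},s)\cap M$, every point of $S_{r}$ lies within $r$ of its projection and hence within $s+r$ of $x_{0}$, so $S_{r}\subseteq B_{\mathbb{R}^{m}}(x_{0},s+r)$ and $vol_{\mathbb{R}^{m}}(S_{r})\le\omega_{m}(s+r)^{m}$; therefore $vol_{M}(B_{\mathbb{R}^{m}}(x_{0},s)\cap M)\le C_{d_{2},m}\,\omega_{m}\,r^{d_{2}-m}(s+r)^{m}$. Choosing $r=\min\{s,\tau_{g}/2\}$ and using $s+r\le 2s$ yields $vol_{M}(B_{\mathbb{R}^{m}}(x_{0},s)\cap M)\le C'_{d_{2},m}s^{d_{2}}$ when $s\le\tau_{g}/2$ and $vol_{M}(B_{\mathbb{R}^{m}}(x_{0},s)\cap M)\le C'_{d_{2},m}\tau_{g}^{d_{2}-m}s^{m}$ when $s\ge\tau_{g}/2$.

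It then remains to combine the three regimes. For $s\le\tau_{g}/2$ the bound $C'_{d_{2},m}s^{d_{2}}$ already has the desired form, since $1+\tau_{g}^{d_{2}-m}\ge1$. For $\tau_{g}/2\le s\le 1$, since $m\ge d_{2}$ and $s\le1$ we have $s^{m}\le s^{d_{2}}$, so $C'_{d_{2},m}\tau_{g}^{d_{2}-m}s^{m}\le C'_{d_{2},m}(1+\tau_{g}^{d_{2}-m})s^{d_{2}}$. For $s\ge1$ I bound $vol_{M}(B_{\mathbb{R}^{m}}(x_{0},s)\cap M)\le vol_{M}(M)\le C_{K_{I},d_{2},m}(1+\tau_{g}^{d_{2}-m})$ by Lemma \ref{lem:regular.bounded.volume} and absorb it using $s^{d_{2}}\ge1$; the limiting case $\tau_{g}=\infty$ is already covered by the first regime, since then $\tau_{g}/2=\infty\ge s$ for all $s$. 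Taking the maximum of the constants obtained gives the geometric bound, and hence the claim with $C_{K_{I},K_{p},d_{2},m}^{(\ref{claim:proof.upper.conditionalcdf})}=K_{p}\,C_{K_{I},d_{2},m}$.

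The step I expect to be most delicate is the localization of Lemma \ref{lem:regular.bounded.volume}: one has to check that the triangulation and normal‑bundle parametrization underlying that proof can be arranged to resolve an arbitrary measurable piece of $M$, and to keep track of the (purely dimensional) constants arising from comparing $\ell_{1}$‑ and $\ell_{2}$‑neighborhoods. The remaining steps reduce to elementary manipulation of exponents, using $d_{2}\le m$ and $1+\tau_{g}^{d_{2}-m}\ge\max\{1,\tau_{g}^{d_{2}-m}\}$.
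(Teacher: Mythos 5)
Your proposal follows essentially the same route as the paper's proof: both reduce to a geometric volume bound via the density constraint $\sup_x \frac{dP}{dvol_M}(x)\le K_p$, localize the first inequality of Lemma \ref{lem:regular.bounded.volume} to a subset $S=M\cap B(x_0,s)$, enclose the tubular neighborhood $S_r$ in the Euclidean ball $B(x_0,s+r)$, and tune $r$. The only differences are bookkeeping (your $r=\min\{s,\tau_g/2\}$ with a three-regime split versus the paper's $r=\min\{y^{1/d_1},\tau_g\}$ with two regimes, using the cube diameter $2K_I\sqrt{m}$ to control the large-$y$ case), and the localization step you flag as delicate is indeed what the paper uses implicitly when it cites Lemma \ref{lem:regular.bounded.volume} for the subset $M\cap B$ rather than for $M$ itself.
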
 

\begin{proof}[Proof of Claim~\ref{claim:proof.upper.conditionalcdf}]
	
	Let $p_{X_{n}}$ be the pdf of $X_{n}$. Then the conditional cdf of $||X_{n}-X_{n-1}||_{\mathbb{R}^{m}}^{d_{1}}$
	given $X_{1},\ldots,X_{n-1}$ is upper bounded by the volume of a ball in the manifold $M$ as 
	\begin{align}
	\label{eq:proof.upper.conditionalcdf.bound.volume}
	& P^{(n)}\left(||X_{n}-X_{n-1}||_{\mathbb{R}^{m}}^{d_{1}}\leq y|X_{1},\ldots,X_{n-1}\right)\nonumber \\ 
	& =P^{(n)}\left(X_{n}\in B_{\mathbb{R}^{m}}\left(X_{n-1},y^{\frac{1}{d_{1}}}\right)|\ X_{1},\ldots,X_{n-1}\right)\nonumber \\
	& =\int_{M\cap\left(B_{\mathbb{R}^{m}}\left(X_{n-1},y^{\frac{1}{d_{1}}}\right)\right)}p_{X_{n}}\left(x_{n}\right)dvol_{M}(x_{n})\nonumber \\
	& \leq K_{p}vol_{M}\left(M\cap B\left(X_{n-1},y^{\frac{1}{d_{1}}}\right)\right),
	\end{align}
	where the last inequality is coming from the condition (6) in Definition \ref{def:regular.manifold}.
	And by applying Lemma~\ref{lem:regular.bounded.volume}, $vol_{M}\left(M\cap B\left(X_{n-1},y^{\frac{1}{d_{1}}}\right)\right)$
	can be further bounded as 
	\begin{align}
	\label{eq:proof.upper.volume.bound}
	& vol_{M}\left(M\cap B\left(X_{n-1},y^{\frac{1}{d_{1}}}\right)\right)\nonumber \\
	& \leq \frac{m!}{d_{2}!}\min\left\{ y^{\frac{1}{d_{1}}},\tau_{g}\right\} ^{d_{2}-m}vol_{\mathbb{R}^{m}}\left(B\left(X_{n-1},\,y^{\frac{1}{d_{1}}}+\min\left\{ y^{\frac{1}{d_{1}}},\tau_{g}\right\} \right)\right)\ (\text{Lemma }\ref{lem:regular.bounded.volume})\nonumber \\
	& =\frac{m!}{d_{2}!}\omega_{m}\left(y^{\frac{d_{2}}{d_{1}}}2^{m}1(y^{\frac{1}{d_{1}}}\leq\tau_{g})+y^{\frac{d_{2}}{d_{1}}}\left(\frac{\tau_{g}}{y^{\frac{1}{d_{1}}}}\right)^{d_{2}-m}\left(1+\left(\frac{\tau_{g}}{y^{\frac{1}{d_{1}}}}\right)\right)^{m}1(y^{\frac{1}{d_{1}}}>\tau_{g})\right)\nonumber \\
	& \leq \frac{m!}{d_{2}!}\omega_{m}2^{m}\left(y^{\frac{d_{2}}{d_{1}}}1(y^{\frac{1}{d_{1}}}\leq\tau_{g})+y^{\frac{d_{2}}{d_{1}}}\left(\frac{\tau_{g}}{(2K_{I}\sqrt{m})^{\frac{1}{d_{1}}}}\right)^{d_{2}-m}1(y^{\frac{1}{d_{1}}}>\tau_{g})\right)\nonumber \\
	& \leq C_{K_{I},m}^{(\ref*{claim:proof.upper.conditionalcdf},1)}\max\left\{1,\tau_{g}^{d_{2}-m}\right\}y^{\frac{d_{2}}{d_{1}}},
	\end{align}
	where $C_{K_{I},m}^{(\ref*{claim:proof.upper.conditionalcdf},1)}=m!\omega_{m}2^{m}\left(2K_{I}\sqrt{m}\right)^{m}$.
	By applying \eqref{eq:proof.upper.conditionalcdf.bound.volume} and \eqref{eq:proof.upper.volume.bound}, we get the upper bound on the conditional cdf of $||X_{n}-X_{n-1}||_{\mathbb{R}^{m}}^{d_{1}}$
	given $X_{1},\ldots,X_{n-1}$ in \eqref{eq:proof.upper.conditionalcdf} as 
	\begin{align}
	P^{(n)}\left(||X_{n}-X_{n-1}||_{\mathbb{R}^{m}}^{d_{1}}\leq y|X_{1},\ldots,X_{n-1}\right) & \leq K_{p}C_{K_{I},m}^{(\ref*{claim:proof.upper.conditionalcdf},1)}\max\left\{1,\tau_{g}^{d_{2}-m}\right\}y^{\frac{d_{2}}{d_{1}}}\nonumber \\
	& \leq C_{K_{I},K_{p},m}^{(\ref*{claim:proof.upper.conditionalcdf})}\max\left\{1,\tau_{g}^{d_{2}-m}\right\}y^{\frac{d_{2}}{d_{1}}},
	\end{align}
	where $C_{K_{I},K_{p},m}^{(\ref*{claim:proof.upper.conditionalcdf})}=K_{p}C_{K_{I},m}^{(\ref*{claim:proof.upper.conditionalcdf},1)}=m!K_{p}\omega_{m}2^{m}\left(2K_{I}\sqrt{m}\right)^{m}$ is a constant depending only on $K_{I}$, $K_{p}$, and $m$.
	
\end{proof}

\noindent \textbf{Lemma~\ref{lem:upper.higherdim}.} \textit{Fix $\tau_{g},\,\tau_{\ell}\in(0,\infty]$, $K_{I}\in[1,\infty)$,
	$K_{v}\in(0,2^{-m}]$, $K_{p}\in[(2K_{I})^{m},\infty)$,
	$d_{1},\,d_{2}\in\mathbb{N}$, with $\tau_{g}\leq\tau_{\ell}$ and
	$1\leq d_{1} < d_{2} \leq m$. Let $X_{1},\ldots,X_{n}\sim
	P\in\mathcal{P}_{\tau_{g},\tau_{\ell},K_{I},K_{v},K_{p}}^{d_{2}}$.
	Then for all $L>0$,
	\begin{equation}
	\label{eq:proof.upper.higherdim}
	P^{(n)}\left[\overset{n-1}{\underset{i=1}{\sum}}\|X_{i+1}-X_{i}\|_{\mathbb{R}^{m}}^{d_{1}}\leq L\right]\leq\frac{\left(C_{K_{I},K_{p},m}^{(\ref*{lem:upper.higherdim})}\right)^{n-1}L^{\frac{d_{2}}{d_{1}}(n-1)}\max\left\{1,\tau_{g}^{(d_{2}-m)(n-1)}\right\}}{(n-1)^{\left(\frac{d_{2}}{d_{1}}-1\right)(n-1)}(n-1)!},
	\end{equation}
	where $C_{K_{I},K_{p},m}^{(\ref*{lem:upper.higherdim})}$ is a constant depending only on $K_{I}$, $K_{p}$, and $m$.
}

\begin{proof}[Proof of Lemma~\ref{lem:upper.higherdim}] Let $Y_{i}:=\|X_{i+1}-X_{i}\|_{\mathbb{R}^{m}}^{d_{1}}$,
	$i=1,\ldots,n-1$, and let $P^{(n)}_{\overset{n-2}{\underset{i=1}{\sum}}Y_{i}}$ be the cumulative distribution function of $\overset{n-2}{\underset{i=1}{\sum}}Y_{i}$. Then from Claim~\ref{claim:proof.upper.conditionalcdf}, probability of the $d_{1}$-squared
	length of the path being bounded by $L$, $P^{(n)}\left(\overset{n-1}{\underset{i=1}{\sum}}Y_{i}\leq L\right)$,
	is upper bounded as 
	\begin{align*}
	& P^{(n)}\left(\overset{n-1}{\underset{i=1}{\sum}}Y_{i}\leq L\right)\\
	& =\int_{0}^{L}P^{(n)}\left(Y_{n-1}\leq y_{n-1}|\ \overset{n-2}{\underset{i=1}{\sum}}Y_{i}=L-y_{n-1}\right)dP^{(n)}_{\overset{n-2}{\underset{i=1}{\sum}}Y_{i}}(L-y_{n-1})\\
	& \leq C_{K_{I},K_{p},m}^{(\ref*{claim:proof.upper.conditionalcdf})}\max\left\{1,\tau_{g}^{d_{2}-m}\right\}\int_{0}^{L}y_{n-1}^{\frac{d_{2}}{d_{1}}}dP^{(n)}_{\overset{n-2}{\underset{i=1}{\sum}}Y_{i}}(L-y_{n-1})\ (\text{Claim }\ref{claim:proof.upper.conditionalcdf})\\
	& =C_{K_{I},K_{p},m}^{(\ref*{claim:proof.upper.conditionalcdf})}\max\left\{1,\tau_{g}^{d_{2}-m}\right\}\\
	& \quad\times\left(\left[-y_{n-1}^{\frac{d_{2}}{d_{1}}}P\left(\overset{n-2}{\underset{i=1}{\sum}}Y_{i}\leq L-y_{n-1}\right)\right]_{0}^{L}+\int_{0}^{L}P\left(\overset{n-2}{\underset{i=1}{\sum}}Y_{i}\leq L-y_{n-1}\right)d\left(y_{n-1}^{\frac{d_{2}}{d_{1}}}\right)\right)\\
	& =C_{K_{I},K_{p},m}^{(\ref*{claim:proof.upper.conditionalcdf})}\max\left\{1,\tau_{g}^{d_{2}-m}\right\}\int_{0}^{L}P\left(\overset{n-2}{\underset{i=1}{\sum}}Y_{i}\leq L-y_{n-1}\right)\frac{d_{2}}{d_{1}}y_{n-1}^{\frac{d_{2}-d_{1}}{d_{1}}}dy_{n-1}.
	\end{align*}
	By repeating this argument, we get an upper bound of $P^{(n)}\left(\overset{n-1}{\underset{i=1}{\sum}}Y_{i}\leq L\right)$
	as 
	\[
	P^{(n)}\left(\overset{n-1}{\underset{i=1}{\sum}}Y_{i}\leq L\right)\leq\left(\frac{d_{2}}{d_{1}}C_{K_{I},K_{p},m}^{(\ref*{claim:proof.upper.conditionalcdf})}\max\left\{1,\tau_{g}^{d_{2}-m}\right\}\right)^{n-1}\int_{\overset{n-1}{\underset{i=1}{\sum}}y_{i}\leq L}\overset{n-1}{\underset{i=1}{\prod}}y_{i}^{\frac{d_{2}-d_{1}}{d_{1}}}dy.
	\]
	Hence we get a further upper bound of $P^{(n)}\left(\overset{n-1}{\underset{i=1}{\sum}}\|X_{i+1}-X_{i}\|_{\mathbb{R}^{m}}^{d_{1}}\leq L\right)$
	in \eqref{eq:proof.upper.higherdim} with applying the AM-GM inequality as 
	\begin{align*}
	& P^{(n)}\left(\overset{n-1}{\underset{i=1}{\sum}}\|X_{i+1}-X_{i}\|_{\mathbb{R}^{m}}^{d_{1}}\leq L\right)\\
	& \leq\left(\frac{d_{2}}{d_{1}}C_{K_{I},K_{p},m}^{(\ref*{claim:proof.upper.conditionalcdf})}\max\left\{1,\tau_{g}^{d_{2}-m}\right\}\right)^{n-1}\int_{\overset{n-1}{\underset{i=1}{\sum}}y_{i}\leq L}\overset{n-1}{\underset{i=1}{\prod}}y_{i}^{\frac{d_{2}-d_{1}}{d_{1}}}dy\\
	& \leq\left(C_{K_{I},K_{p},m}^{(\ref*{lem:upper.higherdim})}\right)^{n-1}L^{\frac{d_{2}}{d_{1}}(n-1)}\max\left\{1,\tau_{g}^{(d_{2}-m)(n-1)}\right\}\\
	& \quad\times\int_{\overset{n-1}{\underset{i=1}{\sum}}y_{i}\leq1}\left(\frac{1}{n-1}\overset{n-1}{\underset{i=1}{\sum}}y_{i}\right)^{\frac{(d_{2}-d_{1})(n-1)}{d_{1}}}dy_{n-1}\cdots dy_{1}\ \text{(by AM-GM inequality)}\\
	& =\frac{\left(C_{K_{I},K_{p},m}^{(\ref*{lem:upper.higherdim})}\right)^{n-1}L^{\frac{d_{2}}{d_{1}}(n-1)}\max\left\{1,\tau_{g}^{(d_{2}-m)(n-1)}\right\}}{(n-1)^{\left(\frac{d_{2}}{d_{1}}-1\right)(n-1)}}\\
	& \quad\times\int_{0}^{1}\int_{\overset{n-2}{\underset{i=1}{\sum}}y_{i}\leq z}z^{\frac{(d_{2}-d_{1})(n-1)}{d_{1}}}dy_{n-2}\cdots dy_{1}dz\\
	& =\frac{\left(C_{K_{I},K_{p},m}^{(\ref*{lem:upper.higherdim})}\right)^{n-1}L^{\frac{d_{2}}{d_{1}}(n-1)}\max\left\{1,\tau_{g}^{(d_{2}-m)(n-1)}\right\}}{(n-1)^{\left(\frac{d_{2}}{d_{1}}-1\right)(n-1)}(n-2)!}\int_{0}^{1}z^{\frac{d_{2}(n-1)}{d_{1}}-1}dz\\
	& \leq\frac{\left(C_{K_{I},K_{p},m}^{(\ref*{lem:upper.higherdim})}\right)^{n-1}L^{\frac{d_{2}}{d_{1}}(n-1)}\max\left\{1,\tau_{g}^{(d_{2}-m)(n-1)}\right\}}{(n-1)^{\left(\frac{d_{2}}{d_{1}}-1\right)(n-1)}(n-1)!},
	\end{align*}
	where $C_{K_{I},K_{p},m}^{(\ref*{lem:upper.higherdim})}= m C_{K_{I},K_{p},m}^{(\ref*{claim:proof.upper.conditionalcdf})}$ is a constant depending only on $K_{I}$, $K_{p}$, and $m$.
	
\end{proof}

\begin{lem} 
\label{lem:proof.upper.spacefilling}
{\em (Space-filling curve)} There exists a surjective map
 $\psi_{d}:\ [0,1]\rightarrow[0,1]^{d}$ which is H{\"o}lder
continuous of order $1/d$, i.e.
\begin{equation}
0\leq\forall s,t\leq1,\ \|\psi_{d}(s)-\psi_{d}(t)\|_{\mathbb{R}^{d}}\leq2\sqrt{d+3}|s-t|^{1/d}.
\end{equation}
Such a map is called a space-filling curve.
\end{lem}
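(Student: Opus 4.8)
The plan is to build a surjection $\phi_{d}:[0,1]\to[0,1]^{d}$ satisfying the displayed H\"older bound and then extend it to a continuous surjection $\psi_{d}:\mathbb{R}\to\mathbb{R}^{d}$ in any convenient way; since the stated inequality is only required on $[0,1]$, the behaviour of $\psi_{d}$ outside $[0,1]$ is essentially unconstrained. For $d=1$ one simply takes $\psi_{1}=\mathrm{id}_{\mathbb{R}}$, which is $1$-Lipschitz and hence satisfies the bound with room to spare, so I assume $d\geq2$.

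For $\phi_{d}$ I would use the classical recursive (Hilbert-type) construction. For each integer $k\geq0$, subdivide $[0,1]$ into the $2^{dk}$ dyadic intervals of length $2^{-dk}$ and $[0,1]^{d}$ into the $2^{dk}$ closed subcubes of side $2^{-k}$, and fix an enumeration $Q_{1}^{(k)},\dots,Q_{2^{dk}}^{(k)}$ of those subcubes with the two properties that (i) consecutive cubes $Q_{i}^{(k)}$ and $Q_{i+1}^{(k)}$ share a $(d-1)$-dimensional face, and (ii) the level-$(k+1)$ enumeration refines the level-$k$ one, in the sense that for each $i$ the $2^{d}$ level-$(k+1)$ subcubes of $Q_{i}^{(k)}$ form a contiguous, consecutively face-adjacent block of the level-$(k+1)$ list. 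The existence of such a coherent family of enumerations in every dimension is exactly the $d$-dimensional Hilbert space-filling curve, which I would invoke as a known fact (see \citep{Steele1997.ch2,Buchin2008.ch2}) rather than reconstruct. Let $\phi_{d}^{(k)}:[0,1]\to[0,1]^{d}$ be the piecewise-linear path that, on the $i$-th level-$k$ interval, runs linearly between the midpoints of the two faces that $Q_{i}^{(k)}$ shares with its list-neighbours; by convexity $\phi_{d}^{(k)}$ maps that interval into $Q_{i}^{(k)}$, and by (ii) the same interval is mapped into $Q_{i}^{(k)}$ by every $\phi_{d}^{(k')}$ with $k'\geq k$. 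Hence $\|\phi_{d}^{(k)}-\phi_{d}^{(k+1)}\|_{\infty}\leq\sqrt{d}\,2^{-k}$, the sequence $(\phi_{d}^{(k)})_{k}$ converges uniformly to a continuous limit $\phi_{d}$, and $\phi_{d}$ is surjective by a standard compactness argument (every $y\in[0,1]^{d}$ lies in some $Q_{i_{k}}^{(k)}$ for each $k$, so picking $x_{k}$ in the $i_{k}$-th interval and passing to a convergent subsequence produces $x$ with $\phi_{d}(x)=y$).

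The H\"older estimate then comes from a diameter computation. If $|s-t|\leq2^{-dk}$ then $s$ and $t$ lie in a single level-$k$ interval or in two adjacent ones, so $\phi_{d}(s),\phi_{d}(t)\in Q_{i}^{(k)}\cup Q_{i+1}^{(k)}$ for some $i$; since two face-adjacent cubes of side $2^{-k}$ have union of diameter $\sqrt{4+(d-1)}\,2^{-k}=\sqrt{d+3}\,2^{-k}$ (one coordinate ranges over $2\cdot2^{-k}$, the remaining $d-1$ over $2^{-k}$), this gives $\|\phi_{d}(s)-\phi_{d}(t)\|\leq\sqrt{d+3}\,2^{-k}$. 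Now given $s\neq t$ in $[0,1]$, take the unique integer $k\geq0$ with $2^{-d(k+1)}<|s-t|\leq2^{-dk}$; then $|s-t|^{1/d}>2^{-(k+1)}$, i.e.\ $2^{-k}<2|s-t|^{1/d}$, and combining the two bounds yields $\|\phi_{d}(s)-\phi_{d}(t)\|\leq2\sqrt{d+3}\,|s-t|^{1/d}$, as required.

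It remains to extend $\phi_{d}$ to a surjection of $\mathbb{R}$ onto $\mathbb{R}^{d}$. Set $\psi_{d}|_{[0,1]}=\phi_{d}$, and over the intervals $[k,k+1]$ and $[-k-1,-k]$, $k\geq1$, glue in rescaled and translated copies of $\phi_{d}$ whose images run through an increasing sequence of cubes $[-k,k]^{d}$ exhausting $\mathbb{R}^{d}$, joined to their neighbours by straight segments so that $\psi_{d}$ is continuous on $\mathbb{R}$; surjectivity onto $\mathbb{R}^{d}$ is then immediate and the inequality on $[0,1]$ is untouched. The only substantive obstacle in this plan is the combinatorial input (i)--(ii) — the existence in arbitrary dimension of a self-similar, face-adjacency-preserving ordering of dyadic subcubes — which is precisely why I would cite it rather than prove it here.
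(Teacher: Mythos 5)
Your proof is correct and is essentially the argument the cited reference makes explicit: the paper's ``proof'' is only the citation ``\citep[See][Chapter 2.1.6]{Buchin2008.ch2}'', so you and the paper rest on the same underlying construction, the recursive $d$-dimensional Hilbert-type curve, with the constant $2\sqrt{d+3}$ coming exactly from your diameter computation for the union of two face-adjacent dyadic cubes and the choice $2^{-d(k+1)}<|s-t|\le 2^{-dk}$. Since you defer the combinatorial existence of a self-similar, face-adjacency-preserving enumeration of dyadic subcubes to the same references rather than reproving it, the two accounts rely on the same input; you simply carry out the H\"older estimate and the surjectivity/extension steps in detail instead of citing them wholesale, which is a fine way to handle this lemma.
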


\begin{proof}[Proof of Lemma~\ref{lem:proof.upper.spacefilling}]
\citep[See][Chapter 2.1.6]{Buchin2008.ch2}.
\end{proof}

\noindent \textbf{Lemma~\ref{lem:upper.lowerdim}.} \textit{Fix $\tau_{g},\,\tau_{\ell}\in(0,\infty]$, $K_{I}\in[1,\infty)$, $K_{v}\in(0,2^{-m}]$, $d_{1}\in\mathbb{N}$, with $\tau_{g}\leq\tau_{\ell}$. Let
	$M\in\mathcal{M}_{\tau_{g},\tau_{\ell},K_{p},K_{v}}^{d_{1}}$ and
	$X_{1},\ldots,X_{n}\in M$. Then 
	\begin{equation}
	\label{eq:proof.upper.lowerdim}
	\underset{\sigma\in S_{n}}{\min} \overset{n-1}{\underset{i=1}{\sum}}
	\|X_{\sigma(i+1)}-X_{\sigma(i)}\|_{\mathbb{R}^{m}}^{d_{1}} \leq 
	C_{K_{I},K_{v},m}^{(\ref*{lem:upper.lowerdim})}\max\left\{1,\tau_{g}^{d_{1}-m}\right\},
	\end{equation}
	where $C_{K_{I},K_{v},m}^{(\ref*{lem:upper.lowerdim})}$ is a constant depending only on $K_{I}$, $K_{v}$, and $m$.
}

\begin{proof}[Proof of Lemma~\ref{lem:upper.lowerdim}]
When $d_{1}=1$, the length of TSP path is bounded by the length of the curve
$vol_{M}(M)$ as in Figure \ref{fig:upper.curve}, and Lemma
\ref{lem:regular.bounded.volume} implies 
$
vol_{M}(M)\leq C_{K_{I},m}^{(\ref*{lem:regular.bounded.volume})}\max\left\{1,\tau_{g}^{1-m}\right\},
$
hence
$C_{K_{I},K_{v},m}^{(\ref*{lem:upper.lowerdim})}$ can be set as
$C_{K_{I},K_{v},m}^{(\ref*{lem:upper.lowerdim})}=C_{K_{I},m}^{(\ref*{lem:regular.bounded.volume})}$, as described
before.

Consider $d_{1}>1$, and let $r:=2\sqrt{3}\tau_{g}$. By scaling the space-filling curve in Lemma
\ref{lem:proof.upper.spacefilling}, there exists a surjective map
$\psi_{d_{1}}:[0,1]\rightarrow[-r,r]^{d_{1}}$ and
$\psi_{m}:[0,1]\rightarrow[-K_{I},K_{I}]^{m}$ that satisfies
\begin{align}
\label{eq:proof.upper.spacefilling.d1} 0\leq\forall s,t\leq1, & \  \|\psi_{d_{1}}(s)-\psi_{d_{1}}(t)\|_{\mathbb{R}^{d_{1}}}\leq4r\sqrt{d_{1}+3}|s-t|^{1/d_{1}}\\
\label{eq:proof.upper.spacefilling.m} 0\leq\forall s,t\leq1, & \  \|\psi_{m}(s)-\psi_{m}(t)\|_{\mathbb{R}^{m}}\leq4K_{I}\sqrt{m+3}|s-t|^{1/m}
\end{align}

Now, from Lemma
\ref{lem:regular.bounded.covernumber}, $M$ can be covered by $N$ balls 
of radius $r$, denoted by
\begin{equation}
\label{eq:proof.upper.covering}
B_{M}(p_{1},r),\,\ldots,\,B_{M}(p_{N},r),
\end{equation}
with $N\leq\left\lfloor \frac{2^{d_{1}}vol_{M}(M)}{K_{v}r^{d_{1}}\omega_{d_{1}}}\right\rfloor$. 
Since $\psi_{m}:[0,1]\rightarrow[-K_{I},K_{I}]^{m}$ in \eqref{eq:proof.upper.spacefilling.m} is surjective,
we can find a right inverse
$\Psi_{m}:[-K_{I},K_{I}]^{m}\rightarrow[0,1]$ that satisfies
$\psi_{m}(\Psi_{m}(p))=p$, i.e.
\begin{equation}
\label{eq:proof.upper.rightinverse.m}
\xymatrix{
[0,1]\ar@/^/[rr]^{\psi_{m}} & & [-K_{I},K_{I}]^{m}.\ar@/^/[ll]^{\Psi_{m}}
}
\end{equation}
Reindex $p_{k}$ with respect to $\Psi_{m}$ so that
\begin{equation}
\label{eq:proof.upper.order.m}
\Psi_{m}(p_{1})<\cdots<\Psi_{m}(p_{N}).
\end{equation}
Now fix $k$, and consider the ball $B_{M}(p_{k},r)$ in the covering in \eqref{eq:proof.upper.covering}. Then for all $p\in B_{M}(p_{k},r)$, since
$d_{M}(p_{k},p)<r$, the condition (3) in Definition \ref{def:regular.manifold} implies that we can find $\varphi_{k}(p)\in
B_{\mathbb{R}^{d_{1}}}(0,r)$ such that
$\exp_{p_{k}}(\varphi_{k}(p))=p$. So this shows
\[
B_{M}(p_{k},r)\subset\exp_{p_{k}}\left(B_{\mathbb{R}^{d_{1}}}(0,r)\right).
\]
Now consider the composition of the exponential map $\exp_{p_{k}}$ and $\psi_{d_{1}}$ in \eqref{eq:proof.upper.spacefilling.d1},  $\exp_{p_{k}}\circ\psi_{d_{1}}:[0,1]\rightarrow M$. Then 
\[
B_{M}(p_{k},r)\subset\exp_{p_{k}}\left(B_{\mathbb{R}^{d_{1}}}(0,r)\right)\subset\exp_{p_{k}}\left([-r,r]^{d_{1}}\right)=\exp_{p_{k}}\circ\psi_{d_{1}}\left([0,1]\right),
\]
where the last equality is from that $\psi_{d_{1}}$ in \eqref{eq:proof.upper.spacefilling.d1} is surjective. So $\exp_{p_{k}}\circ\psi_{d_{1}}:[0,1]\rightarrow M$ is surjective on $B_{M}(p,r)$, so we can find right inverse $\Psi_{k}:B_{M}(p_{k},r)\rightarrow[0,1]$ that satisfies $(\exp_{p_{k}}\circ\psi_{d_{1}})(\Psi_{k}(p))=p$, i.e.
\begin{equation}
\label{eq:proof.upper.rightinverse.k}
\xymatrix{
[0,1]\ar@/^/[rr]^{\psi_{d_{1}}} & & [-r,r]\ar@/^/[rr]^{\exp_{p_{k}}} & & M \supset B_{M}(p_{k},r).\ar@/^1pc/[llll]^{\Psi_{k}}
}
\end{equation}
Then, reindex $X_{1},\ldots,X_{n}$ with respect to $\Psi_{m}$ and
$\Psi_{k}$ as $\{X_{k,j}\}_{1\leq k\leq N,\ 1\leq j\leq n_{k}}$,
where $X_{k,1},\ldots,X_{k,n_{k}}\in B_{M}(p_{k},r)$ and 
\begin{equation}
\label{eq:proof.upper.order.k}
\Psi_{k}(X_{k,1})<\cdots<\Psi_{k}(X_{k,n_{k}}).
\end{equation}
Let $\sigma\in S_{n}$ be the corresponding order of index, so that the
$d_{1}$-squared length of the path $\overset{n-1}{\underset{i=1}{\sum}}\|X_{\sigma(i+1)}-X_{\sigma(i)}\|_{\mathbb{R}^{m}}^{d_{1}}$
is factorized as 
\begin{equation}
\label{eq:proof.upper.path.factorize}
\overset{n-1}{\underset{i=1}{\sum}}\|X_{\sigma(i+1)}-X_{\sigma(i)}\|_{\mathbb{R}^{m}}^{d_{1}}=\overset{N}{\underset{k=1}{\sum}}\overset{n_{k}-1}{\underset{j=1}{\sum}}\|X_{k,j+1}-X_{k,j}\|_{\mathbb{R}^{m}}^{d_{1}}+\overset{N-1}{\underset{k=1}{\sum}}\|X_{k+1,1}-X_{k,n_{k}}\|_{\mathbb{R}^{m}}^{d_{1}}.
\end{equation}
First, consider the first term $\overset{N}{\underset{k=1}{\sum}}\overset{n_{k}-1}{\underset{j=1}{\sum}}\|X_{k,j+1}-X_{k,j}\|_{\mathbb{R}^{m}}^{d_{1}}$
in \eqref{eq:proof.upper.path.factorize}. For all $1\leq k\leq N$, by applying Lemma~\ref{lem:regular.bounded.length}, $\overset{n_{k}-1}{\underset{j=1}{\sum}}\|X_{k,j+1}-X_{k,j}\|_{\mathbb{R}^{m}}^{d_{1}}$
is upper bounded as 
\begin{align*}
& \overset{n_{k}-1}{\underset{j=1}{\sum}}\|X_{k,j+1}-X_{k,j}\|_{\mathbb{R}^{m}}^{d_{1}}\\
& \leq\overset{n_{k}-1}{\underset{j=1}{\sum}}\|(\exp_{p_{k}}\circ\psi_{d_{1}})(\Psi_{k}(X_{k,j+1}))-(\exp_{p_{k}}\circ\psi_{d_{1}})(\Psi_{k}(X_{k,j}))\|_{\mathbb{R}^{m}}^{d_{1}}\ (\text{from }\eqref{eq:proof.upper.rightinverse.k})\\
& \leq\left(\frac{\sinh(\sqrt{2}r/\tau_{\ell})}{\sqrt{2}r/\tau_{\ell}}\right)^{d_{1}}\overset{n_{k}-1}{\underset{j=1}{\sum}}\|\psi_{d_{1}}(\Psi_{k}(X_{k,j+1}))-\psi_{d_{1}}(\Psi_{k}(X_{k,j}))\|_{\mathbb{R}^{d_{1}}}^{d_{1}}\ (\text{Lemma }\ref{lem:regular.bounded.length})\\
& \leq\left(\frac{2\sqrt{2(d_{1}+3)}\sinh(\sqrt{2}r/\tau_{\ell})}{r/\tau_{\ell}}\right)^{d_{1}}r^{d_{1}}\overset{n_{k}-1}{\underset{j=1}{\sum}}|\Psi_{k}(X_{k,j+1})-\Psi_{k}(X_{k,j})|\ (\text{from }\eqref{eq:proof.upper.spacefilling.d1})\\
& \leq\left(\frac{2\sqrt{2(d_{1}+3)}\sinh(\sqrt{2}r/\tau_{\ell})}{r/\tau_{\ell}}\right)^{d_{1}}r^{d_{1}}\ (\text{from }\eqref{eq:proof.upper.order.k}).
\end{align*}
Then, by applying the fact that $r = 2\sqrt{3}\tau_{g} \leq 2\sqrt{3}\tau_{\ell}$ and that $t\mapsto\frac{\sinh t}{t}$ is an increasing function on $t\geq0$
to this, we have an upper bound of $\overset{n_{k}-1}{\underset{j=1}{\sum}}\|X_{k,j+1}-X_{k,j}\|_{\mathbb{R}^{m}}^{d_{1}}$
as
\begin{equation}
\label{eq:proof.upper.path.factorize.withinball}
\overset{n_{k}-1}{\underset{j=1}{\sum}}\|X_{k,j+1}-X_{k,j}\|_{\mathbb{R}^{m}}^{d_{1}} \leq\left(\frac{\sqrt{2(d_{1}+3)}\sinh2\sqrt{6}}{\sqrt{3}}\right)^{d_{1}}r^{d_{1}}.
\end{equation}
And then, the second term  $\overset{N-1}{\underset{k=1}{\sum}}\|X_{k+1,1}-X_{k,n_{k}}\|_{\mathbb{R}^{m}}^{d_{1}}$
in \eqref{eq:proof.upper.path.factorize} is upper bounded as 
\begin{align}
\label{eq:proof.upper.path.factorize.betweenball}
& \overset{N-1}{\underset{k=1}{\sum}}\|X_{k+1,1}-X_{k,n_{k}}\|_{\mathbb{R}^{m}}^{d_{1}}\nonumber \\
& \leq 3^{d_{1}-1}\overset{N-1}{\underset{k=1}{\sum}}\left(\|X_{k+1,1}-p_{k+1}\|_{\mathbb{R}^{m}}^{d_{1}}+\|p_{k+1}-p_{k}\|_{\mathbb{R}^{m}}^{d_{1}}+\|p_{k}-X_{k,n_{k}}\|_{\mathbb{R}^{m}}^{d_{1}}\right)\nonumber \\
& \leq 2\cdot 3^{d_{1}-1}(N-1)r^{d_{1}}+3^{d_{1}-1}\overset{N-1}{\underset{k=1}{\sum}}\|\psi_{m}(\Psi_{m}(p_{k+1}))-\psi_{m}(\Psi_{m}(p_{k}))\|_{\mathbb{R}^{d_{1}}}^{d_{1}}\ (\text{from }\eqref{eq:proof.upper.rightinverse.m})\nonumber \\
& < 3^{d_{1}}(N-1)r^{d_{1}}+2\cdot 3^{d_{1}}\sqrt{m+3}K_{I}\overset{N-1}{\underset{k=1}{\sum}}|\Psi_{m}(p_{k+1})-\Psi_{m}(p_{k})|^{\frac{d_{1}}{m}}\ (\text{from }\eqref{eq:proof.upper.spacefilling.m})\nonumber \\
& \leq 3^{d_{1}}(N-1)r^{d_{1}}+\nonumber \\
& \qquad 2 \cdot 3^{d_{1}}\sqrt{m+3}K_{I}\left(\overset{N-1}{\underset{k=1}{\sum}}|\Psi_{m}(p_{k+1})-\Psi_{m}(p_{k})|^{\frac{d_{1}}{m}\times\frac{m}{d_{1}}}\right)^{\frac{d_{1}}{m}}\left(\overset{N-1}{\underset{k=1}{\sum}}1^{\frac{m}{m-d_{1}}}\right)^{\frac{m-d_{1}}{m}}\nonumber \\
&\qquad(\text{using H{\"o}lder's inequality})\nonumber \\
& \leq 3^{d_{1}}(N-1)r^{d_{1}}+2\cdot 3^{d_{1}}\sqrt{m+3}K_{I}(N-1)^{1-\frac{d_{1}}{m}}\ (\text{from }\eqref{eq:proof.upper.order.m}).
\end{align}
Hence, by plugging in \eqref{eq:proof.upper.path.factorize.withinball} and \eqref{eq:proof.upper.path.factorize.betweenball} to \eqref{eq:proof.upper.path.factorize}, $\overset{n-1}{\underset{i=1}{\sum}}\|X_{\sigma(i+1)}-X_{\sigma(i)}\|_{\mathbb{R}^{m}}^{d_{1}}$ is upper bounded 
as 
\begin{align*}
& \overset{n-1}{\underset{i=1}{\sum}}\|X_{\sigma(i+1)}-X_{\sigma(i)}\|_{\mathbb{R}^{m}}^{d_{1}}\\
& <\left(\left(\frac{\sqrt{2(d_{1}+3)}\sinh2\sqrt{6}}{\sqrt{3}}\right)^{d_{1}}+3^{d_{1}}\right)r^{d_{1}}N+2\cdot 3^{d_{1}}\sqrt{m+3}K_{I}N^{1-\frac{d_{1}}{m}}\\
& <\frac{\left(2\sqrt{d_{1}+3}\sinh2\sqrt{6}\right)^{d_{1}}+6^{d_{1}}}{K_{v}\omega_{d_{1}}}vol_{M}(M) +\frac{2\cdot 3^{\frac{d_{1}}{2}}\sqrt{m+3}K_{I}}{\left(K_{v}\omega_{d_{1}}\right)^{1-\frac{d_{1}}{m}}}\tau_{g}^{d_{1}\left(\frac{d_{1}}{m}-1\right)}\left(vol_{M}(M)\right)^{1-\frac{d_{1}}{m}}\\
& \leq\frac{\left(2(\sinh2\sqrt{6})\sqrt{m+3}\right)^{d_{1}}2K_{I}}{\min\left\{ 1,K_{v}\omega_{d_{1}}\right\} }\times \\
& \quad\left(C_{K_{I},m}^{(3)}\max\left\{ 1,\tau_{g}^{d_{1}-m}\right\} +\tau_{g}^{d_{1}\left(\frac{d_{1}}{m}-1\right)}\left(C_{K_{I},m}^{(3)}\max\left\{ 1,\tau_{g}^{d_{1}-m}\right\} \right)^{1-\frac{d_{1}}{m}}\right)\ (\text{from Lemma~\ref{lem:regular.bounded.volume}})\\
& \leq C_{K_{I},K_{v},m}^{(\ref*{lem:upper.lowerdim})}\max\left\{1,\tau_{g}^{d_{1}-m}\right\},
\end{align*}
with some constant $C_{K_{I},K_{v},m}^{(\ref*{lem:upper.lowerdim})}$
which depends only on $m$, $K_{v}$, and $K_{I}$.
Hence we have the same upper bound for $\underset{\sigma\in S_{n}}{\min}\overset{n-1}{\underset{i=1}{\sum}}\|X_{\sigma(i+1)}-X_{\sigma(i)}\|_{\mathbb{R}^{m}}^{d_{1}}$
as well, as in \eqref{eq:proof.upper.lowerdim}. \end{proof}

\noindent \textbf{Proposition~\ref{prop:upper.maximumrisk}.} \textit{Fix $\tau_{g},\,\tau_{\ell}\in(0,\infty]$, $K_{I}\in[1,\infty)$,
	$K_{v}\in(0,2^{-m}]$, $K_{p}\in[(2K_{I})^{m},\infty)$,
	$d_{1},\,d_{2}\in\mathbb{N}$, with $\tau_{g}\leq\tau_{\ell}$ and
	$1\leq d_{1} < d_{2} \leq m$. Let $\hat{d}_{n}$ be in \eqref{eq:upper.dim.estimator}. Then either for $d=d_{1}$ or $d=d_{2}$,
	\begin{align}
	\label{eq:proof.upper.maximumrisk}
	& \underset{P\in\mathcal{P}_{\tau_{g},\tau_{\ell},K_{I},K_{v},K_{p}}^{d}}{\sup}
	\mathbb{E}_{P^{(n)}}\left[\ell\left(\hat{d}_{n},d(P)\right)\right]\nonumber \\
	& \leq 1(d=d_{2})\left(C_{K_{I},K_{p},K_{v},m}^{(\ref*{prop:upper.maximumrisk})}\right)^{n}
	\max\left\{1,\tau_{g}^{-\left(\frac{d_{2}}{d_{1}}m+m-2d_{2}\right)n}\right\}n^{-\left(\frac{d_{2}}{d_{1}}-1\right)n},
	\end{align}
	where $C_{K_{I},K_{p},K_{v},m}^{(\ref*{prop:upper.maximumrisk})}\in(0,\infty)$ is a constant depending only on $K_{I}$, $K_{p}$, $K_{v}$, and $m$.
}
\begin{proof}[Proof of Proposition~\ref{prop:upper.maximumrisk}]
Consider first the case $d = d_{1}$.
Then for all $P\in\mathcal{P}_{\tau_{g},\tau_{\ell},K_{I},K_{v},K_{p}}^{d_{1}}$ and $X_{1},\ldots,X_{n}\sim P$, by Lemma~\ref{lem:upper.lowerdim}, 
\[
\underset{\sigma\in S_{n}}{\min}\left\{ \overset{n-1}{\underset{i=1}{\sum}}\|X_{\sigma(i+1)}-X_{\sigma(i)}\|_{\mathbb{R}^{m}}^{d_{1}}\right\}\leq C_{K_{I},K_{v},m}^{(\ref*{lem:upper.lowerdim})}\max\left\{1,\tau_{g}^{d_{1}-m}\right\},
\]
hence $\hat{d}_{n}$ in \eqref{eq:upper.dim.estimator} always satisfies $\hat{d}_{n}(X)=d_{1}=d(P)$,
i.e. the risk of $\hat{d}_{n}$ satisfies
\begin{equation}
\label{eq:proof.upper.estimator.upperdim}
P^{(n)}\left[\hat{d}_{n}(X_{1},\ldots,X_{n})=d_{2}\right]=0.
\end{equation}
For the case when $d=d_{2}$, for all $P\in\mathcal{P}_{\tau_{g},\tau_{\ell},K_{I},K_{v},K_{p}}^{d_{2}}$, the risk of $\hat{d}_{n}$ in \eqref{eq:upper.dim.estimator} is upper bounded as
\begin{align}
\label{eq:proof.upper.estimator.lowerdim}
 & P^{(n)}\left[\hat{d}_{n}(X_{1},\ldots,X_{n})=d_{1}\right]\nonumber \\
 & = P\left[\underset{\sigma\in S_{n}}{\bigcup}\overset{n-1}{\underset{i=1}{\sum}}|X_{\sigma(i+1)}-X_{\sigma(i)}|\leq C_{K_{I},K_{v},m}^{(\ref*{lem:upper.lowerdim})}\max\left\{1,\tau_{g}^{d_{1}-m}\right\} \right]\nonumber\\
 & \leq \underset{\sigma\in S_{n}}{\sum}P\left[\overset{n-1}{\underset{i=1}{\sum}}|X_{\sigma(i+1)}-X_{\sigma(i)}|\leq C_{K_{I},K_{v},m}^{(\ref*{lem:upper.lowerdim})}\max\left\{1,\tau_{g}^{d_{1}-m}\right\} \right]\nonumber\\
 & = n!P\left[\overset{n-1}{\underset{i=1}{\sum}}|X_{i+1}-X_{i}|\leq C_{K_{I},K_{v},m}^{(\ref*{lem:upper.lowerdim})}\max\left\{1,\tau_{g}^{d_{1}-m}\right\} \right]\nonumber\\
 & = \frac{n\left(C_{K_{I},K_{p},m}^{(\ref*{lem:upper.higherdim})}\right)^{n-1}\left(C_{K_{I},K_{v},m}^{(\ref*{lem:upper.lowerdim})}\max\left\{1,\tau_{g}^{d_{1}-m}\right\} \right)^{\frac{d_{2}}{d_{1}}(n-1)}\max\left\{1,\tau_{g}^{(d_{2}-m)(n-1)}\right\}}{(n-1)^{\left(\frac{d_{2}}{d_{1}}-1\right)(n-1)}},
\end{align}
where the last line is implied by Lemma~\ref{lem:upper.higherdim}. Therefore, by combining \eqref{eq:proof.upper.estimator.upperdim} and \eqref{eq:proof.upper.estimator.lowerdim}, the risk is upper bounded as in \eqref{eq:proof.upper.maximumrisk}, as
\begin{align*}
 & \underset{P\in\mathcal{P}_{\tau_{g},\tau_{\ell},K_{I},K_{v},K_{p}}^{d}}{\sup}
\mathbb{E}_{P^{(n)}}\left[\ell\left(\hat{d}_{n},d(P)\right)\right]\\
 & \leq 1(d=d_{2})\frac{n\left(C_{K_{I},K_{p},m}^{(\ref*{lem:upper.higherdim})}\left(C_{K_{I},K_{v},m}^{(\ref*{lem:upper.lowerdim})}\right)^{\frac{d_{2}}{d_{1}}}\right)^{n-1}\max\left\{1,\tau_{g}^{-\left(\frac{d_{2}}{d_{1}}m+m-2d_{2}\right)(n-1)}\right\}}{(n-1)^{\left(\frac{d_{2}}{d_{1}}-1\right)(n-1)}}\\
 & \leq 1(d=d_{2})\left(C_{K_{I},K_{p},K_{v},m}^{(\ref*{prop:upper.maximumrisk})}\right)^{n}\max\left\{1,\tau_{g}^{-\left(\frac{d_{2}}{d_{1}}m+m-2d_{2}\right)n}\right\}n^{-\left(\frac{d_{2}}{d_{1}}-1\right)n},
\end{align*}
for some $C_{K_{I},K_{p},K_{v},m}^{(\ref*{prop:upper.maximumrisk})}$ that depends only on $K_{I}$, $K_{p}$, $K_{v}$, and $m$.
\end{proof}

\noindent \textbf{Proposition~\ref{prop:upper.bound}.} \textit{Fix $\tau_{g},\,\tau_{\ell}\in(0,\infty]$, $K_{I}\in[1,\infty)$,
	$K_{v}\in(0,2^{-m}]$, $K_{p}\in[(2K_{I})^{m},\infty)$,
	$d_{1},\,d_{2}\in\mathbb{N}$, with $\tau_{g}\leq\tau_{\ell}$ and
	$1\leq d_{1} < d_{2} \leq m$. Then
	\begin{align}
	\label{eq:proof.upper.bound}
	& \underset{\hat{d}_{n}}{\inf}
	\underset{P\in\mathcal{P}_{1}\cup\mathcal{P}_{2}}{\sup}
	\mathbb{E}_{P^{(n)}}\left[\ell\left(\hat{d}_{n},d(P)\right)\right]\nonumber \\
	& \leq \left(C_{K_{I},K_{p},K_{v},m}^{(\ref*{prop:upper.maximumrisk})}\right)^{n}
	\max\left\{1,\tau_{g}^{-\left(\frac{d_{2}}{d_{1}}m+m-2d_{2}\right)n}\right\}n^{-\left(\frac{d_{2}}{d_{1}}-1\right)n},
	\end{align}
	where $C_{K_{I},K_{p},K_{v},m}^{(\ref*{prop:upper.maximumrisk})}$ is from Proposition~\ref{prop:upper.maximumrisk} and
	\[
	\mathcal{P}_1 =
	\mathcal{P}_{\tau_{g},\tau_{\ell},K_{I},K_{v},K_{p}}^{d_{1}},\ \ \ 
	\mathcal{P}_2 =
	\mathcal{P}_{\tau_{g},\tau_{\ell},K_{I},K_{v},K_{p}}^{d_{2}}.
	\]
}

\begin{proof}[Proof of Proposition~\ref{prop:upper.bound}]
	Applying Proposition~\ref{prop:upper.maximumrisk} to \eqref{eq:upper.maximumrisk.bound} yields
	\begin{align*}
	& \underset{\hat{d}_{n}}{\inf}
	\underset{P\in\mathcal{P}_{\tau_{g},\tau_{\ell},K_{I},K_{v},K_{p}}^{d_{1}}\cup\mathcal{P}_{\tau_{g},\tau_{\ell},K_{I},K_{v},K_{p}}^{d_{2}}}{\sup}
	\mathbb{E}_{P^{(n)}}\left[\ell\left(\hat{d}_{n},d(P)\right)\right] \\
	& \leq
	\underset{P\in\mathcal{P}_{\tau_{g},\tau_{\ell},K_{I},K_{v},K_{p}}^{d_{1}}\cup\mathcal{P}_{\tau_{g},\tau_{\ell},K_{I},K_{v},K_{p}}^{d_{2}}}{\sup}
	\mathbb{E}_{P^{(n)}}\left[\ell\left(\hat{d}_{n},d(P)\right)\right] \\
	& \leq \left(C_{K_{I},K_{p},K_{v},m}^{(\ref*{prop:upper.maximumrisk})}\right)^{n}\max\left\{1,\tau_{g}^{-\left(\frac{d_{2}}{d_{1}}m+m-2d_{2}\right)n}\right\}n^{-\left(\frac{d_{2}}{d_{1}}-1\right)n}.
	\end{align*}
	Hence the minimax rate $R_{n}$ in \eqref{eq:regular.minimax} is upper bounded as in \eqref{eq:proof.upper.bound}.
\end{proof}

\section{Proofs for Section \ref{sec:lower}}
\label{sec:proof.lower}

\noindent \textbf{Lemma~\ref{lem:lower.cylinder.regularity}.} \textit{Fix $\tau_{g},\,\tau_{\ell}\in(0,\infty]$,
	$K_{I}\in[1,\infty)$, $K_{v}\in(0,2^{-m}]$, $d,\,\Delta
	d\in\mathbb{N}$, with $\tau_{g}\leq\tau_{\ell}$ and $1\leq d+\Delta d
	\leq m$. Let $M\in\mathcal{M}_{\tau_{g},\tau_{\ell},K_{I},K_{v}}^{d}$
	be a $d$-dimensional manifold of global reach $\geq\tau_{g}$, local
	reach $\geq\tau_{\ell}$, which is embedded in $\mathbb{R}^{m-\Delta d}$.
	Then
	\begin{equation}
		\label{eq:proof.lower.cylinder.regularity}
		M\times[-K_{I},K_{I}]^{\Delta d}\in\mathcal{M}_{\tau_{g},\tau_{\ell},K_{I},K_{v}}^{d+\Delta d},
	\end{equation}
	which is embedded in $\mathbb{R}^{m}$.
}

\begin{proof}[Proof of Lemma~\ref{lem:lower.cylinder.regularity}]
	For showing \eqref{eq:proof.lower.cylinder.regularity}, we need to
	show 4 conditions in Definition \ref{def:regular.manifold}. 
	The other
	conditions are rather obvious and the critical condition is (2),
	i.e. the global reach condition and the local reach condition. Showing the local
	reach condition is almost identical to showing the global reach condition,
	so we will focus on the global reach condition. From the definition of the
	global reach in Definition \ref{def:definition.reach}, we need to show
	that for all $x\in\mathbb{R}^{m}$ with
	$dist_{\mathbb{R}^{m}}(x,M\times[-K_{I},K_{I}]^{\Delta d})<\tau_{g}$,
	$x$ has the unique closest point $\pi_{M\times[-K_{I},K_{I}]^{\Delta  d}}(x)$ on $M\times[-K_{I},K_{I}]$.
	
	Let $x\in\mathbb{R}^{m}$ be satisfying $dist_{\mathbb{R}^{m}}(x,M\times[-K_{I},K_{I}]^{\Delta d})<\tau_{g}$,
	and let $y\in M\times[-K_{I},K_{I}]^{\Delta d}$. Then the distance
	between $x$ and $y$ can be factorized as their distance on first
	$m-\Delta d$ coordinates and last $\Delta d$ coordinates, 
	\begin{align}
		\label{eq:proof.lower.cylinderdistance.factorize}
		& dist_{\mathbb{R}^{m}}\left(x,\,y\right)\nonumber\\
		& =\sqrt{dist_{\mathbb{R}^{m-\Delta d}}\left(\Pi_{1:m-\Delta d}(x),\,
		\Pi_{1:m-\Delta d}\left(y\right)\right)^{2}+
		dist_{\mathbb{R}^{\Delta d}}\left(\Pi_{(m-\Delta d+1):m}(x),\,\Pi_{(m-\Delta d+1):m}(y)\right)^{2}}.
	\end{align}
	For the first term in
	\eqref{eq:proof.lower.cylinderdistance.factorize}, note that the
	projection map $\Pi_{1:m-\Delta   d}:\mathbb{R}^{m}\rightarrow\mathbb{R}^{m-\Delta d}$ is
	a contraction, i.e. for all $x,y\in\mathbb{R}^{m}$,
	$dist_{\mathbb{R}^{m-\Delta d}}(\Pi_{1:m-\Delta
		d}(x),\,\Pi_{1:m-\Delta d}(y))\leq dist_{\mathbb{R}^{m}}(x,\,y)$
	holds, so $\Pi_{1:m-\Delta d}(x)$ is also within
	a $\tau_{g}$-neighborhood of $M=\Pi_{1:m-\Delta
		d}(M\times[-K_{I},K_{I}]^{\Delta d})$, i.e.
	\begin{align*}
		dist_{\mathbb{R}^{m-\Delta d}}\left(\Pi_{1:m-\Delta d}(x),\,M\right) & =dist_{\mathbb{R}^{m-\Delta d}}\left(\Pi_{1:m-\Delta d}(x),\,\Pi_{1:m-\Delta d}(M\times[-K_{I},K_{I}]^{\Delta d})\right)\\
		& \leq dist_{\mathbb{R}^{m}}(x,\,M\times[-K_{I},K_{I}]^{\Delta d})<\tau_{g}.
	\end{align*}
	Hence from the definition of the global reach in Definition \ref{def:definition.reach}, $\pi_{M}\left(\Pi_{1:m-\Delta d}(x)\right)\in M$ uniquely exists. And from $\Pi_{1:m-\Delta d}(y)\in M$,
	the distance between $\Pi_{1:m-\Delta d}(x)$ and $\Pi_{1:m-\Delta d}\left(y\right)$
	is lower bounded by the distance between $\Pi_{1:m-\Delta d}(x)$ and $M$,
	i.e. 
	\begin{align}
		\label{eq:proof.lower.cylinderdistance.factorize.first}
		dist_{\mathbb{R}^{m-\Delta d}}\left(\Pi_{1:m-\Delta d}(x),\,\Pi_{1:m-\Delta d}\left(y\right)\right) & \geq dist_{\mathbb{R}^{m-\Delta d}}\left(\Pi_{1:m-\Delta d}(x),\,\pi_{M}\left(\Pi_{1:m-\Delta d}(x)\right)\right)\nonumber \\
		& =dist_{\mathbb{R}^{m-\Delta d}}\left(\Pi_{1:m-\Delta d}(x),\,M\right),
	\end{align}
	and the equality holds if and only if $\Pi_{1:m-\Delta d}\left(y\right)=\pi_{M}\left(\Pi_{1:m-\Delta d}(x)\right)$.
	
	The second term in \eqref{eq:proof.lower.cylinderdistance.factorize} is trivially lower bounded by $0$, i.e. 
	\begin{equation}
		\label{eq:proof.lower.cylinderdistance.factorize.second}
		dist_{\mathbb{R}^{\Delta d}}\left(\Pi_{(m-\Delta d+1):m}(x),\,\Pi_{(m-\Delta d+1):m}(y)\right)\geq 0,
	\end{equation}
	and the equality holds if and only if $\Pi_{(m-\Delta d+1):m}(x)=\Pi_{(m-\Delta d+1):m}(y)$.
	
	Hence by applying \eqref{eq:proof.lower.cylinderdistance.factorize.first} and \eqref{eq:proof.lower.cylinderdistance.factorize.second} to \eqref{eq:proof.lower.cylinderdistance.factorize}, $dist_{\mathbb{R}^{m}}\left(x,\,y\right)$
	is lower bounded by the distance between $\Pi_{1:m-\Delta d}(x)$ and $M$, i.e.
	\begin{align*}
		& dist_{\mathbb{R}^{m}}\left(x,\,y\right)\\
		& =\sqrt{dist_{\mathbb{R}^{m-\Delta d}}\left(\Pi_{1:m-\Delta d}(x),\,\Pi_{1:m-\Delta d}\left(y\right)\right)^{2}+dist_{\mathbb{R}^{\Delta d}}\left(\Pi_{(m-\Delta d+1):m}(x),\,\Pi_{(m-\Delta d+1):m}(y)\right)^{2}}\\
		& \geq dist_{\mathbb{R}^{m-\Delta d}}\left(\Pi_{1:m-\Delta d}(x),\,M\right),
	\end{align*}
	and the equality holds if and only if $\Pi_{1:m-\Delta d}\left(y\right)=\pi_{M}\left(\Pi_{1:m-\Delta d}(x)\right)$
	and $\Pi_{(m-\Delta d+1):m}(x)=\Pi_{(m-\Delta d+1):m}(y)$, i.e. when
	 $y=\left(\pi_{M}\left(\Pi_{1:m-\Delta d}(x)\right),\,\Pi_{(m-\Delta d+1):m}(x)\right)$.
	Hence $x$ has the unique closest point $\pi_{M\times[-K_{I},K_{I}]^{\Delta d}}(x)$
	on $M\times[-K_{I},K_{I}]$ as 
	\[
	\pi_{M\times[-K_{I},K_{I}]^{\Delta d}}(x)=\left(\pi_{M}\left(\Pi_{1:m-\Delta d}(x)\right),\ \Pi_{(m-\Delta d+1):m}(x)\right),
	\]
	as in Figure \ref{fig:proof.lower.product}.
\end{proof}

\begin{figure}
	\begin{center}
		\includegraphics{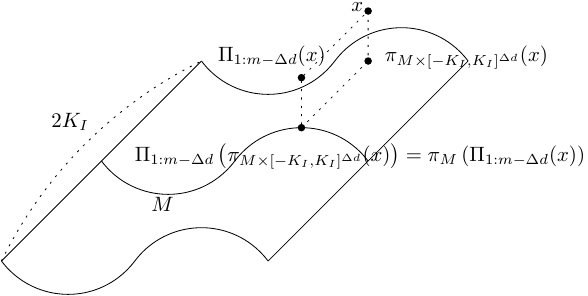}
	\end{center}
	\caption{$\pi_{M\times[-K_{I},K_{I}]^{\Delta d}}(x)$ satisfies $\Pi_{1:m-\Delta d}\left(\pi_{M\times[-K_{I},K_{I}]^{\Delta d}}(x)\right)=\pi_{M}\left(\Pi_{1:m-\Delta d}(x)\right)$.}
	\label{fig:proof.lower.product}
\end{figure}

%
%

\noindent \textbf{Lemma~\ref{lem:lower.constructT}.} \textit{Fix $\tau_{\ell}\in(0,\infty]$, $K_{I}\in[1,\infty)$,
	$d_{1},\,d_{2}\in\mathbb{N}$, with $1\leq d_{1}\leq d_{2}$, and
	suppose $\tau_{\ell} < K_{I}$. Then there exist
	$T_{1},\cdots,T_{n}\subset[-K_{I},K_{I}]^{d_{2}}$ such that:\\
	(1) The $T_{i}$'s are distinct.\\
	(2) For each $T_{i}$, there exists an isometry $\Phi_{i}$ such that
	\begin{equation}
		T_{i}=\Phi_{i}\left([-K_{I},K_{I}]^{d_{1}-1}\times[0,a]\times B_{\mathbb{R}^{d_{2}-d_{1}}}(0,w)\right),
	\end{equation}
	where $c=\left\lceil \frac{K_{I}+\tau_{\ell}}{2\tau_{\ell}}\right\rceil $, $a=\frac{K_{I}-\tau_{\ell}}{\left(d_{2}-d_{1}+\frac{1}{2}\right)\left\lceil \frac{n}{c^{d_{2}-d_{1}}}\right\rceil }$,
	and  $w=\min\left\{ \tau_{\ell},\ \frac{(d_{2}-d_{1})^{2}(K_{I}-\tau_{\ell})^{2}}{2\tau_{\ell}\left(d_{2}-d_{1}+\frac{1}{2}\right)^{2}\left(\left\lceil \frac{n}{c^{d_{2}-d_{1}}}\right\rceil +1\right)^{2}}\right\} $.\\
	(3)There exists $\mathscr{M}:\left(B_{\mathbb{R}^{d_{2}-d_{1}}}(0,w)\right)^{n}\rightarrow\mathcal{M}_{\tau_{g},\tau_{\ell},K_{I},K_{v}}^{d_{1}}$
	one-to-one such that for each $y_{i}\in B_{\mathbb{R}^{d_{2}-d_{1}}}(0,w)$, $1\leq i\leq n$, $\mathscr{M}(y_{1},\ldots,y_{n})\cap T_{i}=\Phi_{i}([-K_{I},K_{I}]^{d_{1}-1}\times[0,a]\times\{y_{i}\})$.
	Hence for any $x_{1}\in T_{1},\ldots,x_{n}\in T_{n}$, $\mathscr{M}(\{\Pi_{(d_{1}+1):d_{2}}^{-1}\Phi_{i}^{-1}(x_{i})\}_{1\leq i\leq n})$
	passes through $x_{1},\ldots,x_{n}$.
}

\begin{proof}[Proof of Lemma~\ref{lem:lower.constructT}]
	By Lemma~\ref{lem:lower.cylinder.regularity}, we only need to show the
	case for $d_{1}=1$. This is since for $d_{1}>1$ case, we can
	build the set of manifolds in
	$\mathcal{M}_{\tau_{g},\tau_{\ell},K_{I},K_{v}}^{d_{1}}$ by forming a
	Cartesian product of the manifold with the cube as in Lemma~\ref{lem:lower.cylinder.regularity}.
	
	Let
	 $b=\frac{2(d_{2}-d_{1})(K_{I}-\tau_{\ell})}{\left(d_{2}-d_{1}+\frac{1}{2}\right)\left(\left\lfloor
		\frac{n}{c^{d_{2}-d_{1}}}\right\rfloor +1\right)}$, so that
\[
	b\geq2\sqrt{2w\tau_{\ell}}\quad\text{and}\quad2\tau_{\ell}+\left\lfloor\frac{n}{c^{d_{2}-d_{1}}}\right\rfloor a+\left(\left\lfloor	\frac{n}{c^{d_{2}-d_{1}}}\right\rfloor +1\right)b=2K_{I}.
\]
With such values of $a$, $b$, and $w$, align $T_{i}$, $R_{i}$, and $A_{i}$ in a zigzag way, as in Figure \ref{fig:proof.lower.Tconstruct}\subref{subfig:proof.lower.ARTconstruct}.

	\begin{figure}
		\begin{center}
			\begin{subfigure}[b]{0.5\textwidth}
				\begin{center}
				\includegraphics[scale=0.95]{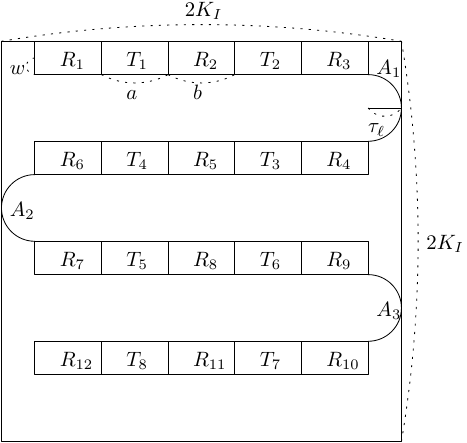}
				\end{center}
				\caption{alignment of $T_{i}$, $R_{i}$, and $A_{i}$}
				\label{subfig:proof.lower.ARTconstruct}
			\end{subfigure}
			\begin{subfigure}[b]{0.45\textwidth}
				\begin{center}
				\includegraphics[scale=0.95]{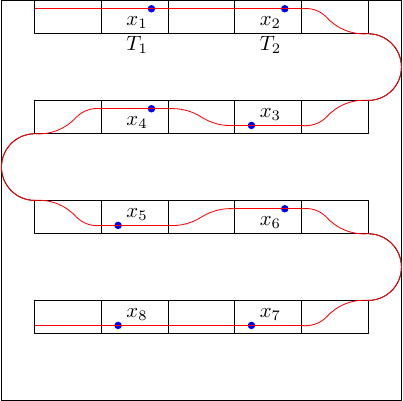}
				\end{center}
				\caption{manifold passing through $X_{i}$'s}
				\label{subfig:proof.lower.Tconstruct.manifold}
			\end{subfigure}
		\end{center}
		
		\caption{This figure illustrates the case where $d_{1}=1$ and $d_{2}=2$. \protect\subref{subfig:proof.lower.ARTconstruct}
			shows how $T_{i}$, $R_{i}$, and $A_{i}$'s are aligned in a zigzag.
			\protect\subref{subfig:proof.lower.Tconstruct.manifold} shows for given $x_{1}\in T_{1},\ldots,x_{n}\in T_{n}$ (represented as blue points), how
			 $\mathscr{M}(\{\Pi_{(d_{1}+1):d_{2}}^{-1}\Phi_{i}^{-1}(x_{i})\}_{1\leq i\leq n})$ (represented as a red curve) passes through
			 $x_{1},\ldots,x_{n}$}.
		\label{fig:proof.lower.Tconstruct}
	\end{figure}

	Then from the definition of $T_{i}$, (1) the
	$T_{i}$'s are distinct and (2) for each $T_{i}$, there exists an isometry
	$\Phi_{i}$ such that
	$T_{i}=\Phi_{i}\left([-K_{I},K_{I}]^{d_{1}-1}\times[0,a]\times
	B_{\mathbb{R}^{d_{2}-d_{1}}}(0,w)\right).$ There exists an isometry
	$\Psi_{i}$ such that
	$R_{i}=\Psi_{i}\left([-K_{I},K_{I}]^{d_{1}-1}\times[0,b]\times
	B_{\mathbb{R}^{d_{2}-d_{1}}}(0,w)\right)$ as well. Hence the conditions (1) and (2) are satisfied.
	
	We are left to define $\mathscr{M}$ that satisfies the condition (3). Now define a map from a set of points to a set of manifolds $\mathscr{M}:\left(B_{\mathbb{R}^{d_{2}-d_{1}}}(0,w)\right)^{n}\rightarrow\mathcal{M}_{\tau_{g},\tau_{\ell},K_{I},K_{v}}^{d_{1}}$
	as follows. For each $y_{i}\in B_{\mathbb{R}^{d_{2}-d_{1}}}(0,w)$, $1\leq i\leq n$, $\overset{4}{\underset{i=1}{\bigcup}}A_{i}\subset\mathscr{M}(y_{1},\ldots,y_{n})\subset\left(\overset{4}{\underset{i=1}{\bigcup}}A_{i}\right)\bigcup\left(\underset{i=1}{\bigcup}T_{i}\right)\bigcup\left(\underset{i=1}{\bigcup}R_{i}\right)$.
	The intersection of $\mathscr{M}(y_{1},\ldots,y_{n})$ and $T_{i}$
	is a line segment $\Phi_{i}([-K_{I},K_{I}]^{d_{1}-1}\times[0,a]\times\{y_{i}\})$, as in Figure \ref{fig:proof.lower.Tconstruct}\subref{subfig:proof.lower.Tconstruct.manifold}.
	Our goal is to make $\mathscr{M}(y_{1},\ldots,y_{n})$ be $C^{1}$
	and piecewise $C^{2}$.
	
	See Figure \ref{fig:proof.lower.Rconstruct} for the construction of the intersection of $\mathscr{M}(y_{1},\ldots,y_{n})$ and $R_{i}$. Given that
	 $\mathscr{M}(y_{1},\ldots,y_{n})\cap\left(\left(\overset{4}{\underset{i=1}{\bigcup}}A_{i}\right)\bigcup\left(\underset{i=1}{\bigcup}T_{i}\right)\right)$
	is determined, two points on
	 $\mathscr{M}(y_{1},\ldots,y_{n})\cap\partial R_{i}$ are already
	determined. By translation and rotation if necessary, for all $p,q$
	with $-w \leq q \leq p \leq w$, we need to find a $C^{2}$ curve with
	 reach $\geq \tau_{\ell}$ that starts from $(0,p)\in \mathbb{R}^{2}$,
	ends at $(b,q)\in \mathbb{R}^{2}$, and the velocities at both endpoints are
	parallel to $(1,0)\in \mathbb{R}^{2}$, as in Figure \ref{fig:proof.lower.Rconstruct}\subref{subfig:proof.lower.Rconstruct.initial}.
	
	\begin{figure}
		\begin{center}
			\begin{subfigure}[b]{0.48\textwidth}
				\begin{center}
				\includegraphics[width=\textwidth]{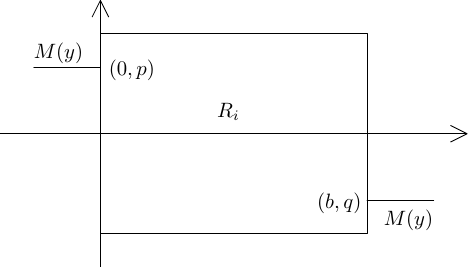}
				\end{center}
				\caption{}
				\label{subfig:proof.lower.Rconstruct.initial}
			\end{subfigure}
			\begin{subfigure}[b]{0.48\textwidth}
				\begin{center}
				\includegraphics[width=\textwidth]{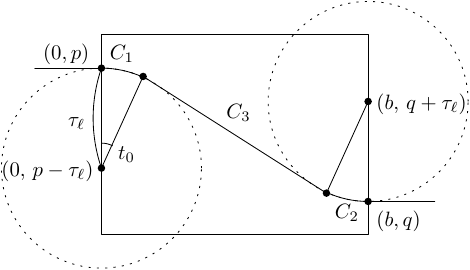}
				\end{center}
				\caption{}
				\label{subfig:proof.lower.Rconstruct.arcs}
			\end{subfigure}
		\end{center}
		\caption{\protect\subref{subfig:proof.lower.Rconstruct.initial} We need to find a $C^{2}$ curve with local reach $\geq \tau_{\ell}$ that starts
			from $(0,p)\in \mathbb{R}^{2}$, ends at $(b,q)$, and the velocities at both endpoints are parallel to $(1,0)$. \protect\subref{subfig:proof.lower.Rconstruct.arcs} $C_{1}$ and $C_{2}$ are arcs of circles of radius ${R}_{l}$, and $C_{3}$ is the cotangent segment of two circles.}
		\label{fig:proof.lower.Rconstruct}
	\end{figure}

	Let 
	\begin{equation}
		\label{eq:proof.lower.t0}
		t_{0}=\cos^{-1}\left(\frac{2\tau_{\ell}\left(2\tau_{\ell}-(p-q)\right)+b\sqrt{b^{2}-(p-q)\left(4\tau_{\ell}-(p-q)\right)}}{b^{2}+\left(2\tau_{\ell}-(p-q)\right)^{2}}\right),
	\end{equation}
	and let 
	\[
	C_{1}=\left\{ \left(0,p-\tau_{\ell}\right)+\tau_{\ell}\left(\sin t,\cos t\right)\ |\ 0\leq t\leq t_{0}\right\} .
	\]
	Then $C_{1}$ is an arc of a circle of which center is $\left(0,p-\tau_{\ell}\right)$,
	and starts at $(0,p)$ when $t=0$ and ends at $\left(\tau_{\ell}\sin t_{0},\ p-\tau_{\ell}(1-\cos t_{0})\right)$
	when $t=t_{0}.$ Also, the normalized velocities of $C_{1}$ at endpoints
	are 
	\begin{equation}
		\label{eq:proof.lower.C1velocity}
		(1,\,0)\text{ at }(0,\,p),\qquad(\cos t_{0},\,-\sin t_{0})\text{ at }\left(\tau_{\ell}\sin t_{0},\ p-\tau_{\ell}(1-\cos t_{0})\right).
	\end{equation}
	Similarly, let 
	\[
	C_{2}=\left\{ \left(b,q+\tau_{\ell}\right)-\tau_{\ell}\left(\sin t,\cos t\right)\ |\ 0\leq t\leq t_{0}\right\} .
	\]
	Then $C_{2}$ is an arc of a circle of whose center is $\left(b,q+\tau_{\ell}\right)$,
	and starts at $(b,q)$ when $t=0$ and ends at $\left(b-\tau_{\ell}\sin t_{0},\ q+\tau_{\ell}\left(1-\cos t_{0}\right)\right)$
	when $t=t_{0}$. Also, the normalized velocities of $C_{2}$ at endpoints
	are 
	\begin{equation}
		\label{eq:proof.lower.C2velocity}
		(-1,\,0)\text{ at }(b,\,q),\qquad(-\cos t_{0},\,\sin t_{0})\text{ at }\left(b-\tau_{\ell}\sin t_{0},\ q+\tau_{\ell}\left(1-\cos t_{0}\right)\right).
	\end{equation}
	Let 
	\begin{align*}
	C_{3} & =\Bigl\{(1-s)\left(\tau_{\ell}\sin t_{0},\ p-\tau_{\ell}(1-\cos t_{0})\right)+s\left(b-\tau_{\ell}\sin t_{0},\ q+\tau_{\ell}\left(1-\cos t_{0}\right)\right)\\
	& \qquad|\ 0\leq s\leq1\Bigr\},
	\end{align*}
	so that $C_{3}$ is a segment joining $\left(\tau_{\ell}\sin t_{0},\ p-\tau_{\ell}(1-\cos t_{0})\right)$
	(when $s=0$) and $(b-\tau_{\ell}\sin t_{0},\ q+\tau_{\ell}(1-\cos t_{0}))$
	(when $s=1$). Also, its velocity vector is 
	\begin{equation}
		\label{eq:proof.lower.C3velocity}
		\left(b-\tau_{\ell}\sin t_{0},\ q+\tau_{\ell}\left(1-\cos t_{0}\right)\right)\text{ for all }s\in[0,1].
	\end{equation}
	Then from definition of $t_{0}$ in \eqref{eq:proof.lower.t0}, 
	\[
	\cos t_{0}\left(q-p+2\tau_{\ell}\left(1-\cos t_{0}\right)\right)+\sin t_{0}\left(b-2\tau_{\ell}\sin t_{0}\right)=0,
	\]
	and this implies that $\left(b-2\tau_{\ell}\sin t_{0},\ q-p+2\tau_{\ell}\left(1-\cos t_{0}\right)\right)$
	is parallel to $\left(\cos t_{0},-\sin t_{0}\right)$. Hence the velocity
	vector of $C_{3}$ in \eqref{eq:proof.lower.C3velocity} is parallel to the velocity vector of $C_{1}$
	in \eqref{eq:proof.lower.C1velocity} at $\left(\tau_{\ell}\sin t_{0},\ p-\tau_{\ell}(1-\cos t_{0})\right)$ and the velocity vector of $C_{2}$ in \eqref{eq:proof.lower.C2velocity} at $(b-\tau_{\ell}\sin t_{0},\ q+\tau_{\ell}(1-\cos t_{0}))$, i.e. $C_{3}$ is cotangent to both $C_{1}$
	and $C_{2}$. See Figure \ref{fig:proof.lower.Rconstruct}\subref{subfig:proof.lower.Rconstruct.arcs}.
	
	Now we check whether  is of global
	reach $\geq\tau_{\ell}$, which implies both global reach $\geq\tau_{g}$ and local reach $\geq\tau_{\ell}$ since $\tau_{g}\leq \tau_{\ell}$.
	From \cite[Theorem 3.4]{AamariKCMRW2017}, the reach $\tau(M)$ of a manifold $M$ is realized in either the global case or the local case, where the global case refers to that there exist two points $q_{1},q_{2}\in M$ with $B(\frac{q_{1}+q_{2}}{2},\tau(M))\cap M = \emptyset$, and the local case refers to that there exists an arc-length parametrized geodesic $\gamma$ such that $||\gamma''(0)||_{2}=\frac{1}{\tau(M)}$. Now from the construction, any $q_{1},q_{2}\in \mathscr{M}(y_{1},\ldots,y_{n})$ with $B(\frac{q_{1}+q_{2}}{2},\tau)\cap \mathscr{M}(y_{1},\ldots,y_{n}) = \emptyset$ can only happen when $\tau \geq \tau_{\ell}$, so it suffices to check whether any arc-length parametrized geodesics $\gamma$ satisfies $||\gamma''(0)||_{2} \leq \frac{1}{\tau_{\ell}}$. And this is satisfied since $\mathscr{M}(y_{1},\ldots,y_{n})$ is piecewise either a straight line segment or an arc of a circle of radius $\tau_{\ell}$. Hence $\mathscr{M}(y_{1},\ldots,y_{n})$
	is of global reach $\geq\tau_{\ell}$.
	%
\end{proof}

\noindent \textit{Claim} \ref{claim:lower.probability.bound}.
Let
$T=S_{n}\overset{n}{\underset{i=1}{\prod}}T_{i}$ where the $T_{i}$'s
are from Lemma~\ref{lem:lower.constructT}. Let $Q_{2}$ be the uniform distribution on $[-K_{I},K_{I}]^{d_{2}}$, and let $\mathcal{P}_{1}^{d_{1}}$ be as in \eqref{eq:lower.distribution.lower}. Then there exists $Q_{1} \in co(\mathcal{P}_{1}^{d_{1}})$ satisfying that for all $x\in intT$, there exists $r_{x}>0$ such that for
all $r<r_{x}$, 
\begin{equation}
\label{eq:proof.lower.probability.bound}
Q_{1}\left(\overset{n}{\underset{i=1}{\prod}}B_{\|\cdot\|_{\mathbb{R}^{d_{2}},\infty}}(x_{i},r)\right)\geq 2^{-n}Q_{2}\left(\overset{n}{\underset{i=1}{\prod}}B_{\|\cdot\|_{\mathbb{R}^{d_{2}},\infty}}(x_{i},r)\right).
\end{equation}

\begin{proof}[Proof of Claim~\ref{claim:lower.probability.bound}]
	Let $Q_{1}$ be from \eqref{eq:proof.lower.Q1andQ2} in Proposition~\ref{prop:lower.bound}. By symmetry, we can assume that $x\in\overset{n}{\underset{i=1}{\prod}}T_{i}$,
	i.e. $x_{1}\in T_{1},\ldots,x_{n}\in T_{n}$. Choose $r_{x}$ small
	enough so that $B(x,r_{x})\subset int T$. Then for all $r<r_{x}$,
	from the definition of $Q_{1}$ in \eqref{eq:proof.lower.Q1andQ2}, 
	\begin{align}
		\label{eq:proof.lower.Q1ball}
		Q_{1}\left(\overset{n}{\underset{i=1}{\prod}}B_{\|\cdot\|_{\mathbb{R}^{d_{2}},\infty}}(x_{i},r)\right) & =\int_{\mathcal{P}_{1}}P^{(n)}\left(\overset{n}{\underset{i=1}{\prod}}B_{\|\cdot\|_{\mathbb{R}^{d_{2}},\infty}}(x_{i},r)\right)d\mu_{1}(P)\nonumber\\
		& =\int_{C^{n}}\Phi(y)^{(n)}\left(\overset{n}{\underset{i=1}{\prod}}B_{\|\cdot\|_{\mathbb{R}^{d_{2}},\infty}}(x_{i},r)\right)\lambda_{C^{n}}(y)\nonumber\\
		& =\int_{C^{n}}\overset{n}{\underset{i=1}{\prod}}\lambda_{\mathscr{M}(y)}\left(B_{\|\cdot\|_{\mathbb{R}^{d_{2}},\infty}}(x_{i},r)\right)\lambda_{C^{n}}(y).
	\end{align}
	Then from the condition (3) in Lemma~\ref{lem:lower.constructT}, $\mathscr{M}(y)\cap T_{i}=\Phi_{i}\left([-K_{I},K_{I}]^{d_{1}-1}\times[0,a]\times\{y_{i}\}\right)$
	holds, hence 
	\begin{align*}
		& \mathscr{M}(y)\cap B_{\|\cdot\|_{\mathbb{R}^{d_{2}},\infty}}(x_{i},r)\\
		& \begin{cases}
			= \Phi_{i}\left(B_{\|\cdot\|_{\mathbb{R}^{d_{1}},\infty}}\left(\Pi_{1:d_{1}}(\Phi_{i}^{-1}(x_{i})),\ r\right)\times\{y_{i}\}\right), & \ \text{if }\left\Vert y_{i}-\Pi_{(d_{1}+1):d_{2}}(\Phi_{i}^{-1}(x_{i}))\right\Vert _{\mathbb{R}^{d_{2}-d_{1}}}<r,\\
			\supset \emptyset, & \ \text{otherwise}.
		\end{cases}
	\end{align*}
	And hence the volume of $\mathscr{M}(y)\cap B_{\|\cdot\|_{\mathbb{R}^{d_{2}},\infty}}(x_{i},r)$
	can be lower bounded as 
	\[
	\lambda_{\mathscr{M}(y)}\left(B_{\|\cdot\|_{\mathbb{R}^{d_{2}},\infty}}(x_{i},r)\right)\geq\frac{r^{d_{1}}}{2K_{I}^{d_{1}-1}an}I\left(\left\Vert y_{i}-\Pi_{(d_{1}+1):d_{2}}(\Phi_{i}^{-1}(x_{i}))\right\Vert _{\mathbb{R}^{d_{2}-d_{1}},\infty}<r\right).
	\]
	By applying this to \eqref{eq:proof.lower.Q1ball}, $Q_{1}\left(\overset{n}{\underset{i=1}{\prod}}B_{\|\cdot\|_{\mathbb{R}^{d_{2}},\infty}}(x_{i},r)\right)$
	can be lower bounded as 
	\begin{align}
	\label{eq:proof.lower.Q1ball.lowerbound}
	& Q_{1}\left(\overset{n}{\underset{i=1}{\prod}}B_{\|\cdot\|_{\mathbb{R}^{d_{2}},\infty}}(x_{i},r)\right)\nonumber \\
	& \geq\int_{C^{n}}\overset{n}{\underset{i=1}{\prod}}\frac{r^{d_{1}}}{2K_{I}{}^{d_{1}-1}an}I\left(\left\Vert y_{i}-\Pi_{(d_{1}+1):d_{2}}(\Phi_{i}^{-1}(x_{i}))\right\Vert _{\mathbb{R}^{d_{2}-d_{1}},\infty}<r\right)\lambda_{C^{n}}(y)\nonumber \\
	& =\frac{r^{d_{1}n}}{2^{n}K_{I}^{(d_{1}-1)n}(an)^{n}}\overset{n}{\underset{i=1}{\prod}}\int_{C}I\left(\left\Vert y_{i}-\Pi_{(d_{1}+1):d_{2}}(\Phi_{i}^{-1}(x_{i}))\right\Vert _{\mathbb{R}^{d_{2}-d_{1}},\infty}<r\right)\lambda_{C}(y_{i})\nonumber \\
	& =\frac{r^{d_{1}n}}{2^{n}K_{I}^{(d_{1}-1)n}(an)^{n}}\left(\frac{(2r)^{d_{2}-d_{1}}}{w^{d_{2}-d_{1}}\omega_{d_{2}-d_{1}}}\right)^{n}\nonumber \\
	& =\frac{2^{(d_{2}-d_{1}-1)n}r^{d_{2}n}}{K_{I}^{(d_{1}-1)n}w^{(d_{2}-d_{1})n}(an)^{n}\omega_{d_{2}-d_{1}}^{n}}\nonumber \\
	& \geq\frac{2^{(d_{2}-d_{1}-1)n}r^{d_{2}n}}{K_{I}^{d_{2}n}\omega_{d_{2}-d_{1}}^{n}},
	\end{align}
	where the last inequality uses $an\leq c^{d_{2}-d_{1}}K_{I}\leq\frac{K_{I}^{d_{2}-d_{1}+1}}{\tau_{\ell}^{d_{2}-d_{1}}}$
	and $w\leq \tau_{\ell}$.
	
	On the other hand, $Q_{2}\left(\overset{n}{\underset{i=1}{\prod}}B_{\|\cdot\|_{\mathbb{R}^{d_{2}},\infty}}(x_{i},r)\right)=\left(\frac{2r}{2K_{I}}\right)^{d_{2}n}=\frac{r^{d_{2}n}}{K_{I}^{d_{2}n}}$,
	so from this and \eqref{eq:proof.lower.Q1ball.lowerbound}, we get
	\eqref{eq:proof.lower.probability.bound} as 
	\begin{align*}
	Q_{1}\left(\overset{n}{\underset{i=1}{\prod}}B_{\|\cdot\|_{\mathbb{R}^{d_{2}},\infty}}(x_{i},r)\right) & \geq\frac{2^{(d_{2}-d_{1}-1)n}}{\omega_{d_{2}-d_{1}}^{n}}Q_{2}\left(\overset{n}{\underset{i=1}{\prod}}B_{\|\cdot\|_{\mathbb{R}^{d_{2}},\infty}}(x_{i},r)\right)\\
	& \geq 2^{-n}Q_{2}\left(\overset{n}{\underset{i=1}{\prod}}B_{\|\cdot\|_{\mathbb{R}^{d_{2}},\infty}}(x_{i},r)\right).
	\end{align*}
\end{proof}

\noindent \textbf{Proposition~\ref{prop:lower.bound}.} \textit{Fix $\tau_{g},\,\tau_{\ell}\in(0,\infty]$, $K_{I}\in[1,\infty)$,
	$K_{v}\in(0,2^{-m}]$, $K_{p}\in[(2K_{I})^{m},\infty)$,
	$d_{1},\,d_{2}\in\mathbb{N}$, with $\tau_{g}\leq\tau_{\ell}$ and
	$1\leq d_{1} < d_{2} \leq m$, and suppose that $\tau_{\ell}<K_{I}$.
	Then
	\begin{align}
		\label{eq:proof.lower.bound}
		& \underset{\hat{d}_{n}}{\inf}
		\underset{P\in {\cal Q}}{\sup}
		\mathbb{E}_{P^{(n)}}[\ell(\hat{d}_{n},d(P))]\nonumber\\
		& \geq\left(C_{d_{1},d_{2},K_{I}}^{(\ref*{prop:lower.bound})}\right)^{n}\min\left\{ \tau_{\ell}^{-2(d_{2}-d_{1}+1)}n^{-2},1\right\} ^{(d_{2}-d_{1})n},
	\end{align}
	where $C_{d_{1},d_{2},K_{I}}^{(\ref*{prop:lower.bound})}\in(0,\infty)$ is a constant depending only on $d_{1}$, $d_{2}$, and $K_{I}$
	and
	\[
	{\cal Q} = 
	\mathcal{P}_{\tau_{g},\tau_{\ell},K_{I},K_{v},K_{p}}^{d_{1}}
	\bigcup\mathcal{P}_{\tau_{g},\tau_{\ell},K_{I},K_{v},K_{p}}^{d_{2}}.
	\]
}

\begin{proof}[Proof of Proposition~\ref{prop:lower.bound}]
	Let $J=[-K_{I},K_{I}]^{d_{2}}$. Let $S_{n}$ be the
	permutation group, and $S_{n}\curvearrowright J^{n}$ by coordinate
	change, i.e. $\sigma\in S_{n}$, $x\in J^{n}$, $\sigma x:=(x_{\sigma(1)},\ldots,x_{\sigma(n)})$.
	For any set $A\subset J^{n}$, let $S_{n}A:=\{\sigma x\in J^{n}:\ \sigma\in S_{n},\ x\in A\}$.
	
	Let $T_{i}$ be $T_{i}$'s from Lemma~\ref{lem:lower.constructT}.
	Let $T:=S_{n}\overset{n}{\underset{i=1}{\prod}}T_{i}$, and $V:=\overset{n}{\underset{i=1}{\bigcup}}T_{i}=\Pi_{1:d_{2}}(T)$.
	Intuitively, $T$ is the set of points $x=(x_{1},\ldots,x_{n})$ where
	$x_{i}$ lies on one of the $T_{j}$.
	
	Let $C=B_{\mathbb{R}^{d_{2}-d_{1}}}(0,w)$ where $w$ is from Lemma
	\ref{lem:lower.constructT}, and precisely define a set of $d_{1}$-dimensional
	distribution $\mathcal{P}_{1}$ in \eqref{eq:lower.distribution.lower} and a set of $d_{2}$-dimensional distribution $\mathcal{P}_{2}$ in \eqref{eq:lower.distribution.upper} as 
	\begin{align}
		\label{eq:proof.lower.P1andP2}
		\mathcal{P}_{1} & =\{P\in\mathcal{P}_{\tau_{g},\tau_{\ell},K_{I},K_{v},K_{p}}^{d_{1}}:\text{ there exists }M\in\mathscr{M}(C^{n})\text{ such that }P\text{ is uniform on }M\},\nonumber \\
		\mathcal{P}_{2} & =\{\lambda_{J}\}\subset\mathcal{P}_{\tau_{g},\tau_{\ell},K_{I},K_{v},K_{p}}^{d_{2}}.
	\end{align}

	Define a map $\Phi:C^{n}\rightarrow\mathcal{P}_{1}$ by $\Phi(y_{1},\ldots,y_{n})=\lambda_{\mathscr{M}(y_{1},\ldots,y_{n})}$,
	i.e. the uniform measure on $\mathscr{M}(y_{1},\ldots,y_{n})$. Impose
	a topology and probability measure structure on $\mathcal{P}_{1}$
	by the pushforward topology and the uniform measure on $C^{n}$, i.e.
	$\mathcal{P}'\subset\mathcal{P}_{1}$ is open if and only if $\Phi^{-1}(\mathcal{P}')$
	is open in $C^{n}$, $\mathcal{P}'\subset\mathcal{P}_{1}$ is measurable
	if and only if $\Phi^{-1}(\mathcal{P}')\in\mathcal{B}(C^{n})$, and $\mu_{1}(\mathcal{P}')=\lambda_{C^{n}}(\Phi^{-1}(\mathcal{P}'))$.
	
	Define a probability measure $Q_{1}$, $Q_{2}$ on $(J^{n},\mathcal{B}(J^{n}))$
	by 
	\begin{equation}
		\label{eq:proof.lower.Q1andQ2}
		Q_{1}(A):=\int_{\mathcal{P}_{1}}P^{(n)}(A)d\mu_{1}(P)\qquad\text{ and }\qquad Q_{2}=\lambda_{J^{n}}.
	\end{equation}
	Fix $P\in\mathcal{P}_{1}$, let $x=\Phi^{-1}(P)$. Then $P^{(n)}(A)=\lambda_{\mathscr{M}(x)}^{(n)}(A)$
	is a measurable function of $x$ and $\Phi$ is a homeomorphism. Hence,
	$p^{(n)}(A)$ is measurable function and $Q_{1}(A)$ is well defined.
	Define $\nu=Q_{1}+\lambda_{J}$. Then $Q_{1},\ Q_{2}\ll\nu$, so there
	exist densities $q_{1}=\frac{dQ_{1}}{d\nu},\ q_{2}=\frac{dQ_{2}}{d\nu}$
	with respect to $\nu$.
	
	Then by applying Le Cam's Lemma (Lemma~\ref{lem:lower.LeCam}) with $\theta(P)=d(P)$,
	$\mathcal{P}_{1}$ and $\mathcal{P}_{2}$ from \eqref{eq:proof.lower.P1andP2}, and $Q_{1}$
	and $Q_{2}$ in \eqref{eq:proof.lower.Q1andQ2}, the minimax rate $\underset{\hat{d}_{n}}{\inf}\underset{P\in\mathcal{P}_{1}\cup\mathcal{P}_{2}}{\sup}\mathbb{E}_{P}\left[\ell(\hat{d}_{n},\,d(P))\right]$
	can be lower bounded as 
	\begin{align}
	\label{eq:proof.lower.LeCambound}
	\underset{\hat{d}_{n}}{\inf}\underset{P\in\mathcal{P}_{1}\cup\mathcal{P}_{2}}{\sup}\mathbb{E}_{P}\left[\ell(\hat{d}_{n},\,d(P))\right] & \geq\frac{\ell(d_{1},\,d_{2})}{2}\int_{J^{n}}q_{1}(x)\wedge q_{2}(x)d\nu(x)\nonumber \\
	& =\frac{1}{2}\int_{J^{n}}q_{1}(x)\wedge q_{2}(x)d\nu(x).
	\end{align}
	Then from Claim~\ref{claim:lower.probability.bound}, for all $x\in intT$,
	there exists $r_{x}>0$ s.t. for all $r<r_{x}$, 
	\[
	Q_{1}\left(\overset{n}{\underset{i=1}{\prod}}B_{\|\cdot\|_{\mathbb{R}^{d_{2}},\infty}}(x_{i},r)\right)\geq 2^{-n}Q_{2}\left(\overset{n}{\underset{i=1}{\prod}}B_{\|\cdot\|_{\mathbb{R}^{d_{2}},\infty}}(x_{i},r)\right).
	\]
	Hence $q_{1}(x)$ is lower bounded by $q_{2}(x)$ whenever $x\in intT$
	as
	\[
	q_{1}(x)\geq 2^{-n}q_{2}(x)\text{ if }x\in intT,
	\]
	and  $q_{1}(x)\wedge q_{2}(x)$ is
	correspondingly lower bounded by $q_{2}(x)$ as 
	\[
	q_{1}(x)\wedge q_{2}(x)\geq 2^{-n}q_{2}(x)1(x\in intT).
	\]
	Hence the integration of $q_{1}(x)\wedge q_{2}(x)$ over $T$ is lower
	bounded as 
	\begin{equation}
		\label{eq:proof.lower.q1wedgeq2.bound}
		\frac{1}{2}\int_{T}q_{1}(x)\wedge q_{2}(x) d\nu(x)\geq 2^{-n-1}\lambda_{J^{n}}(T).
	\end{equation}
	Then from $a=\frac{K_{I}-\tau_{\ell}}{\left(d_{2}-d_{1}+\frac{1}{2}\right)\left\lceil \frac{n}{c^{d_{2}-d_{1}}}\right\rceil }$
	and $w=\min\left\{ \tau_{\ell},\ \frac{(d_{2}-d_{1})^{2}(K_{I}-\tau_{\ell})^{2}}{2\tau_{\ell}\left(d_{2}-d_{1}+\frac{1}{2}\right)^{2}\left(\left\lceil \frac{n}{c^{d_{2}-d_{1}}}\right\rceil +1\right)^{2}}\right\} $,
	$\lambda_{J^{n}}(T)$ can be lower bounded as 
	\begin{align}
		\label{eq:proof.lower.Tvolume.bound}
		\lambda_{J^{n}}\left(S_{n}\overset{n}{\underset{i=1}{\prod}}T_{i}\right) & =n!\lambda_{J^{1}}(T_{1})^{n}\nonumber \\
		& =n!\left(\frac{(2K_{I})^{d_{1}-1}\omega_{d_{2}-d_{1}}aw^{d_{2}-d_{1}}}{(2K_{I})^{d_{2}}}\right)^{n}\nonumber \\
		& \geq\left(C_{d_{1},d_{2},K_{I}}^{(\ref*{prop:lower.bound},1)}\right)^{n}\min\left\{ \tau_{\ell}^{-2(d_{2}-d_{1}+1)}n^{-2},1\right\} ^{(d_{2}-d_{1})n},
	\end{align}
	for some constant $C_{d_{1},d_{2},K_{I}}^{(\ref*{prop:lower.bound},1)}$ that depends
	only on $d_{1}$, $d_{2}$, and $K_{I}$. Hence by combining \eqref{eq:proof.lower.LeCambound}, \eqref{eq:proof.lower.q1wedgeq2.bound}, and \eqref{eq:proof.lower.Tvolume.bound}, the minimax rate $\underset{\hat{d}_{n}}{\inf}\underset{P\in\mathcal{P}_{1}\cup\mathcal{P}_{2}}{\sup}\mathbb{E}_{P}\left[\ell(\hat{d}_{n},\,d(P))\right]$
	can be lower bounded as 
	\[
	\underset{\hat{d}_{n}}{\inf}\underset{P\in\mathcal{P}_{1}\cup\mathcal{P}_{2}}{\sup}\mathbb{E}_{P}\left[\ell(\hat{d}_{n},\,d(P))\right]\geq\left(C_{d_{1},d_{2},K_{I}}^{(\ref*{prop:lower.bound})}\right)^{n}\min\left\{ \tau_{\ell}^{-2(d_{2}-d_{1}+1)}n^{-2},1\right\} ^{(d_{2}-d_{1})n},
	\]
	for some constant $C_{d_{1},d_{2},K_{I}}^{(\ref*{prop:lower.bound})}$
	that depends only on $d_{1}$, $d_{2}$, and $K_{I}$. Then since
	$\mathcal{P}_{1}\subset\mathcal{P}_{\tau_{g},\tau_{\ell},K_{I},K_{v},K_{p}}^{d_{1}}$
	and $\mathcal{P}_{2}\subset\mathcal{P}_{\tau_{g},\tau_{\ell},K_{I},K_{v},K_{p}}^{d_{2}}$,
	the minimax rate $R_{n}$ in \eqref{eq:regular.minimax} can be lower bounded by the minimax
	rate $\underset{\hat{d}_{n}}{\inf}\underset{P\in\mathcal{P}_{1}\cup\mathcal{P}_{2}}{\sup}\mathbb{E}_{P}\left[\ell(\hat{d}_{n},\,d(P))\right]$,
	i.e. 
	\[
	\underset{\hat{d}_{n}}{\inf}\underset{P\in\mathcal{P}_{\tau_{g},\tau_{\ell},K_{I},K_{v},K_{p}}^{d_{1}}\cup\mathcal{P}_{\tau_{g},\tau_{\ell},K_{I},K_{v},K_{p}}^{d_{2}}}{\sup}\mathbb{E}_{P}[\ell(\hat{d}_{n},d(P))]\geq\underset{\hat{d}_{n}}{\inf}\underset{P\in\mathcal{P}_{1}\cup\mathcal{P}_{2}}{\sup}\mathbb{E}_{P}[\ell(\hat{d}_{n},d(P))],
	\]
	which completes the proof of showing \eqref{eq:proof.lower.bound}.
\end{proof}

\section{Proofs For Section \ref{sec:multidimension}}
\label{sec:proof.multidimension}

\noindent \textbf{Proposition~\ref{prop:multidimension.maximumrisk}.} \textit{
	Fix $\tau_{g},\,\tau_{\ell}\in(0,\infty]$, $K_{I}\in[1,\infty)$, $K_{v}\in(0,2^{-m}]$, $K_{p}\in[(2K_{I})^{m},\infty)$,  with $\tau_{g}\leq\tau_{\ell}$. Let $\hat{d}_{n}$ be in \eqref{eq:multidimension.estimator}. Then:
	\begin{align}
	& \underset{P\in\mathcal{P}_{\tau_{g},\tau_{\ell},K_{I},K_{v},K_{p}}^{d}}{\sup}\mathbb{E}_{P^{(n)}}\left[\ell\left(\hat{d}_{n},d(P)\right)\right] \nonumber \\
	& \leq 1(d>1)\left(C_{K_{I},K_{p},K_{v},m}^{(\ref*{prop:multidimension.maximumrisk})}\right)^{n}\max\left\{1,\tau_{g}^{-(dm+m-2d)n}\right\}n^{-\frac{1}{d-1}n}, \label{eq:proof.multidimension.maximumrisk}
	\end{align}
	where $C_{K_{I},K_{p},K_{v},m}^{(\ref*{prop:multidimension.maximumrisk})}\in(0,\infty)$ is a constant depending only on $K_{I}$, $K_{p}$, $K_{v}$, and $m$.
}
\begin{proof}[Proof of Proposition~\ref{prop:multidimension.maximumrisk}] Note that for all $P\in\mathcal{P}_{\tau_{g},\tau_{\ell},K_{I},K_{v},K_{p}}^{d}$
and $X_{1},\ldots,X_{n}\sim P$, by Lemma~\ref{lem:upper.lowerdim}, 
\[
\underset{\sigma\in S_{n}}{\min}\left\{ \overset{n-1}{\underset{i=1}{\sum}}\|X_{\sigma(i+1)}-X_{\sigma(i)}\|_{\mathbb{R}^{m}}^{d}\right\}\leq C_{K_{I},K_{v},m}^{(\ref*{lem:upper.lowerdim})}\max\left\{1,\tau_{g}^{d-m}\right\},
\]
hence $\hat{d_{n}}$ in  \eqref{eq:multidimension.estimator} always satisfies 
\begin{equation}
\label{eq:proof.multidimension.underestimate}
\hat{d}_{n}(X)\leq d=d(P).
\end{equation}
Hence when $d = 1$, the risk of $\hat{d}_{n}$ is $0$. When $d > 1$, from \eqref{eq:proof.multidimension.underestimate} and Proposition~\ref{prop:upper.bound}, the risk of $\hat{d}_{n}$ in \eqref{eq:multidimension.estimator} is upper bounded as 
\begin{align*}
& P^{(n)}\left[\hat{d}_{n}(X_{1},\cdots,X_{n})\neq d\right]\\
& =P^{(n)}\Biggl[\max\left\{ k\in[1,m]:\ \underset{\sigma\in S_{n}}{\min}\left\{ \overset{n-1}{\underset{i=1}{\sum}}\|X_{\sigma(i+1)}-X_{\sigma(i)}\|_{\mathbb{R}^{m}}^{k}\right\} \leq C_{K_{I},K_{v},m}^{(\ref*{lem:upper.lowerdim})}\max\left\{1,\tau_{g}^{k-m}\right\}\right\} \\
& \qquad\qquad<d\Biggr](\text{from }\eqref{eq:proof.multidimension.underestimate})\\
& \leq\overset{d-1}{\underset{k=1}{\sum}}P^{(n)}\left[\underset{\sigma\in S_{n}}{\min}\left\{ \overset{n-1}{\underset{i=1}{\sum}}\|X_{\sigma(i+1)}-X_{\sigma(i)}\|_{\mathbb{R}^{m}}^{k}\right\} \leq C_{K_{I},K_{v},m}^{(\ref*{lem:upper.lowerdim})}\max\left\{1,\tau_{g}^{k-m}\right\}\right]\\
& \leq\overset{d-1}{\underset{k=1}{\sum}}\left(C_{K_{I},K_{p},K_{v},m}^{(\ref*{prop:upper.maximumrisk})}\right)^{n}\max\left\{1,\tau_{g}^{-\left(\frac{d}{k}m+m-2d\right)n}\right\}n^{-\left(\frac{d}{k}-1\right)n}\ (\text{Proposition }\ref{prop:upper.bound})\\
& \leq\left(C_{K_{I},K_{p},K_{v},m}^{(\ref*{prop:multidimension.maximumrisk})}\right)^{n}\max\left\{1,\tau_{g}^{-(dm+m-2d)n}\right\}n^{-\frac{1}{d-1}n},
\end{align*}
where $C_{K_{I},K_{p},K_{v},m}^{(\ref*{prop:multidimension.maximumrisk})} =m C_{K_{I},K_{p},K_{v},m}^{(\ref*{prop:upper.maximumrisk})}$  is a constant depending only on $K_{I}$, $K_{p}$, $K_{v}$, and $m$. Therefore, the  risk is upper bounded as in \eqref{eq:proof.multidimension.maximumrisk}, as
\begin{align*}
& \underset{P\in\mathcal{P}_{\tau_{g},\tau_{\ell},K_{I},K_{v},K_{p}}^{d}}{\sup}\mathbb{E}_{P^{(n)}}\left[\ell\left(\hat{d}_{n},d(P)\right)\right] \\
& \leq 1(d>1)\left(C_{K_{I},K_{p},K_{v},m}^{(\ref*{prop:multidimension.maximumrisk})}\right)^{n}\max\left\{1,\tau_{g}^{-(dm+m-2d)n}\right\}n^{-\frac{1}{d-1}n}.
\end{align*}
\end{proof}

\noindent \textbf{Proposition~\ref{prop:multidimension.upper}.} \textit{
	Fix $\tau_{g},\,\tau_{\ell}\in(0,\infty]$, $K_{I}\in[1,\infty)$, $K_{v}\in(0,2^{-m}]$, $K_{p}\in[(2K_{I})^{m},\infty)$, with $\tau_{g}\leq\tau_{\ell}$. Then:
	\begin{equation}
	\label{eq:proof.multidimension.upper}
	\underset{\hat{d}_{n}}{\inf}\underset{P\in\mathcal{P}}{\sup}\mathbb{E}_{P^{(n)}}\left[\ell\left(\hat{d}_{n},d(P)\right)\right]\leq\left(C_{K_{I},K_{p},K_{v},m}^{(\ref*{prop:multidimension.maximumrisk})}\right)^{n}\max\left\{1,\tau_{g}^{-(m^{2}-m)n}\right\}n^{-\frac{1}{m-1}n},
	\end{equation}
	where $C_{K_{I},K_{p},K_{v},m}^{(\ref*{prop:multidimension.maximumrisk})}$ is from Proposition~\ref{prop:multidimension.maximumrisk}.
}
\begin{proof}[Proof of Proposition~\ref{prop:multidimension.upper}]
	Note that \eqref{eq:upper.maximumrisk.bound} still holds when $\mathcal{P}$ is as in \eqref{eq:regular.distribution.multi}. Hence applying Proposition~\ref{prop:multidimension.maximumrisk} to \eqref{eq:upper.maximumrisk.bound} yields
	\begin{align*}
	& \underset{\hat{d}_{n}}{\inf}\underset{P\in\mathcal{P}}{\sup}\mathbb{E}_{P^{(n)}}\left[\ell\left(\hat{d}_{n},d(P)\right)\right]\\
	& \leq\max_{1\leq d\leq n}\left\{ \underset{P\in\mathcal{P}_{\tau_{g},\tau_{\ell},K_{I},K_{v},K_{p}}^{d}}{\sup}\mathbb{E}_{P^{(n)}}\left[\ell\left(\hat{d}_{n},d(P)\right)\right]\right\} \\
	& \leq\left(C_{K_{I},K_{p},K_{v},m}^{(\ref*{prop:multidimension.maximumrisk})}\right)^{n}\max\left\{1,\tau_{g}^{-(m^{2}-m)n}\right\}n^{-\frac{1}{m-1}n}.
	\end{align*}
	Hence the minimax rate $R_{n}$ in \eqref{eq:regular.minimax} is upper bounded as in \eqref{eq:proof.multidimension.upper}.
\end{proof}

\noindent \textbf{Proposition~\ref{prop:multidimension.lower}.} \textit{
	Fix $\tau_{g},\,\tau_{\ell}\in(0,\infty]$, 
	$K_{I}\in[1,\infty)$, $K_{v}\in(0,2^{-m}]$, 
	$K_{p}\in[(2K_{I})^{m},\infty)$, with 
	$\tau_{g}\leq\tau_{\ell}$ and suppose that $\tau_{\ell}<K_{I}$. Then,
	\begin{equation}
	\underset{\hat{d}_{n}}{\inf}\underset{P\in\mathcal{P}}{\sup}\mathbb{E}_{P^{(n)}}[\ell(\hat{d}_{n},d(P))]\geq\left(C_{K_{I}}^{(\ref*{prop:multidimension.lower})}\right)^{n}\min\left\{ \tau_{\ell}^{-4}n^{-2},1\right\} ^{n}
	\end{equation}
	where $C_{K_{I}}^{(\ref*{prop:multidimension.lower})}\in(0,\infty)$ is a constant depending only on $K_{I}$.
}
	
\begin{proof}[Proof of Proposition~\ref{prop:multidimension.lower}]
For any $d_{1}$ and $d_{2}$, from Proposition~\ref{prop:lower.bound},
\begin{align*}
 & \underset{\hat{d}_{n}}{\inf}\underset{P\in\mathcal{P}}{\sup}\mathbb{E}_{P^{(n)}}[\ell(\hat{d}_{n},d(P))]\\
 & \geq\underset{\hat{d}_{n}}{\inf}\underset{P\in\mathcal{P}_{\tau_{g},\tau_{\ell},K_{I},K_{v},K_{p}}^{d_{1}}\cup\mathcal{P}_{\tau_{g},\tau_{\ell},K_{I},K_{v},K_{p}}^{d_{2}}}{\sup}\mathbb{E}_{P^{(n)}}[\ell(\hat{d}_{n},d(P))]\\
 & \geq\left(C_{d_{1},d_{2},K_{I}}^{(\ref*{prop:lower.bound})}\right)^{n}\min\left\{ \tau_{\ell}^{-2(d_{2}-d_{1}+1)}n^{-2},1\right\} ^{(d_{2}-d_{1})n}\\
\end{align*}
Hence by plugging in $d_{1}=1$ and $d_{2}=2$, the minimax rate $R_{n}$ in \eqref{eq:regular.minimax} is lower bounded as in \eqref{eq:proof.multidimension.upper}, as
\[
\underset{\hat{d}_{n}}{\inf}\underset{P\in\mathcal{P}}{\sup}\mathbb{E}_{P^{(n)}}[\ell(\hat{d}_{n},d(P))] \geq\left(C_{K_{I}}^{(\ref*{prop:multidimension.lower})}\right)^{n}\min\left\{ \tau_{\ell}^{-4}n^{-2},1\right\} ^{n}
\]
with $C_{K_{I}}^{(\ref*{prop:multidimension.lower})}=C_{d_{1}=1,d_{2}=2,K_{I}}^{(\ref*{prop:lower.bound})}$.

\end{proof}

\end{document}